    \newtheorem{Lem}{Lemma}[section]
    \newtheorem{Lem-Def}{Lemma-Definition}[section]
    \newtheorem{Prop}[Lem]{Proposition}
      \newtheorem*{thm1}{Theorem 1}      
      \newtheorem*{thm2}{Theorem 2}      
        \newtheorem*{Quest}{Question}
    \newtheorem{Thm}[Lem]{Theorem}  
    \newtheorem{Cor}[Lem]{Corollary}
\theoremstyle{definition}
    \newtheorem{Rem}[Lem]{Remark}
\newcommand{\Sym}{ \text{Sym}^{24}}
\newcommand{\co}{\mathbb{C}}\newcommand{\W}{\mathcal W}
\newcommand{\F}{\mathcal F}
\newcommand{\Z}{\mathcal Z}
\newcommand{\I}{\mathcal I}
\newcommand{\M}{\mathcal M}
\renewcommand{\L}{\mathcal L}
\renewcommand{\O}{\mathcal O}
\newcommand{\C}{\mathcal C}
\newcommand{\V}{\mathcal V}
\newcommand{\X}{\mathcal X}
\newcommand{\col}{\colon}
\newcommand{\Ps}{\mathbb{P}}
\newcommand{\ra}{\rightarrow}
\newcommand{\ol}{\overline}
\newcommand{\ze}{\mathbb{Z}}
\renewcommand{\:}{\colon}
\address{Marco Pacini, Universidade Federal Fluminense, Rua M. S. Braga, Niter\'oi (RJ) Brazil}
\email{pacini@impa.br}
\address{Damiano Testa, Mathematics Institute, University of Warwick, Coventry, CV4 7AL, United Kingdom}
\email{adomani@gmail.com}
\begin{document}

\title[Recovering plane curves of low degree]
{Recovering plane curves of low degree  from their inflection lines and inflection points}

\author[Marco Pacini and Damiano Testa]{Marco Pacini and Damiano Testa}

\begin{abstract}
In this paper we consider  the following problem: is it possible to recover a smooth plane curve of degree $d\ge 3$ from its inflection lines? We answer positively to the posed question for a general smooth plane quartic curve, making the additional assumption that also one inflection point is given, and for any smooth plane cubic curve.
\end{abstract}

\thanks{The first author was partially supported by CNPq, processo 300714/2010-6.}

\thanks{The second author was partially supported by EPSRC grant number EP/F060661/1.}

\maketitle

\section{Introduction}

In the last decade, several reconstruction theorems for plane and canonical curves defined over the field of complex numbers appeared in the literature. We mention the papers \cite{CS1} and \cite{L1}, the first one showing that a general smooth plane quartic curve can be recovered from its 28 bitangents and the second one generalizing the result to any smooth plane quartic. 
 The result is extended in \cite{CS2} to a general smooth canonical curve and in \cite{GS} to a general principally polarized abelian variety  considering theta hyperplanes,  the natural generalization of the bitangents of a plane quartic. Recently, an effective algorithm reconstructing  a canonical curve from its theta hyperplanes has been obtained  in \cite{L2}.

In this paper we consider other distinguished lines associated to a plane curve. Recall that a smooth plane curve of degree $d\ge 3$ has $3d(d-2)$ inflection lines, counted with multiplicity, i.e. lines cutting the curve in a point with multiplicity at least 3, called inflection point of the curve.  It is therefore natural to ask whether or not a plane curve of degree $d$ can be reconstructed from its inflections lines.   Here we  investigate the case of curves of  degree at most 4, i.e.\  plane cubic curves and plane quartic curves.

The properties of the inflections points of plane curves have been widely studied  by the classical geometers. For example, in \cite{H1,H2},  O.\ Hesse studied a pencil,  also known as the \emph{Hesse pencil},  given by the equation    
 $$\lambda_0 (x^3+y^3+z^3) - 3 \lambda_1 xyz = 0$$
 where $[\lambda_0,\lambda_1]\in \Ps^1$ and $x,y,z$ are homogeneous coordinates on $\Ps^2$. 
  The curves appearing in this pencil  share the same inflection points (see also the proof of Theorem~\ref{tu}).  For more details on the Hesse pencil, we refer the reader to \cite{AD}. In~\cite{W}, R.M.\ Winger considered curves of degree six sharing  some inflection lines. More precisely, he introduced  a pencil of plane curves of degree six with 12 common inflection points and 6 common inflection lines.  But, to our knowledge,  the problem of whether or not two distinct plane curves of degree $d$ could also share the whole set of inflection lines has never been considered, even for $d=3$,  and therefore this question is still open.

The interest in reconstructing results for a smooth plane quartic curve consists also in the attempt to give an improved version of the classical Torelli Theorem for non-hyperelliptic curves of genus 3. Indeed, let $C$ be a smooth plane quartic curve and  $J(C)$ its Jacobian, with principal polarization $\Theta(C)$. Let  
$\Theta(C)\ra (\Ps^2)^\vee$ be the Gauss map.  If $C$ can be recovered by a finite set of its dual curve, then only a finite number of points in the image of  
 the Gauss map are necessary to recover $C$. 
We refer to \cite[page~245]{ACGH} for more details on the Torelli map.

 For plane quartic curves, we need to make an additional assumption in the reconstruction result of Theorem \ref{main}, which we can state as follows.
 
\begin{thm1}\label{th1}
The general smooth plane quartic curve defined over the field of complex numbers is uniquely determined by its inflection lines and one inflection point.  In other words,  let $L\subset\Ps^2$ be a line and  $p\in L$ a point. Let  $X_1$ and $X_2$ be smooth plane quartic curves admitting $L$ as inflection line and $p$ as inflection point.  If $X_1$ is sufficiently general, and if $X_1$ and $X_2$ have the same inflection lines, then $X_1=X_2$. 
\end{thm1}

To prove Theorem \ref{th1}, we use the same degeneration technique  developed in  \cite{CS1}.  Indeed, in Section \ref{infle-lim}, we study degenerations of inflection lines when a smooth quartic approach a singular one and in Lemma \ref{nodal-cubic} we show that a nodal curve which is the union of a line and  an irreducible  cubic with a node is determined by limit inflection lines. Then we deduce the result for general smooth curves in Theorem \ref{main},  showing  that a certain morphism is \'etale exactly as in \cite{CS1}. In Lemma \ref{nodal-cubic} we prove that the map associating to a smooth plane quartic its configuration of inflection lines is generically finite onto its image.
Nevertheless,  the following problem remains open

\begin{Quest}
Is it possible to recover a smooth plane quartic curve  only from its inflection lines?
\end{Quest}

For plane cubic curves, we can state our reconstruction result contained in Theorem \ref{n3}.

\begin{thm2}\label{th2}
Let $C \subset \mathbb{P}^2$ be a smooth plane cubic curve over a field $k$ of characteristic different from three and let $T_C \subset (\mathbb{P}^2)^\vee$ be the set of inflection lines of $C$.  There is a unique (geometrically) integral curve $C' \subset (\mathbb{P}^2)^\vee$ such that 
\begin{itemize}
\item
if ${\rm char}(k) \neq 2$, then $C'$ is a sextic with cusps at the points of $T_C$;
\item
if ${\rm char}(k) =2$ and $j(C) \neq 0$, then $C'$ is a cubic containing $T_C$;
\item
if ${\rm char}(k) =2$ and $j(C) = 0$, then $C'$ is a cubic containing $T_C$, with vanishing $j$-invariant.
\end{itemize}
Moreover, the space of cubics in $(\mathbb{P}^2)^\vee$ containing $T_C$ has dimension one if and only if $({\rm char}(k),j(C)) = (2,0)$.  In all cases, the curve $C'$ described above is the dual of the curve $C$.
\end{thm2}

The proof of this result proceeds by reconstructing the dual of the initial plane cubic.  There are two configuration of nine points that are used in the 
argument: the nine inflection points in the plane contaning the cubic and the nine inflection lines in the dual projective plane.  The 
inflection lines are the base locus of a Halphen pencil of plane sextics of genus one.  If the characteristic of the ground field is different from two, then 
we prove that an integral plane sextics with a singularity at one of these points is automatically singular at all the nine points and the singularities are 
all cuspidal.  This cuspidal curve is the dual of the initial plane cubic and the reconstruction follows by projective duality.  In this proof it is important 
that the dual of the plane cubic is a curve of degree six with cuspidal singularities at nine points, and can therefore be identified with a curve in the 
anti-bicanonical linear system of a blow up of the dual projective plane.

\subsection{Notation}\label{Not}

Throughout the paper we  use the following notation and terminology. 
 In Section \ref{infle-lim} and Section \ref{rec-sec}, we work over $\co$. 
A \emph{curve} is a projective, local complete intersection connected and reduced scheme of pure dimension 1. If $C$ is a curve, then $g_C:=1-\chi(\O_C)$ is the \emph{genus} of $C$ and $\omega_C$ is its dualizing sheaf.  Moreover, $C^{sm}$ is the smooth locus of $C$ and $C^{sing}:=C-C^{sm}$. 
If $D$ is a  divisor of a curve $C$, we denote by $\nu_p D$ the multiplicity of $D$ at $p$, for a point 
$p\in C^{sm}$.

A \emph{nodal curve} is a  curve whose singular points are nodes.  A \emph{subcurve} $Z$ of a curve $C$ is a non-empty union of irreducible components of $C$ such that $Z\subsetneq C$. If $Z_1$ and $Z_2$ are 
subcurves of a curve $C$ with no common components and such that 
$Z_1\cap Z_2$ are nodes of $C$, we denote by $\Delta_{Z_1\cap Z_2}$  the Cartier divisor of $Z_1$ or $Z_2$ defined as
 $\Delta_{Z_1\cap Z_2}:=\sum_{p\in Z_1\cap Z_2} p$.  If $Z$ is a subcurve of a curve $C$, we let $Z':=\overline{C-Z}$, and if  $Z\cap Z'$ are nodes of $C$, we let $\Delta_Z:=\Delta_{Z\cap Z'}$. We say that a curve $C$ defined over a field $k$  has  a \emph{cusp} (respectively a \emph{tacnode}) in a point $s\in C$ if the completion of the local ring of $C$ at $s$ is isomorphic to the quotient of the formal power series ring $k[[x,y]]$ by the ideal $(y^2-x^3)$ (respectively $(y^2-x^4)$).

A \emph{family of curves} is a proper and flat morphism $f\col\C\ra B$ whose fibers are curves. 
We denote by $\omega_f$ the relative dualizing sheaf of the family and $C_b:=f^{-1}(b)$, for $b\in B$. A \emph{smoothing} of a curve $C$ is a family $f\col\C\ra B,$ where   $B$ is the spectrum of a discrete valuation ring with closed point $0$ such that the general fiber is smooth and $C_0=f^{-1}(0)=C$.  
A \emph{regular smoothing} of a curve $C$ is a smoothing $\C\ra B$ of $C$ such that $\C$ is smooth
 everywhere except possibly at the points of $C$ which lie on exactly one irreducible component of $C$.   If $\L$ is a line bundle  on a curve $C$, we set $\deg_Z \L:=\deg \L|_Z$, for every subcurve $Z$ of $C$.   
 
  If $C$ is a plane curve, a \emph{smoothing of $C$ to plane curves} is a smoothing $f\: \C\ra B$, where 
  $\C\subset B\times\Ps^2$ and $f$ is the restriction of the projection onto the first factor. 
 The dual curve $C^\vee$ of $C$ is defined as the closure in $({\Ps^2})^\vee$ of the set representing the tangents at the smooth points of $C$.  It is worth to observe that if $C$ is singular, then
  $C^\vee$ is not the flat limit of the duals of smooth curves approaching $C$ (see \cite{K2} for more information on the flat limit of dual curves).  If $F(x,y,z)$ is a homogeneous polynomial in $x,y,z$, then $Z(F)\subseteq \Ps^2$ will denote its zero set.
 
 If $V$ is a variety and $p$ is a point of $V$, we denote by $T_p V$ the tangent space to $V$ at $p$.   
 If $S$ is a scheme, we denote by $S^{red}$ the reduced scheme associated to $S$.

\section{Limits of inflection lines}\label{infle-lim}

Let $C$ be a plane quartic curve.  If $C$ is irreducible, the Pl\"{u}cker formulas for plane curves provide the number of smooth inflection points of $C$.   On the other hand, D.\ Eisenbud and J.\ Harris in~\cite{EH1} consider the problem of determining what are the limits of Weierstrass points in families of curves degenerating to stable curves. 
 So far,  there are answers  for curves of compact type, in~\cite{EH1},  and for curves with at most two components, in~\cite{La, WL, EM}.  But  how do the \emph{inflection lines} degenerate in a smoothing of $C$? For example,   how many inflection lines degenerate to a line of the tangent cone at a node of $C$, or to a line contained in $C$? In \cite{K1}, the author addresses the problem for pencils of type 
 $\{G^n+tF\}_{t\in\mathbb{A}^1}$, where $n\ge 2$ and $F$ and $G$ are homogeneous polynomials with   $\deg F=n\deg G$.  
In this section, we will give the complete list of  limit inflection lines and their multiplicities for certain plane quartic curves.

\smallskip

Let $f\col\C\ra B$ be a regular smoothing of a curve $C$ whose irreducible components intersect at nodes (notice that $C$ need not be a nodal curve in the sense of Subsection \ref{Not}, but it a nodal curve  in the sense of \cite{EM}).  Let $Z_1,\dots,Z_m$ be the irreducible components of $C$ and assume that $g_C>0$.  Following \cite[Definition 2.6]{EM},  let $W^*$ be the Weierstrass subscheme of the generic fiber of $f$ and $W_f$  be \emph{the $f$-Weierstrass scheme}, i.e.  the closure of $W^*$ in $\C$. Consider the natural finite morphism $\psi_f\col W_f\ra B$. Set $W_{f,0}:=W_f\cap C$ and let $[W_{f,0}]$ be the associated cycle.

For every sheaf $\M=\omega_f(\sum_{j=1}^m t_j Z_j)$ and for each $i\in\{1,\dots,m\}$, define $\rho_{\M,i}$ to be the natural map
$$
\rho_{\M,i}\col f_*\M|_0\ra H^0(\M|_{Z_i}).
$$
By \cite[Theorem 2.2]{EM}, there is a unique $m$-tuple of sheaves $(\L_1,\dots,\L_m)$ on $\C$,  
where for each $i\in\{1,\dots,m\}$ we have $\L_i=\omega_f(\sum_{j=1}^m t_j Z_j)$, for $t_1,\ldots,t_m \in \ze$, and such that the following conditions hold: 
\begin{itemize}
\item[(i)]
the natural map $\rho_{\L_i,i}\col f_*\L_i|_0\ra H^0(\L_i|_{Z_i})$ is injective;
\item[(ii)]
the natural map $\rho_{\L_i,j}\col f_*\L_i|_0\ra H^0(\L_i|_{Z_j})$ is not zero, if $j\ne i$.
\end{itemize}

Fix $i \in \{1,\ldots, m\}$. 
The sheaf $\L_i$ defined above is called the \emph{canonical sheaf of $f$ with focus on $Z_i$}. 
Notice that a sheaf $\M=\omega_f(\sum_{j=1}^m t_j Z_j)$, for $t_j\in\ze$,  is the canonical sheaf of $f$ with focus on $Z_i$ if, whenever $j\ne i$,  the following conditions hold
\begin{equation}\label{focus-cond}
h^0(\M|_C)=g, \quad h^0(\M|_{Z'_i}(-\Delta_{Z'_i}))=0 \quad \text{and} \quad h^0(\M|_{Z'_j}(-\Delta_{Z'_j}))<g .
\end{equation}
Indeed, for every $j\in\{1,\dots,m\}$, consider the natural exact sequence 
\begin{equation}\label{ker}
0\ra\M|_{Z'_j}(-\Delta_{Z'_j})\ra\M|_C\ra \M|_{Z_j}\ra 0
\end{equation}
In general, $f_*\M|_0$ is a subspace of dimension $g$ of $H^0(\M|_C)$. 
If the conditions in (\ref{focus-cond}) hold, then  $f_*\M|_0=H^0(\M|_C)$,  
$Ker(\rho_{\M,i})=H^0(\M|_{Z'_i}(-\Delta_{Z'_i}))=0$ and $\dim(Ker(\rho_{\M,j}))=h^0(\M|_{Z'_j}(-\Delta_{Z'_j}))<g$ if $j\ne i$, from the long exact sequence in cohomology associated to (\ref{ker}). Thus, $\M$   is the canonical sheaf of $f$ with focus on $Z_i$.
  
If $\L_i$ is the canonical sheaf of $f$ with focus on $Z_i$, the \emph{limit canonical aspect of $f$ with focus on $Z_i$} is 
  $(Im(\rho_{\L_i,i}),\L_i|_{Z_i})$. 
  Let   $t_{i,1},\dots,t_{i,m}$ be the unique integers such that $t_{i,i}=0$ and $\L_i\simeq\omega_f(\sum_{j=1}^m t_{i,j} Z_j)$.  For every $p\in Z_i\cap Z_j$ such that $j\ne i$, we define the  \emph{correction number for $\L_i$ at $p$} as $c_{Z_i}(p):=t_{i,j}$.

If  $R_{Z_i}$ is the ramification divisor of the limit canonical aspect of $f$ with focus on $Z_i$, for every 
$i\in\{1,\dots,m\}$, then by \cite[Theorem 2.8]{EM} we have
\begin{equation}\label{cycle-form}
[W_{f,0}]=\sum_{i=1}^m R_{Z_i}+\sum_{p\in C^{\text{sing}}} c_p p
\end{equation}
where $c_p:=g(g-1-c_{Z_i}(p)-c_{Z_j}(p))$, for $p\in Z_i\cap Z_j$, $i\ne j$, $i,j\in\{1,\dots,m\}$. 

If $\L$ is a line bundle on a smooth curve of genus $g$ and $R_V$ is the ramification divisor of a linear system $V\subset H^0(\L)$ of dimension $r+1$,  recall that by the Pl\"{u}cker formula we have
\begin{equation}\label{plu}
\deg R_V=r(r+1)(g-1)+(r+1)\deg \L .
\end{equation}

\begin{Lem}\label{twist-canon}
 Let $f\:\C\ra B$ be a regular smoothing of a curve $C$ and 
  $Z$ be an irreducible component of $C$ such that $Z\cap Z'$ are nodes of $C$.  Then $h^0(\omega_f(Z)|_C)=g_C$. Moreover, if $\#\Delta_Z=1$ and  $g_Z\ge 1$, then $h^0(\omega_f(2Z)|_C)=g_C$. 
\end{Lem}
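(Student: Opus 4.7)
My plan is to compute $h^0(\M|_C)$ for $\M = \omega_f(tZ)$, $t \in \{1,2\}$, by restricting $\M$ to the two subcurves $Z$ and $Z'$ and chasing the short exact sequence
$$0 \to \M|_{Z'}(-\Delta_{Z'}) \to \M|_C \to \M|_Z \to 0$$
on $C$. To identify the restrictions, I would combine the adjunction-type relation $\omega_C|_Z = \omega_Z(\Delta_Z)$ (computed locally at each node of $C$) with the fiber identity $\O_\C(Z+Z')|_C = \O_C$, which yields $\O_\C(Z)|_Z = \O_Z(-\Delta_Z)$ and $\O_\C(Z)|_{Z'} = \O_{Z'}(\Delta_{Z'})$. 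In case (i) this gives $\omega_f(Z)|_Z = \omega_Z$ and $\omega_f(Z)|_{Z'}(-\Delta_{Z'}) = \omega_{Z'}(\Delta_{Z'})$; in case (ii) it gives $\omega_f(2Z)|_Z = \omega_Z(-\Delta_Z)$ and $\omega_f(2Z)|_{Z'}(-\Delta_{Z'}) = \omega_{Z'}(2\Delta_{Z'})$.

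I would then show that the connecting map in cohomology vanishes by reducing, via Serre duality on $Z'$, the obstruction $h^1(\omega_{Z'}(m\Delta_{Z'}))$ to $h^0(\O_{Z'}(-m\Delta_{Z'}))$ for $m \in \{1,2\}$. Since $C$ is connected and $Z$ is a proper subcurve, each connected component of $Z'$ meets $Z$ in at least one node of $\Delta_{Z'}$, so this $h^0$ vanishes. Consequently
$$h^0(\M|_C) = h^0(\omega_{Z'}(m\Delta_{Z'})) + h^0(\M|_Z),$$
and Riemann--Roch on $Z'$ evaluates the first summand as $m\,\#\Delta_{Z'} - \chi(\O_{Z'})$.

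For case (i), combining with $h^0(\omega_Z) = g_Z$ and the identity $g_C = g_Z + \#\Delta_Z - \chi(\O_{Z'})$ (which comes from the partial normalization sequence $0 \to \O_C \to \O_Z \oplus \O_{Z'} \to \O_{Z \cap Z'} \to 0$) yields exactly $g_C$. For case (ii), the hypothesis $\#\Delta_Z = 1$ forces $Z'$ to be connected, since any component of $Z'$ disjoint from $Z$ would disconnect $C$ and any decomposition of $Z'$ into two parts each meeting $Z$ would force both to pass through the unique node; thus $\chi(\O_{Z'}) = 1 - g_{Z'}$ and $h^0(\omega_{Z'}(2\Delta_{Z'})) = g_{Z'}+1$. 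The remaining summand is handled by Riemann--Roch on the smooth curve $Z$: since $g_Z \geq 1$, one has $h^0(\O_Z(\Delta_Z)) = 1$, hence $h^1(\omega_Z(-\Delta_Z)) = 1$ and $h^0(\omega_Z(-\Delta_Z)) = g_Z - 1$. The sum is then $g_Z + g_{Z'} = g_C$. The essential role of the assumption $g_Z \geq 1$ is the main subtlety: for rational $Z$ the value $h^0(\O_Z(\Delta_Z))$ jumps to $2$, inflating the count and causing the formula to fail. Apart from this, the argument is bookkeeping with Riemann--Roch on the two pieces.
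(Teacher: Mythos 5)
Your computation is correct, and it is essentially the Serre dual of the paper's argument. The paper first applies Riemann--Roch and Serre duality on $C$ itself to reduce everything to showing $h^0(\O_\C(-iZ)|_C)=1$, and then runs the restriction sequence for that negative twist, where the $Z'$-piece visibly has no sections and the $Z$-piece contributes $h^0(\O_Z)=1$ (resp.\ $h^0(\O_Z(p))=1$). You instead keep the sheaf $\omega_f(iZ)|_C$, kill the $H^1$ of the kernel term by Serre duality on $Z'$, and then assemble the answer from Riemann--Roch on the two pieces together with the additivity $g_C=g_Z+\#\Delta_Z-\chi(\O_{Z'})$ coming from the partial normalization sequence. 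The two routes rest on exactly the same two inputs --- every connected component of $Z'$ meets $Z$ because $C$ is connected, and $h^0(\O_Z(\Delta_Z))=1$ when $\#\Delta_Z=1$ and $g_Z\ge 1$ --- so the difference is purely organizational: the paper's early dualization makes all the remaining cohomology groups either $0$ or $1$ and avoids your Riemann--Roch bookkeeping, while your version has the mild advantage of exhibiting the individual restrictions $\omega_f(Z)|_Z\simeq\omega_Z$ and $\omega_f(2Z)|_Z\simeq\omega_Z(-\Delta_Z)$, which is the form in which the lemma is actually used later. One cosmetic point: the lemma does not assume $Z$ smooth, only that $Z\cap Z'$ consists of nodes of $C$, so $p$ is merely a smooth point of the irreducible curve $Z$; your appeal to ``Riemann--Roch on the smooth curve $Z$'' should be replaced by the remark that $h^0(\O_Z(p))=2$ would give a degree-one map $Z\to\Ps^1$ and force $g_Z=0$, which is how the paper justifies the dual statement. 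You also correctly isolate the role of $g_Z\ge 1$, which is the one place where the count would genuinely fail.
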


\begin{proof}
Fix $i\in \{1,2\}$. By Riemann-Roch,  $h^0(\omega_f(iZ)|_C)=g_C-1+h^0(\O_\C(-iZ)|_C)$.  Hence we only  need to   show that $h^0(\O_\C(-iZ)|_C)=1$. 
Consider the natural exact sequence 
$$0\ra\O_\C(-iZ)|_Z(-\Delta_Z)\ra \O_\C(-iZ)|_C\ra \O_\C(-iZ)|_{Z'}\ra 0.
$$
Now, $h^0( \O_\C(-iZ)|_{Z'})=h^0(\O_{Z'}(-i\Delta_{Z'}))=0$. Thus, from the long exact sequence in cohomology, $h^0( \O_\C(-iZ)|_C)=h^0( \O_\C(-iZ)|_Z(-\Delta_Z))$. If $i=1$, then  $h^0( \O_\C(-iZ)|_Z(-\Delta_Z))=h^0(\O_Z)=1$, because $Z$ is connected.  If $i=2$,  $\Delta_Z=\{p\}$ and  $g_Z\ge 1$, then  
$h^0( \O_\C(-iZ)|_Z(-\Delta_Z))=h^0(\O_Z(p))=1$.
\end{proof}

Although some parts of the following three lemmas are known to the specialists, we will provide complete proofs for the reader's convenience.

 \begin{Lem}\label{nod}
Let $C$ be an irreducible plane quartic with a node $p$ and $Y$ be the normalization of $C$ at $p$. Let $g\col\C\ra B$ be a smoothing of 
$C$ to plane quartics. Assume that $\C$ has a singularity of $A_1$-type at $p$. Consider the blowup  
$\pi\:\X\ra \C$  of $\C$ at $p$, and set $f:=g\circ \pi$ and $X:=f^{-1}(0)=Y\cup E$, where   
$E\simeq\Ps^1$ is the exceptional component of $\pi$.   
 If  $R_Y$ is the ramification divisor of the limit canonical aspect of $f$ with focus on $Y$, then  
$[W_{f,0}]=R_Y+ 3p_1+3 p_2$, where $\{p_1,p_2\}:=E\cap Y$
 and $\nu_{p_i} R_Y\in\{0,1\}$, for $i\in \{1,2\}$.
  \end{Lem}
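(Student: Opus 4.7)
The plan is to apply formula~(\ref{cycle-form}) to the reducible curve $X=Y\cup E$ of arithmetic genus $g=g_C=3$ with nodes $p_1,p_2$, by identifying the canonical sheaves of $f$ with focus on $Y$ and on $E$ and then computing the corresponding ramification divisors. I claim that $\L_Y=\omega_f$ and $\L_E=\omega_f(Y)$. Using $(Y+E)|_X\sim 0$ one finds $\O_\X(Y)|_Y=\O_Y(-p_1-p_2)$ and $\O_\X(Y)|_E=\O_E(2)$, and therefore
$$\L_Y|_Y=\omega_Y(p_1+p_2),\quad \L_Y|_E=\O_E,\quad \L_E|_Y=\omega_Y,\quad \L_E|_E=\O_E(2).$$
To verify that these are the canonical sheaves with the required foci, I would check the three sufficient conditions in~(\ref{focus-cond}): the equality $h^0(\L_Y|_X)=g$ holds because $\L_Y|_X=\omega_X$, and $h^0(\L_E|_X)=g$ by Lemma~\ref{twist-canon} applied to $Z=Y$; the vanishing conditions reduce to $h^0(\O_E(-2))=0$ and $h^0(\omega_Y(-p_1-p_2))=0$ (the latter holds whenever $p_1+p_2$ is not a canonical divisor on the genus $2$ curve $Y$); the auxiliary third condition is verified similarly. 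This gives correction numbers $c_Y(p_i)=0$ and $c_E(p_i)=1$, so $c_{p_i}=3(3-1-0-1)=3$, whence $[W_{f,0}]=R_Y+R_E+3p_1+3p_2$.

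To see that $R_E=0$, note that the kernel of $\rho_{\L_E,E}$ is $H^0(\L_E|_Y(-\Delta_Y))=H^0(\omega_Y(-p_1-p_2))=0$, so this map is an isomorphism between two $3$-dimensional spaces; the limit canonical aspect on $E$ is the complete linear system $|\O_E(2)|$, and the Pl\"ucker formula~(\ref{plu}) gives $\deg R_E=2\cdot 3\cdot(-1)+3\cdot 2=0$. The same argument, applied now to $\rho_{\L_Y,Y}$, whose kernel is $H^0(\O_E(-2))=0$, shows that the limit canonical aspect on $Y$ is the complete linear system $|\omega_Y(p_1+p_2)|$ on a genus $2$ curve.

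Finally, to bound $\nu_{p_i}R_Y$, I would compute the vanishing sequence of $|\omega_Y(p_1+p_2)|$ at $p_1$ by evaluating $h^0(\omega_Y(p_1+p_2)-kp_1)$ for $k=0,1,2,3,4$. By Riemann--Roch on $Y$ and the fact that $p_1\not\sim p_2$ in $\Pic Y$ (since $Y$ has positive genus), these dimensions equal $3,2,1$ for $k=0,1,2$; and then $h^0(\omega_Y(p_2-2p_1))$ equals $0$ when $p_1$ is not a Weierstrass point of $Y$ (giving vanishing sequence $(0,1,2)$ and $\nu_{p_1}R_Y=0$) and $1$ when $p_1$ is a Weierstrass point of $Y$ (in which case $\omega_Y\sim 2p_1$ forces $h^0(\omega_Y(p_2-3p_1))=0$, giving sequence $(0,1,3)$ and $\nu_{p_1}R_Y=1$). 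The analogous analysis at $p_2$ completes the argument.

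The main obstacle will be the verification of the canonical sheaf with focus on $E$ in the special case when $p_1+p_2$ is a canonical divisor on $Y$ (that is, when $p_1,p_2$ are conjugate under the hyperelliptic involution of the genus $2$ curve $Y$); there condition (b) of~(\ref{focus-cond}) fails for $\omega_f(Y)$, and one must instead return to the defining conditions (i), (ii) inherited from \cite{EM} to identify the correct $\L_E$ and to check that the correction number $c_E(p_i)=1$ is unchanged, so that formula~(\ref{cycle-form}) still yields $c_{p_i}=3$.
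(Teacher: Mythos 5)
Your strategy is the same as the paper's: identify $\L_Y=\omega_f$ and $\L_E=\omega_f(Y)$, read off the correction numbers, kill $R_E$ with the Pl\"ucker formula (\ref{plu}), and compute the vanishing sequence of $|\omega_Y(p_1+p_2)|$ at $p_i$; all of those computations are correct. The problem is precisely the point you flag at the end and leave open. The verification that $\omega_f(Y)$ is the canonical sheaf with focus on $E$ requires $h^0(\omega_Y(-p_1-p_2))=0$, which you only establish when $p_1+p_2$ is not a canonical divisor on the genus-two curve $Y$. This is a genuine gap, and your proposed fallback cannot close it: the correction number $c_E(p_i)$ is \emph{by definition} the coefficient of $Y$ in the twist $\L_E=\omega_f(t\,Y)$, so if $\omega_f(Y)$ fails condition (\ref{focus-cond}) the unique canonical sheaf with focus on $E$ is a different twist, $c_E(p_i)\neq 1$, and formula (\ref{cycle-form}) then yields $c_{p_i}\neq 3$ --- i.e.\ in that hypothetical case the conclusion of the lemma would actually be false, not merely harder to verify. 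The case must be excluded, not accommodated.

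The paper excludes it by a geometric argument for which you already have all the ingredients. Since plane quartics are canonical curves, $\pi^*\O_{\C}(1)\simeq\pi^*\omega_g\simeq\omega_f$, so $\pi|_Y\colon Y\to C\subset\Ps^2$ is induced by the complete linear system $|\omega_f|_Y|=|\omega_Y(p_1+p_2)|$ (complete because $h^0(\omega_Y(p_1+p_2))=3$). If $\omega_Y\simeq\O_Y(p_1+p_2)$, this is the bicanonical system of a genus-two curve, which factors through the hyperelliptic degree-two map $Y\to\Ps^1$; the image of $\pi|_Y$ would then be a conic, contradicting $\pi|_Y(Y)=C$ being a quartic. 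Hence $h^0(\omega_Y(-p_1-p_2))=0$ always, your identification $\L_E=\omega_f(Y)$ is unconditional, and the rest of your argument goes through; with this addition your proof coincides with the paper's.
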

 
 \begin{proof}
First of all, $\omega_f$ is the canonical sheaf of $f$ with focus on $Y$, because we have $h^0(\omega_f|_X)=3$, $h^0(\omega_f|_{Y'}(-\Delta_{Y'}))=h^0(\O_{\Ps^1}(-p_1-p_2))=0$, $h^0(\omega_f|_{E'}(-\Delta_{E'}))=h^0(\omega_Y)=2$, and hence  the conditions listed in (\ref{focus-cond}) hold.   
In particular, the correction number for $\omega_f$  at $p_i$ is $c_Y(p_i)=0$, for $i\in\{1,2\}$. 

 The morphism $\pi$ contracts $E$, hence $C=\pi|_Y(Y)$. Moreover, $\pi^*(\O_{\C}(1))\simeq\pi^*\omega_g\simeq\omega_f$, because  $g\:\C\ra B$ is a family of canonical curves. In this way,  $\pi|_Y$ is induced by a projective space of dimension two contained in the complete linear system $|\omega_f\otimes\O_Y|=|\omega_Y(p_1+p_2)|$. But $h^0(\omega_Y(p_1+p_2))=3$, by Riemann-Roch, hence $\pi|_Y$ is induced by 
 $|\omega_Y(p_1+p_2)|$.
 
Set $\L:=\omega_Y(-p_1-p_2)$. 
  We claim that $h^0(\L)=0$. Indeed,  by contradiction  assume that   $h^0(\L)\ne 0$. Then $\omega_Y\simeq\O_Y(p_1+p_2)$, because $Y$ is irreducible and $\L$ has degree zero. Thus,  $h^0(\O_Y(p_1+p_2))=2$ and $\omega_Y(p_1+p_2)\simeq\O_Y(p_1+p_2)^{\otimes 2}$. It follows that  $\pi|_Y$ factors via a degree 2 morphism $Y\ra \Ps^1$, and hence $C=\pi|_Y(Y)$ is not a plane quartic, which is a contradiction. 

If we set $\L_E:=\omega_f(Y)$, then the canonical sheaf of $f$ with focus on $E$ is $\L_E$, because  $h^0(\L_E|_X)=3$, by Lemma \ref{twist-canon},  
$h^0(\L_E|_{E'}(-\Delta_{E'}))=h^0(\omega_Y(-p_1-p_2))=0$, by the above claim, and $h^0(\L_E|_{Y'}(-\Delta_{Y'}))=h^0(\O_E)=1$.  In particular, the correction number for $\L_E$  at $p_i$ is $c_E(p_i)=1$, for $i\in\{1,2\}$.

Recall that $R_Y$ is the ramification divisor of $(V_Y,\omega_f|_Y)$, where $V_Y$ is the vector space $V_Y=\text{Im}(H^0(\omega_f|_X)\hookrightarrow H^0(\omega_f|_Y))$. 
We have 
$\omega_f|_Y\simeq \omega_Y(p_1+p_2)$ and $\dim V_Y=h^0(\omega_Y(p_1+p_2))=3$, hence $V_Y=H^0(\omega_Y(p_1+p_2))$. Fix $i\in\{1,2\}$.
 Notice that $V_Y(-p_i)=H^0(\omega_Y(p_{3-i}))=H^0(\omega_Y)\simeq \co^2$. It follows that 
  $\dim V_Y(-2p_i)=h^0(\omega_Y(-p_i))=1$, and hence 
 $\nu_{p_i}(R_Y)=\dim V_Y(-3p_i)\in \{0,1\}$. Moreover,  by ~\eqref{plu},  the ramification divisor of the limit canonical aspect of $f$ with focus on $E$ is $R_E=0$, hence  by ~\eqref{cycle-form} we get
$[W_{f,0}]=R_Y+3(2-c_E(p_1)) p_1+3(2-c_E(p_2))p_2=R_Y+3(p_1+p_2)$.
\end{proof}

\begin{Lem}\label{custac}
Let $C$ be an irreducible plane quartic with a cusp $p$ (respectively a tacnode $p$) and set $t:=8$ (respectively t:=12). Let $f\:\C\ra B$ be a smoothing of $C$ to plane quartics, with $\C$ smooth. 
Then there exists a  finite base change $B'\ra B$ totally ramified over  $0\in B$, such that 
we have $[W_{f',0}]=\sum_{i=1}^{24-t}q_i+tp$, where $p\not\in\{q_1,\dots,q_{24-t}\}$ and $f'\col \C\times_B B'\ra B'$ is the second projection morphism.
\end{Lem}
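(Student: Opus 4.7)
My plan is to compute $[W_{f',0}]$ by identifying the relative inflection scheme with the intersection of $\C$ with the relative Hessian and performing a local calculation at $p$. Since $\C\subset B\times\Ps^2$ is a smooth surface cut out by a relative quartic polynomial $F$ in the projective coordinates with coefficients in $\O_B$, its Hessian $H(F)$ is homogeneous of degree six, and the restriction $Z(H(F))\cap\C$ is a Cartier divisor $\mathcal H_f$ on the smooth surface $\C$.  On each smooth fiber $C_b$ the $0$-cycle $\mathcal H_f\cdot C_b$ is the classical inflection divisor of $C_b$, of degree $3\cdot 4\cdot 2=24$; in particular $\mathcal H_f$ agrees with the closure $W_f$ of $W^*$ on $\C$ up to components supported on $C$, and since $H(F_0)\not\equiv 0\pmod{F_0}$ for an irreducible quartic with isolated singularities, no such vertical component appears.

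Next I compute the local intersection multiplicity at $p$.  Choosing local coordinates $(x,y)$ on the smooth germ $(\C,p)$ in which $C$ is the vanishing of a local equation $F_0(x,y)$ in its analytic normal form, the intersection number of the Cartier divisors $\mathcal H_f$ and $C$ at $p$ is
\[
I_p(\mathcal H_f,C)\;=\;\dim_{\co}\co[[x,y]]\big/\bigl(F_0,\,H(F_0)\bigr),
\]
since the higher-order corrections to $H(F)|_\C$ vanish modulo $F_0$.  For a cusp I use the normal form $F_0=y^2-x^3$, compute the Hessian, and verify $H(F_0)\equiv c\,xy^2\pmod{F_0}$ with $c\neq 0$, yielding $\dim\co[[x,y]]/(y^2-x^3,xy^2)=8$.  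For a tacnode I use $F_0=y^2-x^4$ and similarly obtain $H(F_0)\equiv c\,x^2y^2\pmod{F_0}$ with $c\neq 0$, yielding $\dim\co[[x,y]]/(y^2-x^4,x^2y^2)=12$.  Hence $[\mathcal H_f\cdot C]$ has multiplicity exactly $t$ at $p$, and since its total degree equals $24$, the residual degree $24-t$ is supported on $C\setminus\{p\}$.

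The base change $B'\to B$, totally ramified at $0$, is chosen of sufficient degree so that the pullback of $W^*\to B^\circ$ to $B'^\circ$ splits as a disjoint union of $24$ sections of $\C'^\circ\to B'^\circ$.  By properness each section extends uniquely across the special fiber of $\C'\to B'$, and the cycle $[W_{f',0}]$ equals the sum of the $24$ limit values; since base change of a finite flat morphism preserves fiber lengths, the multiplicity at $p$ is unchanged, so that exactly $t$ of these limits coincide with $p$ and the remaining $24-t$ give the points $q_1,\dots,q_{24-t}\in C\setminus\{p\}$.

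The main subtlety lies in the local computation, and specifically in the assertion that the intersection multiplicity $I_p(F_0,H(F_0))$ is an analytic invariant of the singularity at $p$, so that reduction to a normal form is legitimate for any irreducible plane quartic with a cusp or tacnode at $p$.  This is a classical consequence of the Pl\"ucker formulas for plane curves, which attribute a loss of exactly $8$ (respectively $12$) inflections to each cusp (respectively tacnode) of an irreducible plane curve, regardless of the global equation.
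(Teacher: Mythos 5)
Your strategy --- identifying $W_f$ with the Cartier divisor $Z(H(F))\cap\C$ on the smooth surface $\C$ and computing the local length of its intersection with the central fibre at $p$ --- is genuinely different from the paper's proof, which first performs a base change and a modification replacing the cusp (resp.\ tacnode) by an elliptic tail (resp.\ bridge) and then computes ramification of limit canonical systems in the sense of Esteves--Medeiros. Your route is more elementary, and its global skeleton is sound: since the Hessian of an irreducible quartic does not vanish identically on the curve, $Z(H(F))\cap\C$ has no vertical component, hence is flat over $B$ and coincides with the closure of $W^*$; the cycle $[W_{f,0}]$ is then computed by local intersection numbers, and these are unaffected by a totally ramified base change.

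The gap is in the justification of the local computation. The multiplicity $I_p\bigl(F_0,H(F_0)\bigr)$ is \emph{not} an analytic invariant of the germ of $C$ at $p$: modulo $F$ the Hessian reduces (in an affine chart) to $f_{xx}f_y^2-2f_{xy}f_xf_y+f_{yy}f_x^2$, which records the contact of the branches of $C$ with their tangent lines --- a projective datum that is destroyed by non-linear analytic changes of coordinates. The simplest illustration is a smooth point, where this multiplicity is $0$ or $\geq 1$ according to whether $p$ is an inflection, although all smooth germs are analytically equivalent; more to the point, an $A_3$-singularity one of whose branches meets the common tangent with order $\geq 3$ is still analytically $y^2-x^4$, yet its Hessian multiplicity exceeds $12$. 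For the same reason the closing appeal to a Pl\"ucker-type statement valid ``regardless of the global equation'' is not correct for tacnodes in general. The computation is nevertheless correct for quartics, but for a reason you must supply: by B\'ezout, the common tangent at a tacnode of a quartic meets each branch with order exactly $2$ (otherwise it would meet the quartic with multiplicity $\geq 5$ at $p$), and the cuspidal branch of an $A_2$-singularity automatically has vanishing sequence $(0,2,3)$ for the system of lines; once these contact orders are pinned down, the local lengths $8$ and $12$ follow, either from the decomposition of the Hessian multiplicity as $6\delta_p$ plus the inflectionary weights of the branches, or from a direct computation with a sufficiently general local equation rather than the abstract normal form.
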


\begin{proof}
As explained in \cite[Section 3C]{HM}, there is a finite base change $B'\ra B$ totally ramified over $0\in B$ and a family of curves $g\col\X\ra B'$ such that, if  $f'\col \C':=\C\times_B B'\ra B'$ is the second projection morphism, then  $\X$ and $\C'$ are $B'$-isomorphic away from the central fibers and $X:=g^{-1}(0)=Y\cup E$, where $Y$ is the normalization of $C$ at $p$ and $E$ is a smooth connected curve of genus $g_E=1$  intersecting transversally $Y$ at the points over $p\in C$. In particular, $\#(Y\cap E)=1$, if $p$ is a cusp (respectively  $\#(Y\cap E)=2$, if $p$ is a tacnode).  Moreover, it follows from 
\cite[Lemma 5.1]{P} that, if $B'\ra B$ has degree 6 (respectively 4), then there exists  a family $g\col \X\ra B'$  as stated, with $\X$ is smooth along $Y\cap E$ and with  a $B'$-morphism $\pi\:\X\ra\C'$ which is an isomorphism away from the central fibers and
 contracting $E$ to $p$.
 In particular, it follows that $\pi|_Y(Y)=C$.
 We are done if we show that 
$[W_{g,0}]=\sum_{i=1}^{24-t} q_i+\sum_{i=1}^t q'_i$, 
where $\{q_1,\dots,q_{24-t}\}\subset Y-E$ and $\{q'_1,\dots,q'_t\}\subset E$.  Actually we will prove that
 $\{q'_1,\dots,q'_t\}\subset E-Y$.

Set $\L_Y:=\omega_g(E)$. We claim that $\L_Y$ is the canonical sheaf of $g$ with focus on $Y$. Indeed, $h^0(\L_Y|_X)=3$, by Lemma \ref{twist-canon}, $h^0(\L_Y|_{Y'}(-\Delta_{Y'}))=h^0(\O_E(-\Delta_E))=0$, $h^0(\L_Y|_{E'}(-\Delta_{E'}))=h^0(\omega_Y(\Delta_Y))=2$, by Riemann-Roch. In particular,  for every $r\in Y\cap E$, the correction number for $\L_Y$  at $r$ is $c_Y(r)=1$.

Assume that $p$ is a tacnode and set 
$\{r_1,r_2\}:=Y\cap E$.  Arguing as before, $\L_E:=\omega_g(Y)$ is the canonical sheaf of $g$ with focus on $E$.  In particular,  the correction number for $\L_E$  at $r_i$ is $c_E(r_i)=1$, for $i\in\{1,2\}$.   Recall that $R_Y$ is   the ramification divisor of  $(V_Y,\L_Y|_Y)$, where $V_Y$ is the vector space $V_Y=\text{Im}(\rho_Y\:H^0(\L_Y|_X)\hookrightarrow H^0(\L_Y|_Y))$, $\rho_Y(s)=s|_{Y}$.   
Assume that  $s'(r_i)=0$, for $i\in\{1,2\}$, where $s'=\rho_Y(s)\in V_Y$. Then  $s|_E=0$, because $\L_Y|_E\simeq\O_E$ and hence
\begin{equation}\label{VYbis}
V_Y(-r_i)\subseteq \rho_Y(H^0(\L_Y|_X(-r_1-r_2)))=H^0(\O_Y(r_1+r_2))=\co^2.
\end{equation}
The other inclusion in~\eqref{VYbis} is clear, hence 
$\dim V_Y(-r_i)=2$, 
$\dim V_Y(-2r_i)=h^0(\O_Y(r_{3-i}))=1$ and   
$\dim V_Y(-3r_i)=h^0(\O_Y(r_{3-i}-r_i))=0$, because $g_Y=1$. In this way, 
$\nu_{r_i}R_Y=0$, for $i\in\{1,2\}$.  By (\ref{plu}), we have $\deg R_Y=12$,  hence 
$R_Y=\sum_{i=1}^{12} q_i$, where $\{q_1,\dots,q_{12}\}\subset  Y-E$.   Arguing similarly, 
if $R_E$ is the ramification divisor of the limit canonical aspect of $g$ with focus on $E$,  we have $R_E=\sum_{i=1}^{12} q'_i$, where $\{q'_1,\dots,q'_{12}\}\subset E-Y$.   It follows from (\ref{cycle-form}) that  $[W_{g,0}]=R_Y+R_E$, and hence we are done if $p$ is a tacnode.

From now on, assume that $p$ is a cusp and set $r:=Y\cap E$.   Recall that  $C=\pi|_Y(Y)$. Moreover, we have $\pi^*(\O_{\C'}(1))\simeq\pi^*\omega_{f'}$, because  $f'\:\C'\ra B'$ is a family of canonical curves, hence $\pi^*(\O_{\C'}(1)) \simeq \omega_g\otimes\O_\X(D)|_X$, for some Cartier divisor $D$ of $\X$ of type $D=aY+bE$, for $a,b\in\ze$. But $\deg_{E} \pi^*(\O_{\C'}(1))=0$, because $\pi$ contracts $E$, and $\deg_E\omega_g=1$, hence necessarily $\O_\X(D)|_X\simeq\O_\X(E)|_X$.  In this way,  $\pi|_Y$ is induced by a projective space of dimension two contained in $|\omega_g(E)\otimes\O_Y|=|\omega_Y(2r)|$. But $h^0(\omega_Y(2r))=3$, by Riemann-Roch, hence $\pi|_Y$ is induced by 
 $|\omega_Y(2r)|$.
 
Set $\L:=\omega_Y(-2r)$.
 We claim that $h^0(\L)=0$. Indeed, suppose 
 by contradiction that  $h^0(\L)\ne 0$. 
Then $\omega_Y\simeq\O_Y(2r)$, because $Y$ is irreducible and $\L$ has degree zero,  implying 
$h^0(\O_Y(2r))=2$ and $\omega_Y(2r)\simeq\O_Y(2r)^{\otimes 2}$. It follows that  $\pi|_Y$ factors via a degree 2 morphism $Y\ra \Ps^1$, hence $C=\pi|_Y(Y)$ is not a plane quartic, which is a contradiction. 

Set $\L_E:=\omega_g(2Y)$. We claim that $\L_E$ is the canonical sheaf of $g$ with focus on $E$. Indeed, $h^0(\L_E|_X)=3$, by  Lemma \ref{twist-canon}, $h^0(\L_E|_{E'}(-\Delta_{E'}))=h^0(\omega_Y(-2r))=0$, by the above claim, 
 and  $h^0(\L_E|_{Y'}(-\Delta_{Y'}))=h^0(\O_E(2r))=2$, by Riemann-Roch.  
 In particular, the correction number for $\L_E$  at $r$ is $c_E(r)=2$. 
 
 Recall that $R_Y$ is the ramification divisor of $(V_Y,\L_Y|_Y)$, where $V_Y$ is the vector space
 $V_Y=\text{Im}(H^0(\L_Y|_X)\hookrightarrow 
 H^0(\L_Y|_Y))$. 
 Notice that $h^0(\L_Y|_{Y})=h^0(\omega_Y(2r))=3$, hence $V_Y=H^0(\omega_Y(2r))$. We have   
 $\dim V_Y(-r)=h^0(\omega_Y(r))=2$, $\dim V_Y(-2r)=h^0(\omega_Y)=2$, 
 $\dim V_Y(-3r)=h^0(\omega_Y(-r))=1$ and $\dim V_Y(-4r)=h^0(\omega_Y(-2r))=0$. In this way, $\nu_r R_Y=2$, and we can write $R_Y=2r+\sum_{i=1}^{\deg R_Y-2} q_i$, where  
$\{q_1,\dots,q_{\deg R_Y-2}\} \subset Y-E$.
 
 Recall that $R_E$ is the ramification of $(V_E,\L_E|_E)$, where $V_E$ is the vector space 
 $V_E=\text{Im}(H^0(\L_E|_X)\hookrightarrow H^0(\L_E|_E))$. Notice that  
 $h^0(\L_E|_{E})=H^0(\O_E(3r))=3$, hence $V_E=H^0(\O_E(3r))$. We have 
  $\dim V_E(-r)=h^0(\O_E(2r))=2$, $\dim V_E(-2r)=h^0(\O_E(r))=1$, $\dim V_E(-3r)=h^0(\O_E)=1$ and 
$\dim V_E(-4r)=h^0(\O_E(-r))=0$. In this way,  $\nu_r R_E=1$, and we can write $R_E=r+\sum_{i=1}^{\deg R_E-1} q'_i$, where $\{q'_1,\dots,q'_{\deg R_E-1}\}\subset E-Y$.  
 By (\ref{plu}), 
 $\deg R_E=9$, hence (\ref{cycle-form}) implies that 
 $[W_{g,0}]=R_Y+R_E-3r=\sum_{i=1}^{16}q_i+\sum_{i=1}^8 q'_i$, and we are done  if $p$ is a cusp.
\end{proof}

\begin{Lem}\label{degen} 
Let $C$ be a  nodal plane quartic curve which is the union of a smooth irreducible cubic $Y$ and a line $Z$, and set  $\{p_1,p_2,p_3\}:=Y\cap Z$.  If $f\:\C\ra B$ is a regular smoothing  of $C$, then 
$$[W_{f,0}]=R+(s_1+\dots+s_6)+3(p_1+p_2+p_3)$$
where $\{s_1,\dots,s_6\}\subset Z\cap C^{sm}$ and $R$ is the ramification divisor of $|\O_Y(1)|$.
 \end{Lem}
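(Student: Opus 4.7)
The plan is to apply the Esteves--Medeiros formula \eqref{cycle-form}, which requires identifying the canonical sheaves of $f$ with focus on each component $Y$ and $Z$ together with the ramification divisors of the corresponding limit canonical aspects. Recall that $C$ has arithmetic genus $g_C=3$, and set $\Delta:=p_1+p_2+p_3$. The crucial geometric input is that $p_1,p_2,p_3$ are collinear on $Y$ (they lie on $Z$), so $\O_Y(\Delta)\simeq\O_Y(1)$.

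For the focus on $Y$, I would take $\L_Y=\omega_f$ and verify the three conditions in \eqref{focus-cond}: using $\omega_f|_Z\simeq\omega_Z(\Delta)\simeq\O_{\Ps^1}(1)$ and $\omega_f|_Y\simeq\omega_Y(\Delta)\simeq\O_Y(1)$, one computes $h^0(\omega_f|_C)=3$, $h^0(\omega_f|_Z(-\Delta))=h^0(\O_{\Ps^1}(-2))=0$, and $h^0(\omega_f|_Y(-\Delta))=h^0(\O_Y)=1<3$. Hence $\L_Y=\omega_f$ and the correction numbers are $c_Y(p_i)=0$. Since the kernel of $\rho_{\L_Y,Y}$ equals $H^0(\omega_f|_Z(-\Delta))=0$, the map $H^0(\omega_f|_C)\to H^0(\omega_f|_Y)$ is injective, and by dimensions the image $V_Y$ is the complete linear system $H^0(\O_Y(1))$. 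Therefore $R_Y=R$, the ramification divisor of $|\O_Y(1)|$.

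For the focus on $Z$, I would take $\L_Z=\omega_f(Y)$. By Lemma~\ref{twist-canon} we have $h^0(\L_Z|_C)=3$. Using $\O_\C(Y)|_Y\simeq\O_Y(-\Delta)$ and $\O_\C(Y)|_Z\simeq\O_Z(\Delta)$, one gets $\L_Z|_Y\simeq\O_Y$ and $\L_Z|_Z\simeq\O_{\Ps^1}(4)$, so $h^0(\L_Z|_Y(-\Delta))=h^0(\O_Y(-\Delta))=0$ and $h^0(\L_Z|_Z(-\Delta))=h^0(\O_{\Ps^1}(1))=2<3$. Thus $\L_Z$ is the canonical sheaf with focus on $Z$ and $c_Z(p_i)=1$. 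The restriction $H^0(\L_Z|_C)\to H^0(\L_Z|_Z)$ is injective (its kernel is $H^0(\L_Z|_Y(-\Delta))=0$), so $V_Z$ is a $g^2_4$ on $Z\simeq\Ps^1$. Since sections on $Y$ are constants that must match the values of a degree $4$ polynomial on $Z$ at $p_1,p_2,p_3$, the space $V_Z$ is exactly $\{s\in H^0(\O_{\Ps^1}(4)) : s(p_1)=s(p_2)=s(p_3)\}$, i.e.\ sections of the form $c+\prod_j(x-p_j)\cdot(ax+b)$.

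To finish, I would compute the ramification of $V_Z$ at each $p_i$. A direct check on the basis above shows that $\dim V_Z(-p_i)=2$, $\dim V_Z(-2p_i)=1$, and $\dim V_Z(-3p_i)=0$, so the vanishing sequence at $p_i$ is the generic $(0,1,2)$ and hence $\nu_{p_i}R_Z=0$. The Pl\"ucker formula \eqref{plu} gives $\deg R_Z=2\cdot 3\cdot(-1)+3\cdot 4=6$, so $R_Z=s_1+\cdots+s_6$ with all $s_k\in Z\cap C^{sm}$. Plugging into \eqref{cycle-form} with $c_{p_i}=g(g-1-c_Y(p_i)-c_Z(p_i))=3(3-1-0-1)=3$ yields
\[
[W_{f,0}]=R_Y+R_Z+3(p_1+p_2+p_3)=R+(s_1+\cdots+s_6)+3(p_1+p_2+p_3),
\]
as required. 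The only genuinely delicate step is recognizing that the collinearity of $p_1,p_2,p_3$ on $Y$ forces $V_Y$ to coincide with $|\O_Y(1)|$ and that, on the $Z$-side, the matching condition with a constant on $Y$ produces precisely the $g^2_4$ whose ramification avoids the nodes.
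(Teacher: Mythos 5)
Your proposal is correct and follows essentially the same route as the paper: the same twists $\L_Y=\omega_f$ and $\L_Z=\omega_f(Y)$, the same verification of the focus conditions and correction numbers $c_Y(p_i)=0$, $c_Z(p_i)=1$, and the same conclusion via the Pl\"ucker formula and the cycle formula, with $\nu_{p_i}R_Z=0$ obtained from the vanishing sequence $(0,1,2)$ of $V_Z$ at $p_i$ (the paper derives this from the inclusion $V_Z(-p_i)\subseteq\rho_Z(H^0(\L_Z|_C(-p_1-p_2-p_3)))$ rather than from your explicit basis, but the computation is the same). Your remark that collinearity of $p_1,p_2,p_3$ identifies $V_Y$ with the complete system $|\O_Y(1)|$, so that $R_Y=R$, is a point the paper leaves implicit.
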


\begin{proof}
Set  $\L_Y:=\omega_f$.  The canonical sheaf of $f$ with focus on $Y$ is $\L_Y$, because  $h^0(\L_Y|_C)=3$, 
  $h^0(\L_Y|_{Y'}(-\Delta_{Y'}))=h^0(\omega_Z)=0$, and 
  $h^0(\L_Y|_{Z'}(-\Delta_{Z'}))=h^0(\omega_Y)=1$.  In particular, the correction number for $\L_Y$  at 
  $p_i$ is $c_Y(p_i)=0$, for $i\in\{1,2,3\}$.  Set $\L_Z:=\omega_f(Y)$. The canonical sheaf of $f$ with focus on $Z$ is $\L_Z$, because   $h^0(\L_Z|_C)=3$, by Lemma \ref{twist-canon}, $h^0(\L_Z|_{Z'}(-\Delta_{Z'}))=h^0(\omega_Y(-\Delta_Y))=0$ and 
  $h^0(\L_Z|_{Y'}(-\Delta_{Y'}))=h^0(\omega_Z(\Delta_Z))=2$.  
 In particular the correction number for $\L_Z$  at 
  $p_i$ is $c_Z(p_i)=1$, for $i\in\{1,2,3\}$.   If  $R_Z$ is   the ramification divisor of the limit canonical aspect of $f$ with focus on $Z$,  then 
$\deg(R_Z)=6$, by (\ref{plu}). Hence, by (\ref{cycle-form}),  we are done if we show that $\nu_{p_i}R_Z=0$, for $i\in\{1,2,3\}$.  
  
  Consider  $V_Z=\text{Im}(\rho_Z\:H^0(\L_Z|_C)\hookrightarrow H^0(\L_Z|_Z))$,  $\rho_Z(s)=s|_Z$. 
 Assume that  $s'(p_1)=0$, for some $s'=\rho_Z(s)\in V_Z$. Then  $s|_Y=0$, because $\L_Z|_Y\simeq\O_Y$, and   hence
\begin{equation}\label{VW}
V_Z(-p_1)\subseteq\rho_Z(H^0(\L_Z|_C(-p_1-p_2-p_3)))=H^0(\omega_Z(p_1+p_2+p_3))=\co^2.
\end{equation}
Since the other inclusion in~\eqref{VW} is clear, we get $\dim V_Z(-p_1)=2$, and hence also 
$\dim V_Z(-2p_1)=1$ and $\dim V_Z(-3p_1)=0$. In this way,   
  $\nu_{p_1}R_Z=0$, and similarly   $\nu_{p_2}R_Z=\nu_{p_3}R_Z=0$, and hence  we are done.
   \end{proof}

Let $\Ps^{14}$ be the projective space parameterizing plane quartic curves and $[C]\in \Ps^{14}$ be the point  parameterizing a plane quartic $C$.   Let $\V\subset \Ps^{14}$ be the open subset parameterizing reduced quartic curves  that are GIT-semistable with respect to the natural action of $PGL(3)$ on  
$\Ps^{14}$ and with finite stabilizer.   If  $[C]\in \V$, then $C$ is reduced and a singular point of $C$ is a node, a cusp or a tacnode. The double conics are the unique GIT-semistable non-reduced quartics and if $C$ is a smooth quartic, then $[C]\in \V$.   We refer  to \cite[Chapter 4.2]{MFK}, \cite[Section 3.4]{CS1} or \cite{AF} for a detailed list of the curves parameterized by $\V$.

If $C$ is a smooth plane curve, an \emph{inflection line} of $C$ is a line cutting $C$ in a point with multiplicity at least 3.  
Recall that a smooth plane quartic admits exactly 24 inflection lines, counted with multiplicity. We denote by    $\F_C\in \Sym({\Ps^2}^\vee)$  the configuration of  inflection lines of a smooth plane quartic curve $C$.   Let $\V^0\subset \V$ be the open subset  parameterizing  smooth plane quartics and consider 
   the morphism  $\F^*\: \V^0\ra \Sym({\Ps^2}^\vee)$ such that  $\F^*([C])=\F_C$, for every $[C]\in \V^0$.

\begin{Prop}\label{F-map} 
 There exists a morphism $\F\:\V\ra \Sym({\Ps^2}^\vee)$ such that $\F|_{\V^0}=\F^*$.
\end{Prop}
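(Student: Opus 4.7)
The plan is to show that the rational map extending $\F^*$ is regular on all of $\V$. Since $\Sym^{24}((\Ps^2)^\vee)$ is projective and $\V$ is smooth and irreducible (as an open subset of $\Ps^{14}$), the morphism $\F^*$ extends to a rational map $\F \colon \V \dashrightarrow \Sym^{24}((\Ps^2)^\vee)$. Let $\overline{\Gamma} \subseteq \V \times \Sym^{24}((\Ps^2)^\vee)$ denote the closure of the graph of $\F^*$, taken with its reduced structure, and let $p_1 \colon \overline{\Gamma} \to \V$ be the first projection.  Then $p_1$ is proper, birational, surjective, and an isomorphism over $\V^0$; by Zariski's main theorem applied to the normal variety $\V$, if $p_1$ is bijective on closed points then it is finite and birational, hence an isomorphism, and $\F := p_2 \circ p_1^{-1}$ is the desired morphism.

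To prove that $p_1$ is injective on closed points, it suffices to show that, for every $[C] \in \V$ and every smoothing $f \colon \C \to B$ of $C$ to plane quartics, the limit in $\Sym^{24}((\Ps^2)^\vee)$ of the inflection line configurations of the generic fibers depends only on $C$.  Possibly after a finite base change and a resolution as in \cite[Section 3C]{HM} (compare the proof of Lemma~\ref{custac}) to reduce to a regular smoothing of a nodal model, the $24$ inflection points of the generic fiber degenerate to the limit Weierstrass cycle $[W_{f,0}]$ of \cite{EM}.  At each smooth point $p$ of $C$ in the support of $[W_{f,0}]$, the corresponding inflection lines have limit $T_p C \in (\Ps^2)^\vee$, with multiplicity $\nu_p [W_{f,0}]$.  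At each singular point $q$ in the support of $[W_{f,0}]$, the inflection lines degenerating to $q$ are read off from the limit canonical aspect of the resolution with focus on the exceptional component over $q$, exactly as in Lemmas~\ref{nod}, \ref{custac}, \ref{degen}; they form a positive combination of the tangent cone lines of $C$ at $q$ with multiplicities determined by the local analytic type of the singularity.  Summing the contributions produces a $0$-cycle of degree $24$ in $(\Ps^2)^\vee$ that depends only on $C$.

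The hard part will be the exhaustive analysis at singular points belonging to the higher-codimension strata of $\V \setminus \V^0$: quartics with several singular points at once, reducible quartics not of the form covered by Lemma~\ref{degen}, and combinations of node/cusp/tacnode singularities on possibly reducible curves.  In each such configuration, the argument follows the same pattern as in Section~\ref{infle-lim}: construct a regular smoothing of a suitable partial normalization of $C$, identify the canonical sheaf with focus on each component of the central fiber via the conditions~\eqref{focus-cond}, and read off the local contribution to the limit cycle from the ramification divisors of the limit canonical aspects together with the correction terms at the nodes of the resolved curve.  The key point is that in every case the local contribution at a singular point $q$ is determined by the analytic type of $C$ at $q$ alone, so that all smoothings of $C$ produce the same limit $0$-cycle, and the proof is complete.
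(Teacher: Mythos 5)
Your overall framework (close the graph in $\V\times \Sym({\Ps^2}^\vee)$ and invoke Zariski's main theorem over the normal variety $\V$) is exactly the paper's, but the way you propose to verify the hypothesis of ZMT contains a genuine gap. You reduce to showing that $p_1$ is \emph{injective} on closed points, i.e.\ that the limit configuration of inflection lines is the same for every smoothing of $C$, and you then only sketch this: you explicitly defer ``the hard part'' (arbitrary combinations of singularities, reducible curves not covered by Lemma~\ref{degen}) to an exhaustive case analysis that is not carried out. Worse, the mechanism you propose for that analysis --- reading the limit lines off the limit Weierstrass cycle $[W_{f,0}]$ and claiming this cycle ``depends only on $C$'' --- conflicts with a point the paper itself flags in Remark~\ref{imp-obs}: the limits of the inflection \emph{points} may depend on the chosen smoothing (already for a node the cycle $[W_{f,0}]$ is computed only after choosing $\C$ with an $A_1$-singularity, and carries undetermined contributions $\epsilon_i$). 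So independence of the line configuration cannot be obtained simply by asserting independence of the point cycle, and your injectivity claim is not established. Note also that the full strength of the limit analysis you invoke is essentially Proposition~\ref{F-conf}, whose proof in the paper \emph{uses} the morphism $\F$ you are trying to construct, so following that road risks circularity.

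The fix is that injectivity is not needed: properness plus \emph{quasi-finiteness} of $p_1$ gives finiteness, and a finite birational morphism onto the normal variety $\V$ is already an isomorphism. Quasi-finiteness follows from an elementary observation requiring no degeneration machinery: any limit $L$ of inflection lines must meet $C$ with multiplicity at least $3$ at some point $p$, so either $L$ is a linear component of $C$, or $p\in C^{sing}$ (a double point, forcing $L$ to lie in the tangent cone at $p$), or $p\in C^{sm}$ and $L$ is a singular point of the reduced dual curve $C^\vee$; in each case $L$ ranges over a finite set determined by $C$. Replacing your second and third paragraphs by this argument closes the gap and recovers the paper's proof.
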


\begin{proof}
Let $\Gamma$ be the closure inside $\V\times \Sym({\Ps^2}^\vee)$ of the graph 
$$\Gamma^0:=\{([C],\F_C) : [C] \in \V^0\}\subset  \V\times \Sym({\Ps^2}^\vee).$$
 Let  $\pi_1\:\Gamma\ra \V$ and $\pi_2\: \Gamma\ra   \Sym({\Ps^2}^\vee)$ be respectively  the restriction of the first and the second  projection morphism.  
 
 If $[C]\in \V$, we claim that  only a finite number of lines can be limits of inflection lines of smooth curves degenerating to $C$. In fact, recall that $C$ is reduced and its singularities are double points. Consider a smoothing of  $C$, and let  $L$ be a line which is  a limit of inflection lines of the general curve of the family. It follows that $L$ cuts  $C$ with multiplicity at least 3 in a point $p$. If $L$ is a linear component of $C$, then it varies in a finite set.   
Suppose that $L\not\subset C$. If $p\in C^{sing}$, then $p$ is a double point of $C$, and $L$ is one of the lines of the tangent cone of $C$ at $p$. In particular, $L$ varies in a finite set. If $p\in C^{sm}$, then $L$ corresponds to a singular point of the dual curve $C^\vee$, hence $L$ varies again in a finite set, because we work over the field of complex numbers, and hence $C^\vee$ is reduced. 
 
 In particular, the fibers of $\pi_1$ are finite.  Notice that  $\V$ is smooth, because it is an open subset of $\Ps^{14}$. Moreover $\pi_1|_{\Gamma^0}$ is an isomorphism onto the smooth variety $\V^0$, thus
  $\Gamma^0$, and hence $\Gamma$, are irreducible.  Since  $\pi_1$ is a birational morphism,   $\pi_1$ is an isomorphism, by \cite[Corollaire 4.4.9]{G}.  The required morphism is  $\F:=\pi_2\circ\pi_1^{-1}$.
\end{proof}

\begin{Rem}
In the proof of Proposition \ref{F-map} we did not use the condition that the points of $\V$ parametrize GIT-semistable curve. Indeed, the same proof shows that $\F^*$ actually extends to the open subset of $\Ps^{14}$ parametrizing reduced quartics with singular points of multiplicity two.
\end{Rem}

For every $[C]\in\V$,  set  $\F_C:=\F([C])\in \Sym({\Ps^2}^\vee)$. We can view $\F_C$ as a (possibly non-reduced) hypersurface of $\Ps^2$. 
We call a line  $L\subset \Ps^2$  an \emph{inflection line of} $C$ if $L$ is a component of $\F_C^{red}$. 
If $L$ is not a component of $\F_C$, we set $\mu_L\F_C=0$, otherwise  
$\mu_L \F_C$ will be the multiplicity of $L$ as a component of $\F_C$.  
For every $L\subset\F_C^{red}$, we say that $L$ is \emph{of type 0} if $L\cap C\subset C^{sm}$, of  \emph{type 1} if  $L\cap C^{sing}\ne\emptyset$ and $L\not\subset C$, and \emph{degenerate} if $L\subset C$.

\begin{Rem}\label{imp-obs}
To compute the multiplicity of a component of $\F_C$, for $[C]\in \V$, we will use the following observation. Consider the incidence variety 
$$
\I:=\{([C],L): L\subseteq \F_C\}\subseteq \V\times(\Ps^2)^\vee
$$
and the finite morphism $\pi_\I\col\I\ra \V$ of degree 24 obtained by restricting the first projection $\V\times (\Ps^2)^\vee\ra \V$ to $\I$. The cycle associated to the fiber of $\pi_\I$ over $[C]$ is
$\sum_{L\subseteq \F^{red}_C} (\mu_L\F_C) L$.  Notice that if $\C\ra B$ is a smoothing of $C$ to general  smooth plane curves, with associated morphism $B\ra \V$, and if we set $\I_B:=B\times_\V \I$, then the first  projection $\pi_{\I_B}\col\I_B\ra B$ is a finite morphism of degree 24 and the cycle associated to the  fiber of $\pi_{\I_B}$ over $0\in B$ is again  $\sum_{L\subseteq \F^{red}_C} (\mu_L\F_C) L$.  We would like to warn the reader to a possibly confusing point: while the limits of the inflection {\em{lines}} do not depend on the chosen smoothing, the limits of the inflection {\em{points}} may depend on the smoothing.
\end{Rem}

\begin{Lem}\label{mult-norm}
Let $B$ be the spectrum of a discrete valuation ring with closed point $0$. 
Let $\gamma \col C\ra B$ be a finite morphism  and 
$\nu\col C^\nu\ra C$ be the normalization of $C$. Let $\sum_{p\in\gamma^{-1}(0)} c_p p$ and $\sum_{(\gamma\circ\nu)^{-1}(0)} d_q q$ be the cycles associated to the fiber  respectively of $\gamma$ and $\gamma\circ\nu$  over $0$, where $c_p$ and $d_q$ are positive integers. Then for every $p\in\gamma^{-1}(0)$ we have 
$c_p=\sum_{q\in \nu^{-1}(p)} d_q.$
\end{Lem}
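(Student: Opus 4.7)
The strategy is to pass to stalks and convert the cycle-theoretic statement into one about lengths of local rings. Localizing at $p$, set $A := \O_{C,p}$, and let $\wt A$ denote the semi-local stalk of $\O_{C^\nu}$ at $\nu^{-1}(p)$; since $\gamma$ is finite and the normalization of a reduced excellent scheme is finite, both are finitely generated $R$-modules, and $\wt A = \prod_{q \in \nu^{-1}(p)} \wt A_q$ with each $\wt A_q$ a DVR. Letting $\pi \in R$ be a uniformizer, the cycle multiplicities translate to
\[
c_p = \text{length}_A(A/\pi A), \qquad d_q = \text{length}_{\wt A_q}(\wt A_q/\pi \wt A_q).
\]
Since we work over $\co$ (cf.\ Subsection~\ref{Not}), the residue fields at closed points of $C$ and $C^\nu$ are all equal to $\co$, so $\text{length}_A(\wt A/\pi \wt A) = \sum_q d_q$. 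The lemma thus reduces to the equality $\text{length}_A(A/\pi A) = \text{length}_A(\wt A/\pi \wt A)$.

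The next step is to show that $A$ and $\wt A$ are free of the same rank as $R$-modules. Assuming $C$ is reduced and $\gamma$ is dominant (the natural hypotheses implicit in the formulation), every minimal prime of $A$ maps to the generic point of $B$, so $\pi$ is a non-zero-divisor on $A$, making $A$ a torsion-free, and hence free, finitely generated $R$-module of rank $n := \dim_K(A \otimes_R K)$, where $K := \text{Frac}(R)$. A similar argument applies to $\wt A$, which is a product of DVRs each containing $R$. The morphism $\nu$ is birational, so $A \otimes_R K = \wt A \otimes_R K$, forcing $\text{rank}_R(A) = \text{rank}_R(\wt A) = n$. Therefore $A/\pi A$ and $\wt A/\pi \wt A$ are both isomorphic to $(R/\pi)^n$ as $R$-modules; computing lengths over $A$ and using once more that the residue fields are $\co$, they agree, and the lemma follows.

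The main subtlety is the $R$-flatness of $A$ and $\wt A$, which rests on the reducedness of $C$ and the dominance of $\gamma$. Without these hypotheses, an embedded or non-reduced component of $C$ supported over $0 \in B$ would contribute to $c_p$ but not to the normalization, and the lemma would fail as stated. In the intended application (Remark~\ref{imp-obs}), $\gamma$ arises from a finite morphism between varieties over $\co$, so these hypotheses are automatic and the argument above applies.
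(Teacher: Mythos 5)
Your strategy---localizing at $p$ and comparing lengths of the fibre rings---is genuinely different from the paper's proof, which never localizes: the paper factors $\nu$ through the normalization $\alpha$ at $p$ alone and deduces the identity at $p$ from conservation of the total degree, $d=\sum_p c_p=\sum_r e_r=\sum_q d_q$, together with the fact that $\alpha$ is an isomorphism away from $\alpha^{-1}(p)$. Your local route is viable, but as written it has a genuine gap: the ring $A=\O_{C,p}$ is the localization of the finite $R$-algebra $\Gamma(C,\O_C)$ at one of its maximal ideals, and such a localization is in general \emph{not} a finitely generated $R$-module when $R$ is not Henselian (the same applies to the semi-local ring $\wt A$, which for the same reason need not split as $\prod_q \wt A_q$). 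Consequently $A$ need not be $R$-free, and the identity your proof rests on, $\operatorname{length}(A/\pi A)=\dim_K(A\otimes_R K)$, can fail. For instance, take $R=\co[t]_{(t)}$ and $C=\Spec R[x]/(x^2-1-t)$: this scheme is finite étale of degree $2$ over $B$, already normal, with two points over $0$, each of multiplicity $c_p=1$; yet at either point $A\otimes_R K$ is the quadratic field $K(\sqrt{1+t})$, of dimension $2$ over $K$, while $A/tA\cong\co$ has length $1$. Your argument would assign multiplicity $2$ to each of these points, giving total degree $4$ for a degree-$2$ cover. (The lemma itself is of course true here; it is the intermediate freeness step that breaks.)

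The local argument can be repaired in two standard ways. One can base-change to the completion $\wh{R}$, over which $\Gamma(C,\O_C)\otimes_R\wh{R}$ does decompose as a product of local rings finite and free over $\wh{R}$, using that completion preserves the lengths of the fibre rings and commutes with normalization for excellent rings. More economically, one can avoid $R$-freeness altogether: since $C$ is reduced and every component dominates $B$, the inclusion $A\ra\wt A$ sits in an exact sequence $0\ra A\ra\wt A\ra Q\ra 0$ in which $Q$ is a finite $A$-module with $Q\otimes_R K=0$, hence of finite length; the snake lemma for multiplication by $\pi$ (a non-zero-divisor on $A$ and on $\wt A$) gives an exact sequence $0\ra Q[\pi]\ra A/\pi A\ra \wt A/\pi\wt A\ra Q/\pi Q\ra 0$, and since kernel and cokernel of an endomorphism of a finite-length module have equal lengths, one gets $\operatorname{length}(A/\pi A)=\operatorname{length}(\wt A/\pi\wt A)=\sum_q d_q$, as desired. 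Your closing remarks on the need for reducedness of $C$ and dominance over $B$ are correct, and apply equally to the paper's own argument.
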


\begin{proof}
If $p\in C^{sm}$, we have nothing to prove. If $p\in C^{sing}$, pick  the normalization $\alpha\col D\ra C$ of $C$ at $p$. Then $\nu=\alpha\circ\beta$, where $\beta\col C^\nu\ra D$ is the normalization of $D$. Let $\sum_{r\in (\gamma\circ\alpha)^{-1}(0)} e_r r$ 
be the cycle associated to the fiber of $\gamma\circ\alpha$ over $0$, where $e_r$ is a positive integer.  If $d$ is the degree of $\gamma$, then we have
$$c_p=d-\sum_{p\ne p'\in\gamma^{-1}(0)} c_{p'}=d-\underset{r\not\in\alpha^{-1}(p)}{\sum_{r\in (\gamma\circ\alpha)^{-1}(0)}}e_r=\sum_{r'\in \alpha^{-1}(p)} e_{r'}=\sum_{q\in\nu^{-1}(p)} d_q,$$
where the second equality follows because $\alpha$ is an isomorphism away from $\alpha^{-1}(p)$, and the fourth equality follows because $\beta$ is an isomorphism locally at any $q\in\beta^{-1}(\alpha^{-1}(p))=\nu^{-1}(p)$.
\end{proof}

\begin{Lem}\label{dual-cubic}
Let $f\col\C\ra B$  be a smoothing of  an irreducible plane cubic  $C$ with a node (respectively a cusp).  Set $k=3$ (respectively $k=1$).  Then exactly   $k$  lines cutting $C$ in a smooth point with multiplicity  $3$ are degenerations of  inflection lines of the general fiber of $f$.
\end{Lem}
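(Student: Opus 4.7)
The plan is to reduce the count to a local intersection calculation at the singularity of $C$ by means of the Hessian. Let $F_t \in H^0(\Ps^2, \O(3))$ be an equation of the fiber $C_t = Z(F_t)$ of $f$, and let $H_t := \det(\partial^2 F_t/\partial x_i \partial x_j)$ denote the Hessian cubic. For every smooth fiber $C_t$, the nine inflection points of $C_t$ form the scheme-theoretic intersection $Z(F_t) \cap Z(H_t)$, and the associated inflection line at each inflection point is the tangent to $C_t$ there. Since $F_0$ and $H_0$ are coprime cubic forms (one checks that $H_0 \not\equiv 0$ for $C$ irreducible nodal or cuspidal), the family $\{Z(F_t) \cap Z(H_t)\}_{t \in B}$ is a flat family of length-nine subschemes of $\Ps^2$ over $B$. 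Hence the nine inflection points of the generic fibers specialize to the $0$-cycle $F_0 \cdot H_0$ supported on $C$ as $t \to 0$.

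The main technical step is the local intersection multiplicity $I_p(F_0, H_0)$ at the singular point $p$. Since any irreducible nodal (respectively cuspidal) plane cubic is projectively equivalent to the normal form $zy^2 - x^3 - x^2 z$ (respectively $y^2 z - x^3$), this number depends only on the singularity type. An explicit computation in a local chart shows that in the nodal case $H_0$ also has a node at $p$ with the same tangent cone as $F_0$, and that $H_0 \equiv x(3y^2 + x^2) \pmod{F_0}$, yielding $I_p(F_0, H_0) = 6$; in the cuspidal case $H_0$ has the cuspidal tangent as a double component and $H_0 \equiv x^4 \pmod{F_0}$, yielding $I_p(F_0, H_0) = 8$. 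This local calculation is the main work in the argument, though it is entirely elementary.

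Granted the local count, B\'ezout's theorem forces exactly $9 - 6 = 3$ (respectively $9 - 8 = 1$) residual intersection points to lie in $C^{sm} = C \setminus \{p\}$, and because the total length on the smooth locus equals the number of distinct points, the intersection at each such $q$ is automatically transverse. For each residual point $q$, the tangent line $\ell_q$ to $C$ at $q$ cuts $C$ with multiplicity exactly $3$ at $q$; since an irreducible cubic meets any line in only three points counted with multiplicity, $\ell_q$ meets $C$ nowhere else, and distinct residual points $q$ yield distinct lines $\ell_q$. Because the flat limit of the nine inflection points of the generic fibers is the cycle $F_0 \cdot H_0$, each such $q$ is a limit of inflection points of smooth fibers, and continuity of the tangent-line map at smooth points of $C$ shows that $\ell_q$ is the limit of the corresponding inflection lines. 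This exhibits exactly $k$ lines cutting $C$ at a smooth point with multiplicity $3$ as degenerations of inflection lines of the general fiber.
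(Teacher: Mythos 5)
Your proof is correct, but it takes a genuinely different route from the paper's. The paper first invokes Eisenbud--Harris to assert that \emph{every} line meeting $C$ with multiplicity three at a smooth point is a limit of inflection lines, and then counts such lines by projective duality: $C^\vee$ is a rational plane curve of degree $4$ (resp.\ $3$) whose only singularities are the $h$ cusps dual to the smooth flexes, so the genus count gives $h=3$ (resp.\ $h=1$). You instead track the inflection scheme $Z(F_t)\cap Z(H_t)$ cut out by the Hessian; since this is a flat length-nine family, conservation of number reduces everything to the local intersection multiplicities $I_p(F_0,H_0)=6$ at the node and $8$ at the cusp, which you compute correctly on the normal forms ($H_0\equiv x(3y^2+x^2)$ and $H_0\equiv x^4$ modulo $F_0$, respectively). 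Your approach is more self-contained---it establishes simultaneously the count and the fact that each such line is actually a degeneration of inflection lines, without citing [EH2] or the structure of the dual curve---at the price of an explicit computation; the paper's duality argument is shorter but outsources the degeneration statement to a reference. One step you should justify rather than assert: that the residual length-$k$ cycle supported on $C^{sm}$ consists of $k$ distinct reduced points. This holds because an irreducible cubic has no hyperflexes (a line not contained in $C$ cannot meet it with multiplicity greater than $3$ by B\'ezout), so every smooth flex is ordinary and contributes exactly $1$ to $F_0\cdot H_0$; as written, your clause ``the total length on the smooth locus equals the number of distinct points'' assumes the very fact it is meant to deliver.
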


\begin{proof}
Let $h$ be the number of lines cutting $C$ in a smooth point with multiplicity 3.  By \cite[pages 153-154]{EH2}, any such line is degeneration of an inflection line of the general fiber of $f$, hence we are done if we show that $h=k$.  The dual curve $C^\vee\subset({\Ps^2})^{\vee}$  of $C$ is  irreducible and, since we work over the complex numbers, its singular locus  consists  exactly of $h$ cusps. It follows that $g_{C^\vee}-h=0$, because $C$ and $C^\vee$ are birational. Let $d$ be the degree of  $C^\vee$. If $C$ has a node, then $d=4$ and hence $h=g_{C^\vee}=3$.  If $C$ has a cusp, then $d=3$ and hence $h=g_{C^\vee}=1$.
\end{proof}

\begin{Prop}\label{F-conf}
Let  $[C]\in\V$ and $L\subset \Ps^2$ a line.  The line $L$ is an inflection line of $C$ if and only if either $L$ is contained in the tangent cone of $C$ at some singular point of $C$, or $L$ is an inflection line of type 0 of $C$.  Moreover, assuming that $L$ is an inflection line of $C$, the multiplicity of $L$ in $\F_C$ satisfies the following conditions: 
\begin{itemize}
\item[(i)]
if $L$ is a line of the tangent cone of $C$ at some $p\in C^{sing}$ such that either $L\not\subset C$ or $L\subset C$ and $L$ is tangent  at $p$ to a component of $C$ different from $L$, then $\mu_L \F_C \in\{3,4\}$,  $\mu_L \F_C=8$, $\mu_L \F_C=12$ if   $p$ is  respectively  a node, a cusp, a tacnode; 
 \item[(ii)]
if $L$ is a component of $C$ which is not tangent to any other component of $C$, then $\mu_L\F_C=6$;
\item[(iii)]
if $L$ is an inflection line of type 0 of $C$, then $\mu_L \F_C\le 2$,  and if equality holds, then  $C$ is irreducible. 
\end{itemize}
\end{Prop}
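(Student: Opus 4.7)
The plan is to realise $\F_C$ as the flat limit of the configurations $\F_{C_t}$ along a smoothing $f\col\C\ra B$ of $C$ to general smooth plane quartics (Remark~\ref{imp-obs}) and to match multiplicities by tracking where each of the $24$ limit inflection lines specialises. Since a smooth plane quartic is canonically embedded, its inflection lines are the tangent lines at its $24$ Weierstrass points, so $\mu_L\F_C$ equals the number, counted with multiplicity, of Weierstrass points of $C_t$ whose tangent line tends to $L$ as $t\to 0$; the limit Weierstrass cycle $[W_{f,0}]$ is supplied by Lemmas~\ref{nod}, \ref{custac} and~\ref{degen}.

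For the characterisation, as $t\to 0$ each Weierstrass point specialises to some $p\in C$ and the corresponding tangent line to some $L\subset\Ps^2$. If $p\in C^{sm}$, then $L$ is tangent to $C$ at $p$ with intersection multiplicity at least $3$, and either $L\not\subset C$ (the type $0$ case) or $L$ is a linear component of $C$; such a component automatically meets every other component of $C$ (any two lines in $\Ps^2$ meet, and B\'ezout handles higher-degree components), so it passes through at least one point of $C^{sing}$ and lies in the tangent cone there. If $p\in C^{sing}$, working on the blow-up/semistable model of $\C$ used in Lemmas~\ref{nod}--\ref{custac} shows that $L$ lies in the tangent cone of $C$ at $p$. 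The converse follows once the multiplicities in~(i)--(iii) are shown to be positive.

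Next, I read off the three multiplicity statements from the limit-Weierstrass lemmas. For~(i), at a node of an irreducible quartic Lemma~\ref{nod} places multiplicity $3+\nu_{p_i}R_Y\in\{3,4\}$ over each branch $p_i\in E\cap Y$, and each branch corresponds bijectively to one of the two tangent cone lines via the canonical map $\pi|_Y\col Y\ra C$; at a cusp or tacnode, Lemma~\ref{custac} concentrates all of $8$ or $12$ over the singularity, which must be absorbed by the unique tangent cone line. In the reducible node case, Lemma~\ref{degen} gives $3$ on the transverse tangent line, still in $\{3,4\}$. For~(ii), Lemma~\ref{degen} exhibits six reduced points $s_1,\dots,s_6\in Z\cap C^{sm}$, at each of which the tangent to $C$ is $Z$ itself, yielding $\mu_Z\F_C=6$; the remaining linear-component configurations in $\V$ are handled by the same focus-of-canonical-sheaf analysis. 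For~(iii), letting $k$ be the intersection multiplicity of $L$ with $C$ at the smooth flex $q$, one has $k\in\{3,4\}$ since $L\cdot C=4$; continuity of the tangent map at the smooth point $q$ forces every nearby Weierstrass point to have tangent tending to $L$, contributing the local Pl\"ucker weight $k-2\le 2$. Equality forces $k=4$ and $L\cap C=\{q\}$ set-theoretically, but if $C$ were reducible then every component would meet $L$ and hence contain $q$, putting $q\in C^{sing}$ and contradicting $q\in C^{sm}$; so $C$ must be irreducible.

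The main technical point to pin down is the geometric identification in~(i) between the branches $p_1,p_2\in E\cap Y$ of Lemma~\ref{nod} and the two tangent cone lines at the node: namely, that under the canonical map $|\omega_Y(p_1+p_2)|\col Y\ra C\subset\Ps^2$ the tangent direction at $p_i$ reproduces the corresponding tangent cone line. Once this is set up, Lemma~\ref{mult-norm} correctly partitions the multiplicity between the two tangent cone lines at the node.
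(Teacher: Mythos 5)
Your overall strategy --- computing $\mu_L\F_C$ as the sum of the multiplicities in the limit Weierstrass cycle $[W_{f,0}]$ at the points where $L$ meets $C$ with multiplicity at least $3$, transported through the Gauss map and Lemma~\ref{mult-norm} --- is exactly the paper's, and your treatment of the irreducible cases is sound: the node via Lemma~\ref{nod}, the cusp and tacnode via Lemma~\ref{custac}, and type~$0$ lines via the local Pl\"ucker weight $k-2$. Your observation that $\mu_L\F_C=2$ forces a hyperflex, hence $L\cap C=\{q\}$ scheme-theoretically, hence irreducibility by B\'ezout, is a slightly cleaner route to the last clause of (iii) than the paper's exhaustive check.

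The genuine gap is in the reducible cases, which you dismiss with ``the remaining linear-component configurations in $\V$ are handled by the same focus-of-canonical-sheaf analysis.'' That analysis is not available: the only reducible configuration for which a limit canonical aspect is actually computed is a line plus a \emph{smooth} cubic meeting transversally (Lemma~\ref{degen}). For four lines, a conic plus two lines, two conics, a line plus a nodal or cuspidal cubic, and all the tangential configurations --- which are exactly what is needed for the sub-case of (i) where $L\subset C$ is tangent to another component, for the value $\mu_L\F_C=12$ at tacnodes of reducible curves, and for (ii) in general --- no such computation is supplied, and producing one would mean determining the canonical sheaf with focus on each of up to four components. The paper instead establishes only \emph{lower} bounds in these situations (by specialising from nearby curves along deformations in which the relevant line is constant, so that its multiplicity can only go up) and then forces the exact values configuration by configuration using the identity $\sum_{L\subseteq\F_C}\mu_L\F_C=24$. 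This counting step is entirely absent from your proposal, and without it, or a genuine extension of the limit-canonical-aspect computations to every reducible member of $\V$, assertions (i) and (ii) remain unproved for reducible $C$ (and even the bound in (iii) for a type~$0$ line of a reducible quartic rests on an uncomputed aspect).
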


\begin{proof}
Let $p$ be the limit of an inflection point in a fixed smoothing of $C$.  Through the point $p$ there is a line $L$ in $\F_C$ meeting $C$ with multiplicity 
at least three at $p$.  If $p\in C^{sm}$, then the line $L$ is an inflection line of type 0, or $L$ is a component of $C$.  If  $p\in C^{sing}$, then the line $L$ is contained in the tangent cone to $C$ at $p$, since the point $p$ has multiplicity 2 in $C$ and at least 3 in $L \cap C$.  This proves that the lines of $\F_C$ must be of the stated form.

We now prove the converse, establishing at the same time the results about the multiplicities.  
Let $g\:\C\ra B$ be  a smoothing  of $C=g^{-1}(0)$ to general plane quartics, 
and let  $b\col B\ra \V$ be the associated morphism. 
Recall the varieties $\I\hookrightarrow\V\times (\Ps^2)^\vee$ and 
$\I_B=B\times_\V \I\hookrightarrow B\times_\V\V\times (\Ps^2)^\vee\simeq B\times (\Ps^2)^\vee$ defined in Remark \ref{imp-obs}. 
 Consider the relative Gauss $B$-map $\theta\:\C\dashrightarrow  B\times (\Ps^2)^\vee$ defined as  
$\theta(q)=(g(q),T_q C_{g(q)})$, where $T_q C_{g(q)}$ is the tangent line to $C_{g(q)}=g^{-1}(g(q))$ at 
$q$.  Notice that   $\theta$ is defined away from $C^{sing}$. 
 Consider the $g$-Weierstrass scheme $W_g\hookrightarrow \C$ and let $\psi_g\: W_g\ra B$ be the associated finite morphism of degree 24.  Let  
 $\nu\:W_g^\nu\ra W_g$ be the normalization of $W_g$ and  
 $\gamma\: W_g^\nu\ra B\times(\Ps^2)^\vee$  the morphism extending $\theta|_{W_g}$.   Notice that
  $\theta$ sends the points of $W_g$ away from $\psi_g^{-1}(0)$ to points of $\I_B$, hence 
  $\gamma$ factors through the inclusion $\I_B\hookrightarrow B\times (\Ps^2)^\vee$. We get the following commutative diagram.
  \[
\SelectTips{cm}{11}
\begin{xy} <16pt,0pt>:
\xymatrix{
 \C \ar@{-->}[rr]^{\theta\,\,\,\,\,\,\,\,\,\,} & & B\times (\Ps^2)^\vee\UseTips\ar[r]  &\V\times (\Ps^2)^\vee  \\
 W_g\ar@{^{(}->}[u] \ar[d]^{\psi_g} & W_g^\nu \ar[l]_{\nu}  \UseTips\ar[r]^{\gamma}& \I_B \ar[r]\ar[d]^{\pi_{\I_B}}\ar@{^{(}->}[u]&  \I\ar@{^{(}->}[u] \ar[d]^{\pi_\I} \\
   B \ar@{=}[rr] &  &  B \ar[r]^{b}   &  \V \\
 }
\end{xy}
\]
   It follows from Remark~\ref{imp-obs} that $\gamma(\nu^{-1}(p))$ parametrizes  the set of lines of $\F_C$ cutting $C$ with multiplicity at least 3 in  $p$, for every 
 $p\in\psi_g^{-1}(0)$. In particular, 
 \begin{equation}\label{gamma-inj}
 \gamma(\nu^{-1}(p))\cap \gamma(\nu^{-1}(p'))=\emptyset,
 \end{equation}
  for every $p,p'\in\psi_g^{-1}(0)$ such that 
 $p\ne p'$ and such that the line through $p$ and $p'$ is not contained in $C$.

Assume that  $C$ is irreducible and that 
 $L$ is a line of the tangent cone of $C$ at some $p_0\in C^{sing}$, so that   $L\not\subset C$. Suppose that $p_0$ is a cusp or a tacnode.  Choose a smoothing $g\col \C\ra B$ of $C$ with $\C$ smooth. Then  
  there is a finite base change totally ramified over $0\in B$ such that the statement of Lemma \ref{custac} holds.  To avoid cumbersome notations, we keep the same symbols for the family obtained from 
  $g\col\C\ra B$ after the base change.   By Lemma \ref{custac}, we have $[W_{g,0}]=\sum q_i +tp_0$, where $p_0\not\in\{q_1,\dots,q_{24-t}\}$ and 
  $t=8$,  if $p_0$ is a cusp, and $t=12$,  if $p_0$ is a tacnode. 
 Observe that $p_0\in W_g$ and $L$ is the unique line cutting $C$ with multiplicity at least 3 at $p_0$, 
and hence $\gamma(\nu^{-1}(p_0))=(0,L)$. Using (\ref{gamma-inj}), we deduce the following set-theoretic equality  
\begin{equation}\label{p0-fib}
\nu^{-1}(p_0)=\gamma^{-1}((0,L)).
\end{equation}   Since  $\pi_{\I_B}\circ\gamma=\psi_g\circ\nu$, we have
  $$[(\pi_{\I_B}\circ\gamma)^{-1}(0)]=[(\psi_g\circ\nu)^{-1}(0)]=\underset{s\not\in \nu^{-1}(p_0)}{\sum_{s\in(\psi_g\circ\nu)^{-1}(0)}}u_s s+\sum_{s\in\nu^{-1}(p_0)}u_s s$$
  where $u_s$ are positive integers. Applying Lemma  \ref{mult-norm} to $\psi_g$ and $\psi_g\circ\nu$,
   it follows that $t=\sum_{s\in\nu^{-1}(p_0)}u_s$. 
 By  Remark~\ref{imp-obs},  we have $[\pi_{\I_B}^{-1}(0)]=\sum_{M\subseteq\F_C^{red}}(\mu_M \F_C) M$.
   On the other hand, $W_g$ and $\I_B$ are isomorphic over $B\setminus\{0\}$, hence  
     $\gamma\col W^\nu_g\ra \I_B$ is the normalization of  $\I_B$. Applying again Lemma \ref{mult-norm} to $\pi_{\I_B}$ and $\pi_{\I_B}\circ\gamma$, and using  (\ref{p0-fib}), it follows that 
     $$\mu_L \F_C=\sum_{s\in \gamma^{-1}((0,L))} u_s =\sum_{s\in\nu^{-1}(p_0)} u_s =t.$$
     We obtain that $L\in\F_C$ and that condition (i) holds.

Suppose that $p_0$ is a node of $C$.  Choose a smoothing $g\col\C\ra B$ of $C$ where $\C$ has a singularity of $A_1$-type at $p_0$.  Let
$\pi\col \X\ra \C$ be the blowup of $\C$ at $p_0$ and $f\col\X\ra B$ be the composed morphism $f:=g\circ\pi$. 
Write $f^{-1}(0)=Y\cup E$, where $Y$ is the normalization of $C$ at $p_0$ and $E$ is the exceptional component of $\pi$.  Consider the $f$-Weierstrass scheme $W_f\hookrightarrow \X$ and let $\psi_f\col W_f\ra B$ be the associated finite morphism of degree 24. 
 Since $W_f$ and $W_g$ are isomorphic over $B\setminus\{0\}$, it follows that $W^\nu_g$ is the normalization of $W_f$. 
 Let $\ol{\nu}\col W^\nu_g\ra W_f$ be the normalization map.
We get the following commutative diagram.
  \[
\SelectTips{cm}{11}
\begin{xy} <16pt,0pt>:
\xymatrix{
 \X\ar[r]^{\pi} & \C \ar@{-->}[r]^{\theta\,\,\,\,\,\,\,\,} & B \times(\Ps^2)^\vee \\
 W_f\ar@{^{(}->}[u] \ar[d]^{\psi_f} & W_g^\nu\ar[l]_{\ol\nu}  \UseTips\ar[r]^{\gamma}& \I_B \ar[d]^{\pi_{\I_B}}\ar@{^{(}->}[u]\\
   B \ar@{=}[rr] &  &  B     \\
 }
\end{xy}
\]
It follows from  Lemma  \ref{nod} that  
$[W_{f,0}]=\sum_{i=1}^{18-\epsilon_1-\epsilon_2} q_i+(3+\epsilon_1)p_1+(3+\epsilon_2)p_2$, for some $\epsilon_1,\epsilon_2\in\{0,1\}$, where 
$\{p_1,p_2\}:=E\cap Y$ and $\{q_1,\dots,q_{18-\epsilon_1-\epsilon_2}\}\subset Y-E$. 
The set  $\gamma(\ol\nu^{-1}(p_i))$ parametrizes the lines of $\F_C$ cutting $C$ with multiplicity at least 3 in $p_0$ and tangent to the branch of $p_0$ determined by $p_i$, for $i\in\{1,2\}$. In this way, we have
$\gamma(\ol\nu^{-1}(p_i))=(0,L_i)$,  where  $L_i$ is the unique line of the tangent cone of $C$ at $p_0$ which is tangent to the branch of $p_0$ determined by   $p_i$,  for $i\in\{1,2\}$. Arguing as in the case where $p_0$ is a cusp or a tacnode, we have that  $\mu_{L_i}\F_C=3+\epsilon_i$, for $i\in\{1,2\}$. 
  We obtain that $L_i\in\F_C$, for $i\in\{1,2\}$ and that condition (i) holds.

We complete the proof for $C$  irreducible as follows. 
Suppose that $C$ is irreducible and that $L$ is an inflection line $L$ of type 0 of $C$ . The non-reduced point $p_0$ of $L\cap C$ has multiplicity at most 4, hence $p_0$ has multiplicity at most 2 in $[W_{g,0}]$.  Arguing again as in the case where $p_0$ is a cusp or a tacnode, it follows that $\mu_L \F_C\le 2$.

 From now on, assume that $C$ is reducible.  
Suppose that $L$ is a line such that  $L\subset C$.   If $C$ is nodal and 
$\ol{C-L}$ is  an irreducible smooth plane cubic, pick   a regular smoothing $g\:\C\ra B$ of $C$ to general plane quartics, and consider the first diagram introduced in the proof.
Let $s_1,\dots,s_6$ be the points of Lemma \ref{degen}, so that     
$\{s_1,\dots,s_6\}\subset  L\cap C^{sm}\cap W_g$. Recall that $\gamma(\nu^{-1}(s_i))$ parametrizes the set of lines of $\F_C$ cutting $C$  with multiplicity at least 3 in $s_i$. Since 
any line different from $L$ cuts $C$  with multiplicity 1 in
$s_i$,    
necessarily  $\gamma(\nu^{-1}(s_i))=(0,L)$, and hence, arguing again as in the case where $p_0$ is a cusp or a tacnode, we have   $\mu_{L}\F_C\ge 6$. In the remaining cases,  if we take a deformation of $C$ to nodal
curves which are  the union of $L$ and a smooth cubic, we see that $\mu_{L}\F_C\ge 6$, because $L$ is constant along this deformation. 

Suppose that 
$L$ is a line of the tangent cone of $C$ at some $p_0\in C^{sing}$. Then $L$ is the specialization of inflection lines of type 1  of a family of curves whose general member is  irreducible with exactly one singular point of the same analytic type  of $p_0$. Hence   $\mu_{L}\F_C\ge t$, where $t\in\{3,4\}$, if $p_0$ is a node, $t=8$ if $p_0$ is a cusp, and $t=12$, if $p_0$ is a tacnode.  
To complete the proof, we only need to show that conditions (i), (ii) and (iii) hold for a reducible curve.
  In what follows, we will use the above inequalities and that $\sum_{L\subseteq\F_C}\mu_{L}\F_C=24$.  
  
Suppose that $C=Q_1\cup Q_2$,  for  smooth conics $Q_1,Q_2$. If $C$ is nodal and   $L_1,\dots,L_8$ are the lines of the tangent cone of $C$ at its nodes, then $\mu_{L_i}\F_C\ge 3$, and hence necessarily $\mu_{L_i}\F_C=3$, for $i\in\{1,\dots,8\}$.  
 If  $Q_1$ and $Q_2$ intersects each other transversally at 2 points and they are  tangent at a point $r$, necessarily 
  $\mu_{L_0}\F_C=12$ and $\mu_{L_i}\F_C=3$, for $i\in\{1,\dots,4\}$, where 
  $L_0$ is the tangent cone of $C$ at the tacnode $r$ and $L_1,\dots,L_4$ are the lines of the tangent cone of $C$ at its nodes.
  
 Suppose that  $C=Q\cup L_1\cup L_2$, for  a smooth conic $Q$ and lines $L_1,L_2$.  If $C$ is nodal,    necessarily $\mu_{L_1}\F_C=\mu_{L_2}\F_C=6$ and $\mu_{L_i}\F_C=3$, for $i\in\{3,\dots,6\}$, where   $L_3,\dots,L_6$ are  the tangents to $Q$ at the points of the set  $Q\cap (L_1\cup L_2)$.
 If $Q$ is tangent to $L_1$ and transverse to $L_2$,  
 necessarily $\mu_{L_1}\F_C=12$,  $\mu_{L_2}\F_C=6$ and $\mu_{L_3}\F_C=\mu_{L_4}\F_C=3$, where $L_3,L_4$ are the tangents to $Q$ at the points of the set $Q\cap L_2$. 
 
 If $C=\cup_{i=1}^4 L_i$, for   lines $L_1,\dots,L_4$,  necessarily  $\mu_{L_i}\F_C=6$,  for $i\in\{1,\dots,4\}$.

 Suppose that $C=Y\cup L$,  for an   irreducible plane cubic $Y$ and a line $L$. Set $k=3$ (respectively $k=1$) if $Y$ has  a node  (respectively a cusp).   Assume that $Y\cap L$ are nodes and set  
 $\{p_1,p_2,p_3\}:=Y\cap L$ . Let $L_i$ be the tangent to $Y$ at $p_i$ for $i\in\{1,2,3\}$.   
 We distinguish two cases. 
 
 In the first case, $Y\cap L$ contains no inflectionary point of $Y$. If  $Y$ is smooth, we can use  Lemma \ref{degen}.  Thus, arguing as for the inflection lines of type 0 when $C$ is irreducible, it follows that  there are $9$ inflection lines of type 0 of $C$, which are exactly the inflection lines of $Y$, and necessarily 
  $\mu_{L}\F_C=6$, $\mu_{L_i}\F_C=3$, for $i\in\{1,2,3\}$ and the $9$ inflection lines of type 0 of 
   $C$ have multiplicity 1. If $Y$ is singular,  pick  a deformation of $C$ to plane curves which are union of $L$ and a smooth plane cubic.    It follows from Lemma  \ref{dual-cubic} that  $C$ has $k$ inflection lines of type 0, and 
   hence necessarily   $\mu_{L}\F_C=6$,  $\mu_{L_i}\F_C=3$, for $i\in\{1,2,3\}$, $\mu_M\F_C=3$ (respectively  $\mu_M\F_C=8$), where $M$ is a line of the tangent cone at  a node (respectively a cusp) of $Y$, and the  $k$ inflection lines of type 0 of $C$ have multiplicity 1.  We isolate the case in which $C$ is nodal in Remark~\ref{ivv}.
 
  In the second case, there is a positive integer $k'\le \text{min}(k,3)$ such that $p_i$ is an inflectionary point of $Y$, 
   for $i\in\{1,\dots,k'\}$, where $k:=9$ if $Y$ is smooth.  Pick  a deformation of $C$  to plane curves which are union of $Y$ and a general line intersecting $Y$ transversally.  Then   exactly two inflection lines specialize to  $L_i$, for $i\in\{1,\dots,k'\}$, one of type 0  of multiplicity 1 and one of type 1 of multiplicity  3,  and $C$ has   $k-k'$ inflection lines of type 0.  Hence $\mu_{L}\F_C=6$, $\mu_{L_i}\F_C=4$ for $i\in\{1,\dots,k'\}$, $\mu_{L_i}=3$ for $i\in\{k'+1,\dots,3\}$,  $\mu_M\F_C=3$ (respectively  $\mu_M\F_C=8$), where $M$ is a line of the tangent cone at  a node (respectively a cusp) of $Y$, and the  $k-k'$ lines of type 0 of $C$ have multiplicity 1.

  Suppose that $C=Y\cup L$, for  an irreducible plane cubic $Y$  and a line $L$  such that $Y$ and $L$ are tangent at a point $t$. Notice that $t$  is a non-inflectionary point of  $Y$, because $[C]\in \V$.  Write $\{t,n\}:=Y\cap L$.  Consider a deformation of $C$ to plane curves which are union of $Y$ and a general line containing   $n$ and intersecting $Y$ transversally.  Then exactly 3 inflection lines of the general fiber of the deformation degenerate to $L$, of multiplicities respectively 6, 3, 3, hence $\mu_L\F_C=12$, while the other inflection lines are constant along the family.  
  
Notice that if $C$ is reducible, then the inflection lines of type 0 of $C$ have always multiplicity 1. We get that   conditions (i),  (ii) and (iii) hold. 
  \end{proof}
  
\begin{Rem} \label{ivv}
For ease of reference we observe the following immediate consequence of Proposition~\ref{F-conf}.
\begin{itemize}
\item 
If  either $C$ is irreducible, with $C^{sing}$ consisting exactly of one tacnode, or if $C$ is the union of a line $L$ and a smooth cubic $Y$ such that $L$ and $Y$  are tangent at a non-inflectionary   point of $Y$, then $\F_C$ contains at least 6 inflection lines of type 0 each one of which has multiplicity at most 2.
\item 
If $C$ is  nodal and $C=Y\cup L$, where $Y$ is  an irreducible cubic  with a node  and $L$ is a line such that $Y\cap L$ contains no inflectionary points of $Y$,  then $\F_C$ consists of the line $L$ with multiplicity 6, the three inflection lines at smooth points of $Y$ of type 0 with multiplicity 1, and five lines of type 1 of multiplicity 3, two coming from the inflection lines  through the node of $Y$, and three coming from inflection lines through the intersection $Y \cap L$ and different from $L$ (see also the proof of Proposition~\ref{F-conf}.)
\end{itemize}
\end{Rem}

\begin{Prop}\label{F-stab}
Consider  the natural action of $PGL(3)$ on $\Sym({\Ps^2}^\vee)$. Let $[C]\in\V$ and $\delta,\gamma,\tau$ be respectively  the number of nodes, cusps, tacnodes of $C$. Then the following conditions hold
\begin{itemize}
\item[(i)]
 if  $\F_C$ is GIT-unstable, then either $\gamma\ge 2$ or $\tau\ge 1$.
 \item[(ii)]
 if $\gamma=\tau=0$, then $\F_C$ is GIT-stable.
 \end{itemize}
 \end{Prop}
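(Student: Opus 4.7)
My plan is to verify the Hilbert--Mumford numerical criterion directly. For a weighted configuration of $N = 24$ points on $(\Ps^2)^\vee$ under the natural $PGL(3)$-action, this criterion asserts that the configuration is GIT-semistable (resp.\ GIT-stable) if and only if every single point carries weight at most (resp.\ strictly less than) $N/3 = 8$ and every line contains weight at most (resp.\ strictly less than) $2N/3 = 16$. Dualizing $(\Ps^2)^\vee \leftrightarrow \Ps^2$, the two conditions become
\begin{itemize}
\item[(S1)] $\mu_L \F_C \le 8$ for every line $L \subset \Ps^2$, and
\item[(S2)] $\sum_{L \ni q} \mu_L \F_C \le 16$ for every point $q \in \Ps^2$,
\end{itemize}
with strict inequalities characterising GIT-stability.

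Condition (S1) follows at once from the classification in Proposition~\ref{F-conf}. The value $12$ of $\mu_L \F_C$ arises only at a tacnodal tangent cone line or at a line component of $C$ tangent to another component (the latter itself forcing a tacnode), while the value $8$ arises only at a cuspidal tangent cone line. Hence (S1) fails only when $\tau \ge 1$, and the strict version $\mu_L \F_C < 8$ fails only when $\gamma \ge 1$ or $\tau \ge 1$, which handles the (S1)-part of both~(i) and~(ii).

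The substantive step is (S2). I would split the inflection lines through $q \in \Ps^2$ into three groups: (a) tangent cone lines at $q$ when $q \in C^{sing}$, contributing at most $8$ at a node, exactly $8$ at a cusp, and $12$ at a tacnode; (b) line components of $C$ through $q$, at most two of which can meet at $q$ since $[C] \in \V$ excludes triple points, with the combined contribution (a)+(b) saturating at $12$; and (c) type~$0$ inflection lines through $q$, which correspond to singular points of the dual curve $C^\vee$ on the line in $(\Ps^2)^\vee$ parameterising lines through $q$. A Bezout argument against $C^\vee$---whose degree is at most $d(d-1) = 12$ and drops with each singularity of $C$---caps group~(c): an ordinary flex of $C$ costs $2$ units of intersection against the line for $1$ unit of $\F_C$-weight, while a hyperflex costs $3$ units for $2$ units.

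Combining (a)--(c) in each combinatorial type of $C$ allowed by $\gamma = \tau = 0$ (for~(ii)) or $\gamma \le 1, \tau = 0$ (for~(i)) yields the required inequality. The hard part is the bookkeeping: one must go through each admissible type of $C$, using the explicit enumeration of inflection lines in the proof of Proposition~\ref{F-conf} and in Remark~\ref{ivv}, to confirm strict inequality in~(ii) and $\le$ in~(i). The tightest cases I expect are $q$ an ordinary node of an irreducible nodal quartic, where (a)$\le 8$ and the $C^\vee$-bound gives (c)$\le 6$ for a total of at most $14$; and $q$ at the intersection of two line components of $C$, where (a)+(b)=12 but the remaining component is at most a smooth conic, which carries no flexes, so (c)=0 and the total is exactly $12$. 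All other configurations yield smaller totals, confirming (S2) strictly under the hypotheses of~(ii) and with at most equality under the hypotheses of~(i).
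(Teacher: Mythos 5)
Your overall strategy coincides with the paper's: both reduce Proposition~\ref{F-stab} to the Hilbert--Mumford inequalities $\mu_L\F_C\le 8$ and $\mu_p\F_C\le 16$ (strict for stability), dispose of the first via Proposition~\ref{F-conf}, and attack the second by bounding the inflection lines through a fixed point using the projection from that point (equivalently, B\'ezout against $C^\vee$). The gap is in your decomposition: splitting the lines through $q$ into (a) tangent-cone lines \emph{at} $q$, (b) linear components through $q$, and (c) type-$0$ lines through $q$ is not exhaustive, because it omits the type-$1$ inflection lines through $q$ that are tangent-cone lines at singular points of $C$ \emph{other than} $q$. These are exactly the lines that dominate $\mu_q\F_C$ in the critical configurations: for an irreducible quartic with three nodes, a point $q\notin C$ can lie on one tangent-cone line from each node, each of weight up to $4$ by Proposition~\ref{F-conf}(i), which already accounts for $12$ of the budget of $16$ and comes entirely from lines your scheme does not count. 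The paper's proof introduces the set $M^p_1$ of \emph{all} type-$1$ lines through $p$, bounds $\#M^p_1\le 3$ (two tangent-cone lines at the same node meet only at that node, and an irreducible quartic has at most three nodes), and then feeds $\#M^p_0$ and $\#M^p_1$ \emph{jointly} into the single Riemann--Hurwitz budget $\deg R=12-2\delta-2\gamma$ for the projection from $p$. Your B\'ezout bound for group (c) alone, which does not charge the type-$1$ lines against the same intersection number $q^\vee\cdot C^\vee$, cannot close the argument.

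A smaller but related point: your identification of the tight cases is off. At a node $q$ of an irreducible quartic no type-$0$ line passes through $q$ at all (a type-$0$ line meets $C$ only at smooth points), so $\mu_q\F_C\le 8$ there rather than your claimed $14$. The genuinely delicate cases are points $q\in C^{sm}$ or $q\notin C$ lying simultaneously on several type-$1$ lines and several type-$0$ lines, and, for part (i), points lying on the weight-$8$ cuspidal tangent line; these are the configurations around which the paper's case analysis is organized, and they are invisible to your bookkeeping as written.
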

 
\begin{proof}
Throughout the proof,  we assume that $\gamma\le 1$ and $\tau=0$. 
For a point $p\in \Ps^2$, let $\mu_p\F_C:=\sum_{p\in L\subset\F_C}\mu_L\F_C$.  By~\cite[Proposition~4.3]{MFK} or~\cite[Theorem~4.17]{N}, $\F_C$ is GIT-semistable  if and only if  $\mu_p\F_C\le 16$, for every $p\in\Ps^2$,  and  $\mu_L \F_C\le 8$, for every $L\subset\F_C$, and GIT-stable if and only if both inequalities are strict.   By Proposition \ref{F-conf},   $\mu_L\F_C\le 8$, and the equality holds if and only if $\gamma=1$.  Thus, we only need to check that $\mu_p\F_C\le 16$, with $\mu_p\F_C<16$ if $\gamma=0$,  for every $p\in\Ps^2$. 

So, fix $p\in\Ps^2$.   Let $M^p_0$  be the set of inflection lines of type 0 of $C$ containing $p$. By Proposition \ref{F-conf}(iii), $\mu_L\F_C\le 2$, for every $L\in M_0^p$. Let $M^p_1$ be  the set of inflection lines of type 1 of $C$ containing $p$. We distinguish 3 cases. 

\smallskip 

(a) Suppose  that $C$ is irreducible.  If $p\in C^{sing}$, then $\mu_p\F_C\le 8$,  by Proposition \ref{F-conf}. Hence we can  assume  $p\not\in C^{sing}$. Let $\pi\:C^{\nu}\ra\Ps^1$ be the resolution of the projection  map  from $p$, where $C^{\nu}$ is the normalization of $C$, and $R$ be the ramification divisor of  $\pi$.  

First of all, assume that $p\not\in C$.  Then $\#\{q\in C^{\nu}: \nu_q R\ge 2\}=\#(M^p_0\cup M^p_1)$ and 
 $\deg R=12-2\delta-2\gamma$, by Riemann-Hurwitz. Assume that $\gamma=0$ and, by contradiction, that  $\mu_p\F_C\ge 16$.  Observe that $\#M^p_1\le 3$,  otherwise $p$ would be a node, which is a contradiction. 
   If $\#M^p_1=0$,  then $\#M^p_0\ge 8$,  hence $\deg R\ge 16$.   If $\#M^p_1=1$, 
    then $\#M^p_0\ge 6$, hence   $\deg R\ge 14$. 
If $\#M^p_1=2$, then  $\delta\ge 2$, otherwise $p$ would be a node,  and $\#M^p_0\ge 4$ which implies $\deg R\ge 12$.  If $\#M^p_1=3$,  then $\delta\ge 3$, otherwise $p$ would be a node,  and  
$\#M^p_0\ge 2$, implying  $\deg R\ge 10$. In any case, we get a contradiction. 
Assume now that $\gamma=1$ and, by contradiction, that $\mu_p\F_C>16$.   Let $L$ be the line of the tangent cone at the cusp of $C$, with  $\mu_{L}\F_C=8$.  If $p\notin L$, then $\mu_p\F_C\le 16$,  a contradiction.   
Hence $p\in L$ and $\#M^p_1\ge 1$.  If $\#M^p_1=1$, then $\#M^p_0\ge 5$, hence $\deg R\ge 12$.
  If $\#M^p_1=2$, then $\delta\ge 1$ and $\#M^p_0\ge 3$, hence $\deg R \ge 10$.
   If $\#M^p_1=3$, then $\delta\ge 2$ and   $\#M^p_0\ge 1$, hence $\deg R \ge 8$. In any case, we get a contradiction. 

Suppose that $p\in C^{sm}$.   Then $\nu_p R\le 2$ and, if  $T_p C$ is the tangent to $C$ at $p$, we have $\#\{q\in C^{\nu}: \nu_q R\ge 2\}\ge\#((M^p_0\cup M^p_1)-\{T_p C\})$.  Notice that $\deg R=10-2\delta-2\gamma$, by Riemann-Hurwitz, and we can conclude arguing exactly as in the case $p\not\in C$.

\smallskip 

(b) Suppose that  $C=Q_1\cup Q_2$,  for conics $Q_1,Q_2$.   Hence $C$ is nodal, because $\tau=0$.   If $Q_1,Q_2$ are irreducible, then $p$ is contained  in at most four inflection lines of $C$, and hence  $\mu_p\F_C\le 12$.   If $Q_1$ is irreducible and $Q_2$ is union of two lines $L_1,L_2$,  then $\mu_p\F_C\le 12$, where the equality holds if and only if either $p\in L_1\cap L_2$ or $p\in L_i\cap L\cap L'$, for $i\in\{1,2\}$,  where $L,L'$ are components of $\F_C$ such that $\{L,L'\}\cap\{L_1,L_2\}=\emptyset$. 
If $Q_1$ and $Q_2$ are reducible, i.e. if $C$ is the union of four lines $L_1,\dots,L_4$, 
then  $\mu_p\F_C\le 12$, where the equality holds if and only if $p\in L_i\cap L_j$, for $i,j\in\{1,\dots,4\}$ and $i\ne j$. 

\smallskip 

(c)  Suppose that  $C=Y\cup L$, for   an irreducible cubic $Y$  and a line $L$. By contradiction, assume $\mu_p\F_C\ge 16$.  By Proposition \ref{F-conf}, $\mu_{L}\F_C=6$  and  $\mu_{L'}\F_C=1$, for  $L'\in M^p_0$.   We have  
$\#M^p_1\le 2$ and if $p\in C^{sing}$, then 
   $\mu_p\F_C\le 10$,  a contradiction.   
If $p\in L-Y$ and $Y$ is smooth, then  $\#M^p_0\ge 10$, a contradiction because $\#M^p_0\le 9$, by Proposition \ref{F-conf}.  If $p\in L-Y$ and $Y$ has a node,  then $\#M^p_1\le 1$ and  $\#M^p_0\ge 6$, a contradiction because $\#M^p_0\le 3$, by Proposition \ref{F-conf}.
   If $p\in L-Y$ and $Y$ has a cusp, then $\#M^p_1\le 1$ and $\#M^p_0\ge 2$, a contradiction because $\#M^p_0\le 1$, by Proposition~\ref{F-conf}.

   So  we can assume  $p\not\in C^{sing}\cup L$.   If $\mu_{L'}\F_C\ge 3$, for some $L'\subset\F_C$, then $p\in L'$, otherwise 
   $\mu_p\F_C\le 15$, a contradiction.  
 In this way, if $Y$ is smooth, then  $\#M^p_1=3$ and hence also
   $\#M^p_0\ge 4$.  We get a contradiction considering the projection  
$Y\ra\Ps^1$ from $p$ and by Riemann-Hurwitz.  
 If $Y$ has a node $q$,  then  $p$ is contained in the two lines of the tangent cone at $q$, hence $p=q\in C^{sing}$,  a contradiction. 
If $Y$ has a cusp $q$ and $L_0$ is the line of the tangent cone at $q$,  then  $\#M^p_1=4$,  with 
$L_0\in M^p_1$. 
 We get a contradiction by considering the resolution  
$Y^\nu\ra\Ps^1$  of the projection from $p$, where $Y^\nu$ is the normalization of $Y$, and by Riemann-Hurwitz.   
\end{proof}

\section{Recovering quartics from inflection lines and inflection points}\label{rec-sec}

Fix a line $L\subset \Ps^2$ and a point $p\in L$.  Recall that $\V\subset\Ps^{14}$ is the variety parametrizing reduced plane quartic curves which are GIT-semistable with respect to the natural action of $PGL(3)$ and with finite stabilizers. Let $\V_L\subset \V$ be the locus defined as the closure in $\V$ of 
 \begin{equation}\label{VL}
\V^0_L\:=\{[C]\in \V : L\not\subset C \text{ and } L\subset \F_C^{red}\}.
 \end{equation}
Let  $\V_{L,p}\subset \V_L$ be the locus defined as the closure in $\V_L$ of 
\begin{equation}\label{VLp}
\V^0_{L,p}:=\{[C]\in \V^0_L: p \text{ is the non-reduced point of } C\cap L\}.
\end{equation}

\begin{Lem}\label{Tang}   
Let  $[C]\in \V$ with $C$ nodal and $L\subset \Ps^2$ be a line. Then  
  \begin{itemize}
  \item[(i)]
  if $[C]\in \V^0_L$ and $C\cap L=\{p,q\}\subset C^{sm}$,  where  $p$ is  the non-reduced point of $C\cap L$, and $p\ne q$,  then 
   $$T_{[C]} \V_L\simeq H^0(C,\O_C(4)\otimes\O_C(-2p)).$$
  \item[(ii)]
    if $[C]\in \V^0_L$ and  $C\cap L=\{p,q\}$, with $p$  a node of $C$ and $q\in C^{sm}$,  and if  $\nu\: C^\nu\ra C$ is the normalization of $C$ at $p$, with  $\{p_1,p_2\}:=\nu^{-1}(p)$, where  $L$ is the tangent to the branch of $p$ determined by $p_1$,  then
 $$T_{[C]} \V_L\simeq H^0(C^\nu,\nu^*\O_C(4)\otimes \O_{C^\nu}(-2p_1-p_2)).$$    
  \item[(iii)]
  if   $L\subset C$ and $p\in C^{sm}\cap L$, then  $[C]\in \V_{L,p}$ and 
 $$T_{[C]} \V_{L,p}\simeq H^0(C,\O_C(4)\otimes \O_C(-3p)).$$
  \end{itemize}
\end{Lem}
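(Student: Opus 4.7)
The overall plan is to identify $T_{[C]} \V$ with $H^0(C, \O_C(4))$ via the natural restriction map (using that $\V$ is open in $\Ps^{14} = \Ps(H^0(\Ps^2, \O(4)))$ and that $H^1(\Ps^2, \O) = 0$), so that a first-order deformation $F_\epsilon = F + \epsilon G$ corresponds to the class of $G|_C$.  I then describe $\V_L$ and $\V_{L,p}$ near $[C]$ using the rational restriction map $\rho\col\Ps^{14} \dra \Ps^4 = |\O_L(4)|$, $[F'] \mapsto [F'|_L]$, defined on the open set $\{F'|_L \not\equiv 0\}$.

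For case~(i), the locus $\V^0_L$ coincides with the preimage under $\rho$ of the codimension-two discriminant $\Delta \subset \Ps^4$ of quartic divisors on $L$ with a root of multiplicity at least three.  Expanding $(F|_L + \epsilon G|_L)^{(k)}(\epsilon a + O(\epsilon^2))$ for $k = 0, 1, 2$, I find that $T_{[F|_L]} \Delta$ consists of classes $[H]$ with $H$ vanishing to order $\ge 2$ at the triple root $p$.  Pulling back through $d\rho$ produces
\[
T_{[C]} \V_L = \bigl\{\, [G] \in H^0(C, \O_C(4)) : G|_L \text{ vanishes to order $\ge 2$ at } p \,\bigr\}.
\]
Because $L \cdot C = 3p + q$ forces $L$ to be tangent to $C$ at the smooth point $p$, a local parameter of $L$ at $p$ is simultaneously a local parameter of $C$ at $p$, and the above condition is equivalent to $G|_C \in H^0(C, \O_C(4) \ot \O_C(-2p))$.

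For case~(ii), the same pullback through $\rho$ shows that $T_{[C]}\V_L$ consists of classes $[G]$ with $G|_L$ vanishing to order $\ge 2$ at $p$.  In local coordinates $(x, y)$ with $L = \{y = 0\}$ and $C$ having a node at $p = (0, 0)$ with tangent cone $y(y - ax)$, I parametrize the two branches of $\nu\col C^\nu \ra C$ at $p$ as $t_1 \mapsto (t_1, \beta_2 t_1^2 + \cdots)$ near $p_1$ (the branch tangent to $L$) and $t_2 \mapsto (t_2, a t_2 + \gamma_2 t_2^2 + \cdots)$ near $p_2$.  A direct expansion then shows that $\nu^*(G|_C)$ vanishes to order $\ge 2$ at $p_1$ and to order $\ge 1$ at $p_2$ if and only if $G(0,0) = G_x(0,0) = 0$, which is exactly the condition that $G|_L$ vanishes to order $\ge 2$ at $p$.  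This yields the stated identification with $H^0(C^\nu, \nu^*\O_C(4) \ot \O_{C^\nu}(-2p_1 - p_2))$.

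For case~(iii), the restriction $F|_L$ vanishes identically and $\rho$ is undefined at $[C]$, so I instead realize $\V_{L,p}$ as the closure of a linear locus.  Let $W \subset \Ps^{14}$ be the codimension-three linear subspace defined by $F'|_L(p) = (F'|_L)'(p) = (F'|_L)''(p) = 0$.  Then $\V^0_{L,p}$ is the open subset of $W \cap \V$ on which $F'|_L \not\equiv 0$, and since $W \cap \V \cap \{F'|_L \equiv 0\}$ is properly contained in the irreducible variety $W \cap \V$, one has $\V_{L,p} = W \cap \V$.  In particular, when $L \subset C$ the three linear conditions hold trivially, so $[C] \in \V_{L,p}$.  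The tangent space $T_{[C]} W$ is the codimension-three subspace of $T_{[C]}\Ps^{14}$ of classes $[G]$ with $G|_L$ vanishing to order $\ge 3$ at $p$; because $L$ is the unique component of $C$ through the smooth point $p$, this is the same as $G|_C$ vanishing to order $\ge 3$ at $p$, giving the identification with $H^0(C, \O_C(4) \ot \O_C(-3p))$.  The principal subtlety throughout is the correct translation of vanishing orders along $L$ into vanishing orders along $C$ (or $C^\nu$), which is handled case by case through the local parametrizations above.
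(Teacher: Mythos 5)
Your proposal is correct and follows essentially the same route as the paper's proof: both reduce to a first-order deformation analysis of the binary quartic $F|_L$ and then translate vanishing orders along $L$ into vanishing orders on $C$ or $C^\nu$, and for part (iii) the paper likewise identifies $\V_{L,p}$ with the intersection of $\V$ with a codimension-three linear subspace of $\Ps^{14}$. The one step you should make explicit is, in case (ii), that the map $\nu^*$ from your subspace of $H^0(C,\O_C(4))$ to $H^0(C^\nu,\nu^*\O_C(4)\otimes\O_{C^\nu}(-2p_1-p_2))$ is surjective and not merely injective: this is the statement that a section of the pullback vanishing at both preimages of the node descends to $C$, which the paper records explicitly and which also follows from the dimension count $12=12$.
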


\begin{proof}
 Let $x,y,z$ be  homogeneous coordinate of $\Ps^2$. Let $a_0,\dots,a_{14}$ be the homogeneous  coordinates of the projective space $\Ps^{14}$ parametrizing plane quartic curves.  Let $\co[x,y,z]_d$ be the vector space of homogeneous polynomials of degree $d$ in $x,y,z$.
 
 Suppose that $C$ is as in (i) or (ii).  
First of all, we will show that $\V^0_L$ is smooth of dimension 12 at $[C]$.  Up to an isomorphism 
of $\Ps^2$,  we can assume that $L=Z(z)$.  Notice that  $[Z(F)]\in \V^0_L$  for some
$F\in \co[x,y,z]_4$ if and only if   $F(x,y,0)$ is a binary quartic
$$F(x,y,0)=a_0x^4+a_1x^3y+a_2x^2y^2+a_3xy^3+a_4y^4$$
where $[a_0,a_1,a_2,a_3,a_4]\in \Ps^4_{a_0,\dots,a_4}$ and 
the binary quartic has at least a triple root.  
Then we have a rational map 
$\tau \colon \V^0_L\dashrightarrow L\times L$ defined as follows: if $[C'] \in \V^0_L$ and 
$C' \cap L = \{p',q'\}$ with $p' \neq q'$ and $p'$ is the non-reduced point of $C' \cap L$, then 
we let $\tau([C']) = (p',q')$.  
Near $[C]$, the map $\tau$ is defined by the assumption that $p$ and $q$ are different, its fibers are $\mathbb{A}^{10}_{a_5,\dots,a_{14}}$, and hence $\V^0_L$ is smooth of dimension 12 at $[C]$.  In particular,  $\dim(T_{[C]} \V_L)=12$.

Now, suppose that $C$ is as in (i).  By Riemann-Roch,  $h^0(C,\O_C(4)\otimes \O_C(-2p))=12$,  hence to conclude the proof of (i), we only need to show that the natural injective map $H^0(C,\O_C(4)\otimes \O_C(-2p))\hookrightarrow H^0(C,\O_C(4))$ factors via an injective map
\begin{equation}\label{incl1}
H^0(C,\O_C(4)\otimes \O_C(-2p))\hookrightarrow T_{[C]} \V_L.
\end{equation}

Indeed,  assume that $C=Z(F)$, for $F\in \co[x,y,z]_4$ and,  up to an isomorphism  of $L$,  that
  $F(x,y,0)=xy^3$, i.e. $p=[1,0,0]$ and $q=[0,1,0]$.  

Recall that  a  vector of $T_{[C]} \V$ is given by  a first order deformation  
$F+\epsilon\cdot  G$, where $G\in \co[x,y,z]_4$ 
 and  $\epsilon^2=0$.    This vector belongs to $T_{[C]} \V_L$ if and only if $(F+\epsilon\cdot G)(x,y,0)$ has at least a triple root.  
 If we view $G$ as a section of $\O_{\Ps^2}(4)$ and we consider  the surjective restriction map 
 $\rho\:H^0(\Ps^2,\O_{\Ps^2}(4))\ra H^0(C,\O_C(4))$, we obtain an identification  $T_{[C]} \V\simeq H^0(C,\O_C(4))$.  Notice that the vector space $H^0(C,\O_C(4)\otimes\O_C(-2p))$ is the image via $\rho$ of the set of sections of $\O_{\Ps^2}(4)$ induced by polynomials $\ol{G}\in\co[x,y,z]_4$ such that   $Z(\ol{G})$ and $C$, and hence $Z(\ol{G})$ and $L$, intersect each other with multiplicity at least 2 in $p$, i.e.  polynomials $\ol{G}$ such that  
\begin{equation}\label{Gbar}
\ol{G}(x,y,0)=y^2 H, 
\text{ for some } H\in \co[x,y]_2.
\end{equation}
  In this case,  write  $H=3 H' x+a y^2$, where    $H'\in \co[x,y]_1$ and $a\in\co$. Then 
$$(F+\epsilon \cdot \ol{G})(x,y,0)=F(x,y,0)+\epsilon\cdot \ol{G}(x,y,0)=xy^3+\epsilon\cdot y^2H=$$
$$=xy^3+\epsilon\cdot (a y^4+3 H' xy^2)=(y+\epsilon\cdot H')^3(x+\epsilon \cdot ay),$$
where in the last equation we used the relation $\epsilon^2=0$.  We see that $(F+\epsilon\cdot \ol{G})(x,y,0)$ has at least a triple root, hence it defines a vector of   $T_{[C]} \V_L$, and hence there exists an injective map as required in (\ref{incl1}).

Suppose now that $C$ is as in (ii).  Set $\L:=\nu^*\O_C(4)\otimes \O_{C^\nu}(-2p_1-p_2)$. 
Since $g_{C^\nu}=2$,  we have $h^0(C^\nu,\L)=12$, by Riemann-Roch. As before, to conclude the proof of (ii) 
it is enough that   there is an injective map
 $H^0(C^\nu,\L)\hookrightarrow T_{[C]} \V_L$.

Indeed, let $W\subset H^0(C,\O_C(4))$ be the subspace obtained as  the image via the restriction map 
 $\rho\:H^0(\Ps^2,\O_{\Ps^2}(4))\ra H^0(C,\O_C(4))$ of the set of sections of $\O_{\Ps^2}(4)$ induced by polynomials $\ol{G}$ satisfying (\ref{Gbar}).  Arguing as for (i), we have an injective map  $W\hookrightarrow T_{[C]}\V_L$.  If $W'\subset H^0(C, \O_C(4))$ is the subspace of sections vanishing on $p$, then $W\subset W'$.  
Let $\nu^*\: H^0(C,\O_C(4))\ra H^0(C^\nu,\nu^*\O_C(4))$ be  the injective map   induced by pullback of sections.   Since $p$ is a node,    any section of $H^0(C^\nu, \nu^*\O_C(4)\otimes\O_{C^\nu}(-p_1-p_2))$ descends to a section of $\O_C(4)$ vanishing on $p$. Thus the restricted map 
$\nu^*|_{W'}\: W' \ra H^0(C^\nu,\nu^*\O_C(4)\otimes\O_{C^\nu}(-p_1-p_2))$ is  an isomorphism,
identifying  $W$ with 
$H^0(C^\nu,\L)$,  by the definition of $W$. Hence  there is an injective map
 $H^0(C^\nu,\L)\hookrightarrow T_{[C]} \V_L$, as required.

To conclude the proof, suppose that $C$ is as in (iii), with $C=Z(F)$, $F\in \co[x,y,z]_4$. Recall that $L=Z(z)$ and $p=[1,0,0]$, hence $F(x,y,0)=0$.   Let $Z(H)$ be a  plane quartic curve such that 
$L\not\subset Z(H)$ and $[Z(H)]\in \V_{L,p}$. Define 
$$\C:=\{(q,t)\in \Ps^2\times \mathbb{A}^1_t \text{ such that }F_t(q):=t\cdot H(q)+F(q)=0\}\subseteq \Ps^2\times \mathbb{A}^1_t.$$ 
If $f\:\C\ra \mathbb{A}^1_t$ is the restriction of the second projection morphism,  then $f$ is a deformation of $C$ to curves of $\V_{L,p}$, because $F_t(x,y,0)=t\cdot H(x,y,0)$, and hence $[C]\in \V_{L,p}$.   

 The vector space $H^0(C,\O_C(4)\otimes\O_C(-3p))$ is the  image via the restriction map 
  $\rho\:H^0(\O_{\Ps^2}(4))\ra H^0(\O_C(4))$ of the set of sections of $\O_{\Ps^2}(4)$ induced by polynomials 
  $G\in\co[x,y,z]_4$ such that $Z(G)$ and $L$ intersect each other with multiplicity at least 3 in $p$, i.e. satisfying 
\begin{equation}\label{Gbarbis}
G(x,y,0)=y^3H, \text{ for some } H\in \co[x,y]_1.
\end{equation}
On the other hand, a first order deformation $F+\epsilon\cdot G$  of $C$  
is a vector of $T_{[C]}\V_{L,p}$, if $(F+\epsilon\cdot G)(x,y,0)$ has at least a triple root in $p$. Then, since  $(F+\epsilon\cdot G)(x,y,0) = \epsilon G(x,y,0)$, the vector space $T_{[C]}\V_{L,p}$ is given by  the first order deformations $F+\epsilon\cdot G$,  for  $G$ satisfying (\ref{Gbarbis}). In this way, $T_{[C]}\V_{L,p}\simeq H^0(C,\O_C(4)\otimes \O_C(-3p))$.
\end{proof}

\begin{Lem}\label{nodal-cubic}
Let $[C_1]\in \V$, where $C_1$ is a nodal plane quartic curve which is  union of an  irreducible cubic $Y$ with a node and a line $L$. Assume that  $Y\cap L$ contains no inflectionary points  of $Y$. If  $\F_{C_1}=\F_{C_2}$, for  some $[C_2]\in \V$, then $C_1=C_2$. In particular, the morphism $\F\:\V\ra Sym^{24}({\Ps^2}^\vee)$   of Proposition \ref{F-map} is generically finite onto its image.
\end{Lem}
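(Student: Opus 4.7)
The plan has two phases: classifying the possible $[C_2]$ with $\F_{C_2}=\F_{C_1}$, and a rigidity step forcing $C_2=C_1$.

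For the classification, I would apply Proposition \ref{F-conf} to the divisor $\F_{C_1}$. By Remark \ref{ivv}, $\F_{C_1}$ consists of the line $L$ with multiplicity $6$, five further lines of multiplicity $3$, and three lines of multiplicity $1$. Since $L$ is the unique multiplicity-$6$ line, Proposition \ref{F-conf}(ii) forces $L$ to be a component of $C_2$ that is not tangent to any other component, so $C_2 = L \cup Y_2$ for a cubic $Y_2$. The absence of a second multiplicity-$6$ line rules out reducible $Y_2$, and the absence in $\F_{C_1}$ of lines with multiplicities $4$, $8$, or $12$, combined with the multiplicity tables in the proof of Proposition \ref{F-conf}, rules out cuspidal $Y_2$, tacnodal contact between $L$ and $Y_2$, and inflectionary points in $Y_2 \cap L$. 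These eliminations force $Y_2$ to be an irreducible nodal cubic whose intersection with $L$ consists of three non-inflectionary smooth points.

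For the rigidity step, the three multiplicity-$1$ lines are common inflection tangents of $Y$ and $Y_2$, and the five multiplicity-$3$ lines decompose, for each cubic, into three tangent lines at the points of $L$-intersection plus two tangent cone lines at the node. I would show that this decomposition is identical for the two cubics: the pair of tangent cone lines of each cubic meets at its node (a point off $L$), while the three tangent lines meet $L$ at the three cubic-intersection points. Identifying the two partitions forces $Y$ and $Y_2$ to share the same node $n$ with the same reduced tangent cone, and to meet $L$ in the same three points $p_1,p_2,p_3$ with the same tangents $T_1,T_2,T_3$. With these data, the argument concludes via B\'ezout: at the common node both cubics have multiplicity $2$ with the same reduced tangent cone, so after rescaling the leading quadratic parts to agree the difference $Y-Y_2$ vanishes to order at least $3$ at $n$, and the standard inequality $I_n(Y,Y_2) \ge m_n(Y)\cdot\mathrm{ord}_n(Y-Y_2)$ gives $I_n(Y,Y_2) \ge 2\cdot 3 = 6$. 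Shared tangency at each $p_i$ gives $I_{p_i}(Y,Y_2) \ge 2$. Summing, $Y\cdot Y_2 \ge 6+2+2+2 = 12 > 9 = \deg(Y)\cdot\deg(Y_2)$, which contradicts B\'ezout unless $Y$ and $Y_2$ share a component; as both are irreducible cubics this forces $Y=Y_2$ and thus $C_1=C_2$.

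The generic finiteness of $\F$ then follows by upper semi-continuity of fiber dimension, since the fiber $\F^{-1}(\F([C_1]))$ is the singleton $\{[C_1]\}$, so the closed locus in $\V$ where $\F$ has positive-dimensional fibers is a proper closed subset. I expect the main obstacle to be showing that the decomposition of the five multiplicity-$3$ lines into \emph{tangent lines at $L$-intersections} versus \emph{tangent cone lines at the node} is uniquely determined: a priori there is a combinatorial ambiguity in the partition, and ruling out the mismatched assignments requires carefully exploiting the interplay between the node location, the tangency conditions on $L$, and the constraint that $Y_2$ be a valid irreducible nodal cubic in $\V$.
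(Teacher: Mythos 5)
Your first phase (classifying $C_2$) follows the paper's proof almost verbatim, and your concluding semicontinuity remark for generic finiteness is also the paper's. The problem is in the rigidity phase, and it is exactly the obstacle you flag at the end: the claim that the five multiplicity-$3$ lines decompose \emph{in the same way} for $Y$ and $Y_2$ into ``three tangents at the $L$-intersection points'' plus ``two tangent-cone lines at the node'' is not proved, and nothing in your write-up closes it. The configuration $\F_{C_1}=\F_{C_2}$ is only a divisor in $\Sym^{24}(({\Ps^2})^\vee)$: it records the five lines with their multiplicities but not their points of tangency, so a priori a tangent-cone line of the node of $Y_2$ could coincide with one of the tangents $L_i$ of $Y$ at $p_i\in L$, and the node of $Y_2$ could be any of the $\binom{5}{2}$ pairwise intersection points of the five lines that avoid $L$. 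Your entire B\'ezout computation ($6$ at the common node, $2$ at each common tangency point on $L$) presupposes the matched partition, so without ruling out the mismatched assignments the argument does not start. Everything else in the rigidity step (the rescaling of quadratic parts, the inequality $I_n(Y,Y_2)\ge 6$, the count $12>9$) is fine \emph{conditional} on that matching.

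It is worth seeing how the paper avoids this issue entirely: it runs B\'ezout in the dual plane rather than in $\Ps^2$. If $Y\ne Y_2$, their dual curves $Y^\vee, Y_2^\vee\subset({\Ps^2})^\vee$ are distinct irreducible quartics. All five multiplicity-$3$ lines correspond to points lying on both duals, and they are smooth points of each (no line in this configuration can be an inflection line of either cubic, and the singular points of the dual of a nodal cubic are exactly the three cusps dual to its inflection lines); the three multiplicity-$1$ lines are common \emph{cusps} of both duals. Hence $Y^\vee\cdot Y_2^\vee\ge 5\cdot 1+3\cdot(2\cdot 2)=17>16$, a contradiction. The point is that in the dual plane the five multiplicity-$3$ lines all play the same role --- smooth common points --- so no partition into ``node'' versus ``$L$-tangency'' lines is ever needed. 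If you want to salvage your primal-plane argument you must either prove the partition is forced (which seems to require a separate, nontrivial analysis of the ten candidate node positions) or switch to the dual picture.
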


\begin{proof} Set $\F:=\F_{C_1}=\F_{C_2}$.
First of all, we will show that $C_2$ is a nodal plane quartic which is the union of an irreducible cubic  with a node and a line.  Indeed, by Remark \ref{ivv},  $\F$ consists of  three lines of type 0 of multiplicity 1, five lines of type 1 of multiplicity 3 and one degenerate line of multiplicity 6. By Proposition \ref{F-conf}(i) $C_2$ does not contain a cusp or a tacnode, i.e. $C_2$ is nodal. Again by Proposition \ref{F-conf}(i), $C_2$ is reducible, otherwise $\F$ would contain an even number of lines of multiplicity 3 or 4. By Proposition \ref{F-conf}(ii),  $C_2$ contains exactly one linear component, hence $C_2$ is a nodal plane quartic which is the union of an irreducible cubic $W$ and a line $M$, and  $W$ has a node, otherwise $\F$ would contain at most  three lines of multiplicity 3.

The linear component both of $C_1$ and of $C_2$ is the reduction of the unique component of multiplicity 6 of $\F$, hence $L=M$.  By contradiction, assume that  $Y\ne W$.  Let  $Y^\vee\subset(\Ps^2)^\vee$ and $W^\vee\subset(\Ps^2)^\vee$ be the dual curves of $Y$ and $W$. Then $Y^\vee\ne W^\vee$.  Since $Y$ and $W$ are irreducible cubics with a node,  $Y^\vee$ and $W^\vee$ have degree 4.   But $Y^\vee$ and $W^\vee$ intersect each other at five smooth points, corresponding to the five components  of multiplicity 3 of $\F$, and at three cusps, corresponding to the three components  of multiplicity 1 of $\F$, contradicting  B\'ezout's theorem.

It follows from the first part of the proof that  
   the cardinality of the fiber of $\F$ over $\F([C_1])$ is one, then by semicontinuity $\F$ is generically finite onto its image.  
 \end{proof}

   For every line $L\subset \Ps^2$ and for every point $p\in L$,  consider the variety $\V_{L,p}$ defined in (\ref{VLp}).   Notice that $\V_{L,p}$ is irreducible,  because it  is the intersection of $\V$ with a linear subspace of $\Ps^{14}$ of codimension 3.   In the proof of the next Theorem,  we will follow the same strategy of~\cite[Theorem 5.2.1]{CS1}.

\begin{Thm}\label{main}
Consider a line $L\subset\Ps^2$ and a point $p\in L$. Consider the morphism $\F\:\V\ra Sym^{24}({\Ps^2}^\vee)$   of Proposition \ref{F-map}. Then $\F|_{\V_{L,p}}$ is injective on a non-empty open subset of $\V_{L,p}$. 
\end{Thm}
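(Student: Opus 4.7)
The strategy, following \cite[Theorem~5.2.1]{CS1}, is to exhibit a specific $[C_0]\in\V_{L,p}$ at which $\F|_{\V_{L,p}}$ is unramified with a singleton fiber, and then conclude that $\F|_{\V_{L,p}}$ is birational onto its image, hence injective on a non-empty open subset.

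First I would note that $\V_{L,p}$ is irreducible of dimension $11$: it is the closure in $\V$ of an open subset of the intersection of $\V$ with the codimension-$3$ linear subspace of $\Ps^{14}$ cut out by the conditions that $L$ meets $C$ with multiplicity at least $3$ at $p$. For the special point, I would take $[C_0]=[Y\cup L]$, where $Y$ is a general irreducible nodal plane cubic meeting $L$ transversally at three non-inflectionary points of $Y$, and $p\in L\setminus Y$. By Lemma~\ref{Tang}(iii), $[C_0]\in\V_{L,p}$ and
$$
T_{[C_0]}\V_{L,p}\simeq H^0(C_0,\O_{C_0}(4)\otimes\O_{C_0}(-3p));
$$
Riemann-Roch on the arithmetic-genus-$3$ curve $C_0$ shows this space has dimension $11$, so $\V_{L,p}$ is smooth of dimension $11$ at $[C_0]$.

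By Lemma~\ref{nodal-cubic}, the set-theoretic fiber $(\F|_{\V_{L,p}})^{-1}(\F([C_0]))$ equals $\{[C_0]\}$; in particular $\F|_{\V_{L,p}}$ is generically finite onto its image, and its generic degree is bounded above by the length of the scheme-theoretic fiber over $\F([C_0])$. To finish, I would show that the differential
$$
d\F_{[C_0]}\col T_{[C_0]}\V_{L,p}\lra T_{\F([C_0])}\Sym(({\Ps^2})^\vee)
$$
is injective. This forces the scheme-theoretic fiber over $\F([C_0])$ to reduce to a single reduced point, so the generic degree of $\F|_{\V_{L,p}}$ onto its image is $1$, whence $\F|_{\V_{L,p}}$ is birational and, in particular, injective on a non-empty open subset of $\V_{L,p}$.

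The main obstacle is therefore the injectivity of $d\F_{[C_0]}$. By Remark~\ref{ivv}, the divisor $\F([C_0])$ on $({\Ps^2})^\vee$ decomposes as $6L$ plus three inflection lines of type~$0$ of $Y$ of multiplicity $1$ plus five lines of type~$1$ of multiplicity $3$ (two from the tangent cone at the node of $Y$ and three tangent to $Y$ at the points of $Y\cap L$). A non-zero tangent vector $\sigma\in H^0(C_0,\O_{C_0}(4)\otimes\O_{C_0}(-3p))$ induces a first-order deformation of each of these lines, and I would split the analysis along the restrictions $\sigma|_Y$ and $\sigma|_L$: the genericity of $Y$ ensures that a non-zero $\sigma|_Y$ moves at least one of the six inflection lines attached to $Y$ (this is the infinitesimal counterpart of the uniqueness statement in Lemma~\ref{nodal-cubic}), while if $\sigma|_Y=0$ then a non-zero $\sigma|_L$ perturbs the multiplicity-$6$ component $L$, as can be verified by a local computation in the style of the proof of Proposition~\ref{F-conf}. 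In either case $d\F_{[C_0]}(\sigma)\neq 0$, completing the plan.
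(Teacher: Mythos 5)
Your choice of the special point $[C_0]=[Y\cup L]$ and the overall plan (singleton fiber by Lemma~\ref{nodal-cubic} plus injectivity of the differential at $[C_0]$) reproduces Case~1 of the paper's argument, but the inference you draw from it --- ``the scheme-theoretic fiber over $\F([C_0])$ is a single reduced point, hence the generic degree of $\F|_{\V_{L,p}}$ onto its image is $1$'' --- is not valid here, because $\F|_{\V_{L,p}}$ is not proper. The variety $\V$ is only the open locus of $\Ps^{14}$ of reduced GIT-semistable quartics with finite stabilizer, so a one-parameter family $[W_b]$ of smooth quartics with $\F_{W_b}=\F_{C_b}$ and $W_b\neq C_b$ need not have a limit $[W_0]$ inside $\V$ at all: the extra sheets of the fiber can escape to the boundary instead of colliding with $[C_0]$, and then neither the singleton fiber over $\F([C_0])$ nor the unramifiedness at $[C_0]$ gives any control. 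This is precisely what the paper's Case~2 handles: one replaces $\W\to B$ by a GIT-semistable family $\Z\to B$ via the semistable replacement property, notes that $\F_{Z_0}$ lies in the closure of the $PGL(3)$-orbit of $\F_{C_0}$, invokes Proposition~\ref{F-stab} to see that $\F_{C_0}$ is GIT-stable so that a boundary limit $\F_{Z_0}$ would be GIT-unstable and $Z_0$ would have two cusps or a tacnode, and finally rules these out by comparing multiplicities of components (two lines of multiplicity $8$, resp.\ at least six lines of multiplicity at most $2$, are incompatible with the profile $1,1,1,3,3,3,3,3,6$ of $\F_{C_0}$ from Remark~\ref{ivv}). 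Without some argument of this kind your proof is incomplete.

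A secondary point: your justification of the injectivity of $d\F_{[C_0]}$ (``genericity of $Y$ ensures that a non-zero $\sigma|_Y$ moves at least one of the six inflection lines'') is an assertion, not a proof, and genericity is not actually needed. The paper makes this step concrete by introducing the locus $\widehat{\V}\subset\V_{L,p}$ of curves having the seven specified lines $L_1,\dots,L_7$ as inflection lines, and applying Lemma~\ref{Tang} at each line to get
\[
T_{[C_0]}\widehat{\V}\subseteq H^0(\Ps^1,\O_{\Ps^1}(-3))\oplus H^0(\Ps^1,\O_{\Ps^1}(-2))=0 ,
\]
the two summands coming from the normalization $Y^\nu\simeq\Ps^1$ of the nodal cubic and from $L$. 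You should carry out this (or an equivalent) cohomology vanishing rather than appeal to genericity; but even with that repaired, the missing non-properness analysis above is the essential gap.
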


\begin{proof}
 Let $C_0$ be a nodal plane quartic curve  which is the union of an irreducible cubic $Y$ with a node and  $L$, with $p\in C_0^{sm}\cap L$, and such that $Y\cap L$ contains no inflectionary points  of $Y$.   Then 
$[C_0]\in \V_{L,p}$, by Lemma \ref{Tang}(iii). Recall the configuration $\F_{C_0}$ in Remark \ref{ivv}.
Set   $\{p_1,p_2,p_3\}:=Y\cap L$ and let  $L_i\subset\F^{red}_{C_0}$ be  the tangent to $Y$  at $p_i$, for $i\in\{1,2,3\}$.  Let $p_4$ be the node of $Y$ and $L_4\subset \F^{red}_{C_0}$ be  one of the two lines of the tangent cone of $C_0$ at  $p_4$.  Let $L_5,L_6,L_7$ be the inflection lines  of type 0 of $C_0$ and $p_i$ be the non-reduced point of $L_i\cap C_0$, for $i\in\{5,6,7\}$.

Let   $\C\ra B$ be a smoothing   of $C_0$ to general curves of $\V_{L,p}$. By contradiction,    assume  the existence of a one parameter family of curves $\W\ra B$ such that $C_b\ne W_b\in \V_{L,p}$,  for $b\in B\setminus \{0\}$, and $\F_{C_b}=\F_{W_b}$, for $b\in B$.  If $b\in B\setminus\{0\}$, then Proposition \ref{F-conf}(iii) implies that  the maximum multiplicity of a component of 
$\F_{C_b}$  is 2, and hence that   $W_b$ is smooth, because $\F_{C_b}=\F_{W_b}$. We have two cases.

\smallskip

\emph{Case 1.}  
Assume that  $[W_0]\in \V$.  Since   $\F_{C_0}=\F_{W_0}$,  by Lemma \ref{nodal-cubic}  we have $C_0=W_0$.  We will show that   $\F|_{\V_{L,p}}$ is an immersion at $C_0$, implying that $C_b=W_b$, for $b\in B\setminus\{0\}$, which is a contradiction.  Let  $\nu\: C^{\nu}_0 \ra C_0$ and $\nu_Y\:Y^{\nu}\ra Y$ be respectively the normalizations of  $C_0$ and $Y$, where $C^{\nu}_0=Y^{\nu}\cup L$.  Set $\{p_{i,1},p_{i,2}\}:=\nu^{-1}(p_i)$, for $i\in\{1,2,3,4\}$, where $p_{i,1}\in Y^{\nu}$, for $i\in\{1,2,3\}$, and let 
$\widehat{\V}$ be the locus of $\V_{L,p}$ parametrizing curves with $L_1,\dots, L_7$ as inflection lines. 
 Then $[C_0]\in \widehat{\V}$ and, up to switching $p_{4,1}$ and $p_{4,2}$, by Lemma~\ref{Tang} we have
\[
T_{[C_0]} \widehat{\V}\subseteq H^0(Y^{\nu}, \L_1)\oplus H^0(L, \L_2),
\]
where 
\[
\L_1:= \nu_Y^*\O_Y(4)\otimes \O_{Y^\nu}\left(-2\left(\sum_{i=1}^4 p_{i,1}\right)-p_{4,2}-2p_5-2p_6-2p_7\right)\simeq\O_{\Ps^1}(-3)
\]
and 
\[
\L_2:=\O_L(4)\otimes \O_L(-p_{1,2}-p_{2,2}-p_{3,2}-3p)\simeq\O_{\Ps^1}(-2).
\]
In this way,  $T_{[C_0]}\widehat{\V} \subseteq H^0(\Ps^1,\O_{\Ps^1}(-3))\oplus H^0(\Ps^1,\O_{\Ps^1}(-2)) =0$. Denoting by $F$ the fiber over the image of $[C_0]$ under the morphism $\F|_{\V_{L,p}}$, that is 
$$F := (\F|_{\V_{L,p}})^{-1}(\F|_{\V_{L,p}}([C_0])),$$ we have the inclusion of tangent spaces 
\[
T_{[C_0]} F \subseteq T_{[C_0]}\widehat{\V}=0,
\]
hence $\F|_{\V_{L,p}}$ is an immersion at $[C_0]$.

\smallskip

\emph{Case 2}. 
Assume that   $[W_0]\notin \V$.  If $W_0$ is GIT-semistable, then either it has an infinite stabilizer, or it is non-reduced, and hence it is a double conic. In any case, either $W_0$ is GIT-unstable, or it has an infinite stabilizer.  By the GIT-semistable replacement property (see \cite[Section 2.1]{CS1}), up to a finite base change totally ramified over $0\in B$,  we can assume that  there are a family $\Z\ra B$ of curves of $\V$ and a morphism $\rho\: B\setminus\{0\}\ra PGL(3)$ such that $Z_b=W_b^{\rho(b)}$ for every $b\in B\setminus\{0\}$. In particular,  $\F_{Z_b}=(\F_{W_b})^{\rho(b)}$, for  $b\in B\setminus\{0\}$.  Recall that  $\F_{C_0}=\F_{W_0}$,  hence $\F_{C_0}$ and  $\F_{Z_0}$ are limits of $PGL(3)$-conjugate families of configuration of lines 
of $\Sym({\Ps^2}^\vee)$. In this way, if  $Orb_{\F_{C_0}}$ is the $PGL(3)$-orbit of $\F_{C_0}$, and  $\ol{Orb}_{\F_{C_0}}$ is its closure in 
 $\Sym({\Ps^2}^\vee)$, then  $\F_{Z_0}\in \ol{Orb}_{\F_{C_0}}$.

\smallskip
 If   $\F_{Z_0}\in Orb_{\F_{C_0}}$, then $\F_{Z_0}=(\F_{C_0})^g=\F_{C_0^g}$, for some $g\in PGL(3)$,  
 and hence $Z_0=C_0^g$, by Lemma \ref{nodal-cubic}. In this way,  we can assume that a family which is a semistable replacement of $\W\ra B$ has $C_0$ as central fiber
and, after acting with $PGL(3)$, we can argue as in Case 1, obtaining a contradiction.

\smallskip 
If $\F_{Z_0}\in \ol{Orb}_{\F_{C_0}}-Orb_{\F_{C_0}}$, then $\F_{Z_0}$ is GIT-unstable, because $\F_{C_0}$ is GIT-stable, by Proposition~\ref{F-stab}(ii). Thus, $Z_0$ contains  either two cusps or one tacnode,  by  Proposition~\ref{F-stab}(i).
 Notice that $\F_{Z_0}$ is degeneration of configurations of lines conjugate to $\F_{C_0}$. 
 
If $Z_0$  contains two cusps, then by Proposition \ref{F-conf}(i) there are  two distinct component $M_1,M_2\subset \F_{Z_0}$ such that $\mu_{M_1}\F_{Z_0}=\mu_{M_2}\F_{Z_0}=8$. Then both $M_1$ and $M_2$ are degenerations of a set of inflection lines  whose multiplicities sum up to 8.  This is not possible because, 
by Remark \ref{ivv}, $\F_{C_0}$ consists of  three lines of multiplicity 1,  five  lines of multiplicity 3 and one line of multiplicity 6.  

If $Z_0$ contains one tacnode, then arguing as in \cite[Proposition 5.1.1, page 241]{CS1},  we can assume  that  $Z_0$  is either irreducible with $Z_0^{sing}$ consisting exactly of one tacnode, or $Z_0$ is the union of a line and a smooth irreducible cubic that  are tangent at a non-inflectionary point of the cubic.   By Remark \ref{ivv}, $Z_0$ has at least six inflection lines of type 0 each  one of which has  multiplicity at most 2.  This is not possible, because $\F_{C_0}$ contains only three lines of multiplicity at most two.
  \end{proof}

\section{Inflection lines of plane cubics}

Let $C \subset \mathbb{P}^2$ be a smooth plane cubic over a field $k$ and let $T_C \subset (\mathbb{P}^2)^\vee$ be the locus of inflection lines to the cubic $C$.  The purpose of this section is to show that the cubic $C$ can be reconstructed from the subset $T_C \subset (\mathbb{P}^2)^\vee$ if the characteristic of $k$ is different from three (Theorem~\ref{n3}).  Note that we need not assume that the field $k$ is algebraically closed.

Inflection points to a smooth plane cubic correspond to cubic roots of the class of a line and are therefore a torsor under the three-torsion subgroup of the Jacobian of the curve.  Thus, in the case of fields of characteristic three, there is little hope of reconstructing a smooth plane cubic from its inflection lines.  More specifically, let $k$ be a field of characteristic three; for $\lambda \in k^\times$ let $C_\lambda$ be the smooth projective cubic 
\begin{equation} \label{hesse}
C_\lambda \colon x^3 + y^3 + z^3 - \lambda xyz = 0 .
\end{equation}
Let $F_\lambda$ be the set of points $\{ [1,-1,0] , [1,0,-1] , [0,1,-1] \}$ of $C_\lambda$.  It is easy to check that $F_\lambda$ is the set of inflection points of the curve $C_\lambda$ and that the inflection lines to $C_\lambda$ are the lines $\{ x = 0 , \,  y = 0 , \, z = 0\}$.  We conclude that the inflection points and the inflection lines for all the cubics $C_\lambda$ coincide, so that in this case it is impossible to reconstruct a smooth plane cubic from the set of inflection lines, even if all the inflection points are given.  For this reason, we assume that the characteristic of the ground field is not three.

The reconstruction method that we follow proceeds essentially by identifying the dual curve to the cubic $C$.  If the characteristic of $k$ is different from two, we show that there is a unique reduced sextic in $(\mathbb{P}^2)^\vee$ having cusps at the nine points in $T_C$.  In the characteristic two case, there are two possibilities.  If the $j$-invariant of the curve $C$ is non-zero, then there is a unique plane cubic in $(\mathbb{P}^2)^\vee$ containing $T_C$ and this cubic is isomorphic to $C$.  If the $j$-invariant of the curve $C$ is zero, then there is a pencil of plane cubics containing $T_C$, but there is a unique cubic in the pencil with vanishing $j$-invariant; this curve is isomorphic to $C$.

To solve the reconstruction problem we study linear systems associated to the anticanonical divisor of the blow up $S_C$ of $(\mathbb{P}^2)^\vee$ along $T_C$.  With the unique exception $({\rm char}(k) , j(C)) = (2,0)$, we show that the anticanonical class on $S_C$ is linearly equivalent to a unique effective divisor and that twice the anticanonical linear system is a base-point free pencil (Corollary~\ref{sepe}).  This implies that the points of $T_C$ are the base locus of a {\em Halphen pencil of index two}.  In the exceptional case the anticanonical linear system itself is a base-point free pencil.

We start by proving a lemma that allows us to compute dimensions of certain special kinds of linear systems on surfaces.

\begin{Lem} \label{ridorido}
Let $X$ be a smooth projective surface over a field and let $N$ be an effective nef divisor on $X$ such that the equality $N^2=0$ holds.  If the divisor $N$ is connected and reduced, then the linear system $|N|$ has dimension at most one, and it is base-point free if it has dimension one.  If the linear system $|N|$ is base-point free, then no element of $|2N|$ is connected and reduced.
\end{Lem}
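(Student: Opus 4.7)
The plan splits into three parts: bound $\dim |N|$ via a short exact sequence, upgrade to base-point-freeness via a fixed-part analysis, and then combine with a multiplication-of-sections argument for part (b).

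For the dimension bound in part (a), I would consider the short exact sequence
\[
0 \to \O_X \to \O_X(N) \to \O_N(N) \to 0 .
\]
Since $N$ is nef with $N^2=0$ and effective, each irreducible component $C_i$ of $N$ satisfies $0 \le C_i \cdot N \le N^2 = 0$, hence $\O_N(N)$ restricts to a degree-zero line bundle on every component. Because $N$ is connected and reduced, any global section of $\O_N(N)$ is, on each component, either identically zero or nowhere vanishing; matching across the intersection points of components then forces a non-zero section to be nowhere vanishing on all of $N$, so $\O_N(N) \simeq \O_N$ and $h^0(\O_N(N)) \le 1$. The long exact sequence then gives $h^0(\O_X(N)) \le 2$, i.e.\ $\dim |N| \le 1$.

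Suppose now $\dim |N| = 1$ and write $|N| = F + |M|$ for the decomposition into fixed and movable parts. Since $N$ is nef, $N \cdot F, N \cdot M \ge 0$, and the sum $N \cdot (F+M)=N^2=0$ forces both to vanish. The movable pencil $|M|$ has no fixed component, so $M^2 \ge 0$ (two general members share no component); combined with $M^2 = -F \cdot M$, this yields $M^2 = F \cdot M = 0$. If $F \ne 0$, write $N=F+M_0$ with $M_0 \in |M|$ non-zero (possible since $\dim |M|=1$): the reducedness of $N$ implies that $F$ and $M_0$ share no component, so connectedness of $N$ forces their supports to meet, contradicting $F \cdot M_0 = 0$. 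Hence $F = 0$; since $|N|$ has no fixed component and $N^2 = 0$, two general members intersect properly with zero intersection number, so they are disjoint, and $|N|$ is base-point-free.

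For part (b), pick a basis $s_0, s_1$ of $H^0(\O_X(N))$ and observe that the three products $s_0^2, s_0 s_1, s_1^2 \in H^0(\O_X(2N))$ are linearly independent: any relation would make the rational function $s_0/s_1$ a root of a fixed quadratic polynomial with coefficients in the base field, forcing it to be constant on the integral surface $X$, contrary to the linear independence of $s_0$ and $s_1$. Hence $h^0(\O_X(2N)) \ge 3$, so $\dim |2N| \ge 2$. If some $D \in |2N|$ were reduced and connected, then $D$ would be a reduced connected nef effective divisor with $D^2=4N^2=0$, and part (a) applied to $D$ would yield $\dim |2N| = \dim |D| \le 1$, a contradiction. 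The main obstacle is the propagation argument underlying $h^0(\O_N(N)) \le 1$, namely transferring the non-vanishing of a section across the intersection points of components of $N$, which uses the connectedness and reducedness hypotheses in an essential way.
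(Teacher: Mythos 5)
Your proof is correct, and for the base‑point‑freeness step and for the assertion about $|2N|$ it follows essentially the paper's argument: the same decomposition $N=F+M$ into fixed and moving parts, the same use of nefness to force $M\cdot M=F\cdot M=0$ and of connectedness plus reducedness to kill $F$, and the same self‑application of the first assertion to a putative connected reduced member of $|2N|$. Where you genuinely diverge is the bound $\dim|N|\le 1$. You obtain it cohomologically: nefness and $N^2=0$ force $\O_X(N)$ to have degree zero on each component of $N$, a non‑zero section of $\O_N(N)$ propagates its non‑vanishing across the intersection points of components (this is exactly where connectedness and reducedness enter), so $\O_N(N)\simeq\O_N$, $h^0(\O_N(N))\le 1$, and the sequence $0\to\O_X\to\O_X(N)\to\O_N(N)\to 0$ gives $h^0(\O_X(N))\le 2$. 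The paper instead first reduces to the base‑point‑free case and then studies the morphism $\varphi\colon X\to |N|^\vee$: its image is a curve $R$, and since $N=\varphi^*H$ is connected and reduced the scheme $H\cap R$ is a single reduced point, so $R$ is a line and $|N|^\vee=R$ is one‑dimensional. Your route has the mild advantage of giving the dimension bound independently of base‑point‑freeness, and you also supply a proof of $\dim|2N|\ge 2$ (independence of $s_0^2,s_0s_1,s_1^2$) that the paper merely asserts. One caveat, shared equally by both arguments: your equality $h^0(\O_N)=1$, like the paper's inference ``connected, reduced and zero‑dimensional, hence of degree one,'' needs the ground field to be algebraically closed or $N$ to be geometrically connected --- over $\mathbb{Q}$, the fibre of $\mathbb{P}^1\times\mathbb{P}^1\to\mathbb{P}^1$ over the closed point $\{x^2=2\}$ is a connected reduced nef divisor with self‑intersection zero moving in a two‑dimensional linear system. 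This is harmless in the paper's applications, where one may extend scalars, but is worth flagging since the lemma is stated over an arbitrary field.
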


\begin{proof}
Assume that $N$ is connected and reduced; we first show that the linear system $|N|$ either has dimension zero or it is base-point free.  Write $N=M+F$, where $F$ is the fixed divisor of $|N|$ and $M$ is a base-component free divisor.  By the assumptions, the equalities 
\[
0 = N^2 = (M+F)^2 = M \cdot (M+F) + F (M+F)
\]
hold; since $M+F$ is nef and $M$ and $F$ are both effective, we deduce that the last two summands in the previous equality are non-negative so that also the equalities $M \cdot (M+F) = F \cdot (M+F) = 0$ hold.  Similarly, the divisor $M$ is nef, because it has no base components, and hence $M \cdot M = M \cdot F = 0$.  Since the divisor $N$ is reduced, the divisors $M$ and $F$ have no components in common, and since $N$ is connected, the equality $M \cdot F = 0$ implies that either $M$ or $F$ vanishes.  If $M=0$, then the linear system $|N|$ has dimension zero and we are done.  If $N=0$, then the linear system $|N|$ is base-component free, and the number of base-points is bounded above by $N^2=0$, so that $|N|$ is base-point free.

Thus we reduce to the case in which $N$ is non-zero and the linear system $|N|$ is base-point free, and we need to show that the dimension of $|N|$ is one.  Let $\varphi \colon X \to |N|^\vee$ be the morphism determined by $N$.  Since $N^2=0$, the image $R$ of $\varphi$ is a curve.  The divisor $N$ is the inverse image of a hyperplane $H$ in $|N|^\vee$ under the morphism $\varphi$, and since by assumption $N$ is connected and reduced, we deduce that the intersection of the hyperplane $H$ with $R$ is also reduced and connected.  Since $R$ is a curve, it follows that $R$ has degree one and it is therefore a line.  Because $R \subset |N|^\vee$ is not contained in any hyperplane, it follows that $R=|N|^\vee$, and we conclude that the dimension of $|N|$ is one, as required.

To prove the last assertion, note that the inequality $\dim |N| \geq 1$ implies the inequality $\dim |2N| \geq 2$, so that the linear system $|2N|$ cannot contain a connected and reduced divisor by the first part of the lemma.
\end{proof}

The following result shows that the set of inflection lines to a smooth plane cubic forms the base point of a Halphen pencil of index two with a unique exception; we  analyze separately the exception.  We chose a quick explicit argument to handle the case of characteristic two.  From a characteristic-free point of view, the case of vanishing $j$-invariant is special because it is the only case in which three inflection lines to a smooth plane cubic are concurrent.  This fact, combined with the inseparability of the Gauss map in characteristic two, accounts for the exception.

\begin{Lem} \label{tu}
Let $k$ be a field of characteristic different from three.  Let $C \subset \mathbb{P}^2$ be a smooth cubic curve over $k$, and let $S_C$ be the blow up of the projective plane $(\mathbb{P}^2)^\vee$ dual to $\mathbb{P}^2$ at the points corresponding to the inflection lines to $C$.  The anticanonical divisor ${-K_{S_C}}$ of $S_C$ is nef and linearly equivalent to an effective divisor; the linear system $|{-2K_{S_C}}|$ is base-point free.  Moreover, extending the ground field if necessary, the linear system $|{-2K_{S_C}}|$ contains a connected and reduced divisor, unless ${\rm char}(k) = 2$ and the $j$-invariant of $C$ vanishes.
\end{Lem}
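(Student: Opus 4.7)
I would work after passing to the algebraic closure of $k$.  Let $H \subset S_C$ be the pullback of a line in $(\Ps^2)^\vee$ and $E_1, \dots, E_9$ the exceptional divisors above the points of $T_C$; then $-K_{S_C} = 3H - \sum_i E_i$ and $K_{S_C}^2 = 0$.  Nefness follows from B\'ezout applied to the dual curve $C^\vee$, which passes through every point of $T_C$: with multiplicity two when $\operatorname{char}(k) \ne 2$ (in which case $C^\vee$ is a sextic with a cusp at each such point) and with multiplicity one when $\operatorname{char}(k) = 2$ (in which case $C^\vee$ is a cubic).  For any non-exceptional irreducible curve $D \subset S_C$, B\'ezout forces the sum $\sum m_i$ of multiplicities of the underlying plane curve at $T_C$ to satisfy $\sum m_i \le 3d$, giving $-K_{S_C} \cdot D \ge 0$.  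For effectivity, rationality of $S_C$ yields $h^2(-nK_{S_C}) = 0$ for $n \ge 1$; Riemann--Roch gives $\chi(-nK_{S_C}) = 1$ for $n \in \{1,2\}$, so $|-K_{S_C}|$ and $|-2K_{S_C}|$ are both non-empty.

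The argument for base-point freeness and the existence of a connected reduced divisor splits by characteristic.  If $\operatorname{char}(k) \ne 2$, the strict transform $\wt{C}^\vee$ of the dual sextic has class $-2K_{S_C}$ and is irreducible (the cusps at $T_C$ are resolved by the blowup); pairing with the non-reduced $2D_1 \in |-2K_{S_C}|$ obtained by doubling any $D_1 \in |-K_{S_C}|$ shows $\dim|-2K_{S_C}| \ge 1$, and Lemma~\ref{ridorido} applied with $N = \wt{C}^\vee$ yields the conclusion.  If $\operatorname{char}(k) = 2$, the strict transform $\wt{C}^\vee$ of the dual cubic is a smooth genus-one curve lying in $|-K_{S_C}|$; via the identification $\wt{C}^\vee \cong C^\vee$, the restriction $-K_{S_C}|_{\wt{C}^\vee} \sim 3H|_{C^\vee} - \sum T_C$ corresponds, under $\Pic^0(C^\vee) \cong C^\vee$, to an element $t_C$.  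A local analysis of the purely inseparable Gauss map $C \to C^\vee$ identifies the nine points $T_C \subset C^\vee$ as the coset of the three-torsion subgroup shifted by $t_C$; the identity that the nine three-torsion points of $C^\vee$ sum to zero then forces $t_C$ to be two-torsion.  The key geometric dichotomy is that $t_C$ vanishes exactly when three inflection lines of $C$ are concurrent, equivalently when $j(C) = 0$ (i.e.\ when $C$ is supersingular).

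In the subcase $j(C) \ne 0$, $t_C$ is non-trivial two-torsion: $-2K_{S_C}|_{\wt{C}^\vee}$ is then trivial, the restriction exact sequence forces $h^0(-2K_{S_C}) = 2$, and any $\Sigma \in |-2K_{S_C}|$ outside the family $\{2D_1\}$ is disjoint from $\wt{C}^\vee$ on $S_C$ (from $\Sigma \cdot \wt{C}^\vee = 0$); B\'ezout with $C^\vee$ together with the uniqueness of the cubic through $T_C$ shows that $\Sigma$ is connected and reduced, and Lemma~\ref{ridorido} with $N = \Sigma$ concludes.  In the subcase $j(C) = 0$, $t_C = 0$ gives $h^0(-K_{S_C}) \ge 2$; a general member of the pencil $|-K_{S_C}|$ is a smooth plane cubic (connected and reduced), and Lemma~\ref{ridorido} with $N = -K_{S_C}$ shows $|-K_{S_C}|$ is a base-point free pencil while no element of $|-2K_{S_C}| = |2N|$ is connected and reduced, with base-point freeness of $|-2K_{S_C}|$ following from that of $|-K_{S_C}|$ by doubling.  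The principal obstacle is the identification of the two-torsion shift $t_C$ in characteristic two, which depends on the specific behaviour of the inseparable Gauss map on the nine inflection points of $C$ and is where the ordinary/supersingular dichotomy enters decisively.
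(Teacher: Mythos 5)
Your treatment of nefness, effectivity, and the case ${\rm char}(k)\neq 2$ is correct and follows the paper's route (the strict transform of the dual sextic is an integral member of $|{-2K_{S_C}}|$, distinct from the doubled anticanonical divisor, and Lemma~\ref{ridorido} finishes). The divergence, and the gap, is entirely in characteristic two. Your strategy there is to read everything off from the degree-zero class $t_C = \O_{C^\vee}(3H)(-\textstyle\sum_{P\in T_C}P) \in \Pic^0(C^\vee)$, claiming (a) that $t_C$ is two-torsion and (b) that $t_C=0$ exactly when $j(C)=0$. Both claims are true, but neither is proved, and they are the entire content of the characteristic-two case: without (a) you do not get $h^0(-2K_{S_C})=2$ when $j\neq 0$, and without (b) you cannot separate the two subcases. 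The one justification you offer for (a) is not valid: writing the Gauss map as a purely inseparable isogeny composed with a translation, the vanishing of the sum of the nine three-torsion points gives only $t_C = -9u$, where $u$ is the displacement of the image of one inflection point of $C$ from an inflection point of $C^\vee$; nothing forces $9u$ to be killed by $2$. (Indeed, if the Gauss map carried inflections to inflections one would get $t_C=0$ always, which contradicts Corollary~\ref{sepe}; so the position of $\gamma(F_0)$ relative to the inflections of $C^\vee$ genuinely has to be computed, as you yourself concede in your closing sentence.) A secondary soft spot: even granting $h^0(-2K_{S_C})=2$ and $h^0(-K_{S_C})=1$, your deduction that a member of the pencil $|{-2K_{S_C}}|$ other than $2\tilde{C}^\vee$ is \emph{reduced} ``by B\'ezout and uniqueness of the cubic'' is only a sketch; in characteristic two a pencil of sextics can easily consist entirely of non-reduced members (e.g.\ a pencil of squares), and ruling out configurations such as a doubled line plus a quartic requires an actual argument.

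The paper avoids all of this by brute force: over $\bar{k}$ every smooth cubic with the nine inflection points in standard position is a member of the Hesse pencil $x^3+y^3+z^3-3\lambda xyz=0$, the nine inflection lines are written down explicitly, and the sextic $(\lambda x^2-yz)(\lambda y^2-xz)(\lambda z^2-xy)$ is exhibited as a connected reduced member of $|{-2K_{S_C}}|$ for $\lambda\neq 0$, with $\lambda=0$ corresponding exactly to $j=0$. If you want to salvage your more conceptual route, the cleanest fix is to prove (a) and (b) by the same explicit computation (which then makes your abstract machinery redundant), or to find an intrinsic identification of $\gamma(F_0)\ominus O_{C^\vee}$ coming from the local structure of the inseparable Gauss map at an inflection point; as written, that identification is named as ``the principal obstacle'' but never overcome.
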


\begin{proof}
Let $T_C \subset (\mathbb{P}^2)^\vee$ denote the set of inflection lines to the cubic $C$, and recall that $T_C$ consists of nine distinct points.  The surface $S_C$ is the blow up of the projective plane $(\mathbb{P}^2)^\vee$ at the nine points in $T_C$.  Since there is always a plane cubic curve containing any nine points, we deduce that the anticanonical divisor of $S_C$ is linearly equivalent to an effective divisor, and that the linear system $|{-2K_{S_C}}|$ contains non-reduced divisors corresponding to twice the divisors in $|{-K_{S_C}}|$.

Let $C^\vee$ denote the strict transform in $S_C$ of the curve dual to $C$.  The curve $C^\vee$ is an irreducible element of the linear system ${-2K_S}$ on $S_C$ and it is reduced if and only if ${\rm char}(k) \neq 2$.  Since the equality $(C^\vee)^2 = 0$ holds, the anticanonical divisor ${-K_{S_C}}$ is nef.  To conclude we show that the linear system $|{-2K_{S_C}}|$ contains a connected and reduced divisor, possibly after an extension of the ground field; the result then follows applying Lemma~\ref{ridorido}.

If the characteristic of the field $k$ is different from two, then the linear system $|{-2K_{S_C}}|$ contains the integral divisor $C^\vee$ and we are done.

Suppose that the $j$-invariant of $C$ is non-zero.  We show, assuming that the ground field is algebraically closed, that there is a connected reduced divisor in the linear system $|{-2K_{S_C}}|$ not containing the reduction $(C^\vee)_{red}$ as a component.  Let 
\[
\mathcal{H} \colon \bigl\{ \lambda_0 (x^3+y^3+z^3) - 3 \lambda_1 xyz = 0 \bigr\}
\]
be the pencil of plane cubics parameterized by $[\lambda_0 , \lambda_1] \in \mathbb{P}^1$; the pencil $\mathcal{H}$ is also knowledge as the {\em Hesse pencil}.  Over an algebraically closed field of characteristic different from three, any isomorphism between plane cubics is induced, up to composition with a translation, by an isomorphism sending an inflection point to an inflection point, and is therefore realized by a projective equivalence.  Thus, over an algebraically closed field of characteristic different from three, every non-singular plane cubic is projectively equivalent to a curve appearing in the Hesse pencil, since the $j$-invariant in the family $\mathcal{H}$ is non-constant.  To prove the result we shall assume that the curve $C$ is the curve 
\[
C_\lambda \colon \bigl\{ x^3+y^3+z^3 - 3 \lambda xyz = 0 \bigr\}
\]
in the pencil $\mathcal{H}$ corresponding to the parameter $[1,\lambda] \in \mathbb{P}^1$.  The inflection points of the curve $C_\lambda$ are the nine points 
\[
[1 , \varepsilon , 0] \quad , \quad [1 , 0 , \varepsilon ] \quad , \quad [0 , 1 , \varepsilon], \quad {\textrm{where }} \varepsilon ^3 + 1 = 0 ,
\]
and the corresponding inflection lines are the lines with equations 
\[
x + \omega y + \lambda \omega^2 z = 0 \quad , \quad x + \lambda \omega^2 y + \omega z = 0 \quad , \quad \lambda \omega^2 x + y + \omega z = 0, \quad {\textrm{where }} \omega ^3   = 1 .
\]
It is therefore clear that the sextic $(\lambda x^2 - yz) (\lambda y^2 - xz) (\lambda z^2 - xy)$ corresponds to an element $\Sigma_\lambda$ of $|{-2K_{S_C}}|$ that is connected and reduced, if $\lambda \neq 0$.  Since the divisor $\Sigma_\lambda$ is a union of conics, it follows that $\Sigma_\lambda$ does not contain $C^\vee$.  To conclude, observe that the $j$-invariant of the curve $C_0$ vanishes.
\end{proof}

\begin{Rem}
The exception mentioned in Lemma~\ref{tu} is necessary.  Let $F \subset \mathbb{P}^2$ be the Fermat cubic curve with equation $x^3+y^3+z^3=0$ over a field of characteristic different from three.  As in the proof of Lemma~\ref{tu}, 
the inflection points of the curve $F$ are the points with coordinates 
\begin{equation} \label{coorte}
[1 , \epsilon , 0] \quad , \quad [1 , 0 , \epsilon] \quad , \quad [0 , 1 , \epsilon], \quad {\textrm{where }} \epsilon ^3+1  = 0 
\end{equation}
and the corresponding inflection lines are the points with coordinates $[1,-\varepsilon^2,0]$ in $(\Ps^2)^\vee$ (up to permutations).  Therefore, the coordinates of the inflection points of the Fermat cubic involve cubic roots of $-1$, while the coordinates of its inflection lines involve cubic roots of $1$: in the case of a field of characteristic two, the configuration of inflection points and the configuration of inflection lines are projectively equivalent!  Thus we see that the set $T$ of the points in~\eqref{coorte} is the base locus of the Hesse pencil $\mathcal{H}$, and we conclude that the anticanonical linear system of the surface $S_F$ obtained by blowing up $\mathbb{P}^2$ along $T$ has dimension at least one.  It follows at once that the dimension of the anticanonical linear system on $S_F$ is one.
\end{Rem}

\begin{Cor} \label{sepe}
Let $k$ be a field of characteristic different from three.  Let $C \subset \mathbb{P}^2$ be a smooth cubic curve over $k$, and let $S_C$ be the blow up of the projective plane $(\mathbb{P}^2)^\vee$ dual to $\mathbb{P}^2$ at the points corresponding to the inflection lines to $C$; if the characteristic of $k$ is two, then assume also that the $j$-invariant of $C$ is non-zero.  The linear system $|{-2K_{S_C}}|$ on $S_C$ is a base-point free pencil and the associated morphism $S_C \to \mathbb{P}^1$ is a rational elliptic surface with a unique multiple fiber corresponding to twice the unique element of $|{-K_{S_C}}|$.
\end{Cor}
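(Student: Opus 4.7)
The plan is to combine Lemma~\ref{tu} with Lemma~\ref{ridorido} to pin down the dimension of $|{-2K_{S_C}}|$, and then extract the elliptic fibration together with its unique multiple fiber from general principles.  Set $N := -2K_{S_C}$.  Since $S_C$ is the blow-up of $(\mathbb{P}^2)^\vee$ at the nine points of $T_C$, one has $K_{S_C}^2 = 9 - 9 = 0$, and hence $N^2 = 0$.  Lemma~\ref{tu} yields that $N$ is nef, that $|N|$ is base-point free, and that (after possibly extending the ground field) $|N|$ contains a connected and reduced member $D$.  Applying Lemma~\ref{ridorido} to $D$ forces $\dim |N| \le 1$, and since the dimension of a linear system is invariant under flat base change this bound holds over $k$ itself.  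On the other hand a non-trivial base-point free linear system on a projective surface has positive dimension, so $|N|$ is in fact a base-point free pencil, defining a morphism $\pi \colon S_C \to \mathbb{P}^1$.

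To identify $\pi$ as a rational elliptic fibration, apply adjunction to a general $F \in |N|$:
\[
2 p_a(F) - 2 \;=\; F \cdot (F + K_{S_C}) \;=\; (-2K_{S_C}) \cdot (-K_{S_C}) \;=\; 2 K_{S_C}^2 \;=\; 0,
\]
so the general (geometric) fiber is a connected integral curve of arithmetic genus one.  Rationality of $S_C$ is automatic from its construction as a blow-up of a projective plane.  Moreover $\pi$ is relatively minimal: by nefness of $-K_{S_C}$, any $(-1)$-curve $E \subset S_C$ satisfies $E \cdot F = 2 E \cdot (-K_{S_C}) = 2 > 0$, so $E$ is a multisection of $\pi$ and cannot be a fiber component.

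Finally, to describe the multiple fiber I first show $h^0(-K_{S_C}) = 1$.  If $s_0, s_1 \in H^0(-K_{S_C})$ were $k$-linearly independent, then since $S_C$ is integral the ratio $s_1 / s_0$ would be a non-constant rational function, so $s_0^2, s_0 s_1, s_1^2$ would be linearly independent sections of $\O(-2K_{S_C})$, contradicting $\dim |N| = 1$.  Hence $|-K_{S_C}|$ contains a unique effective divisor $D_0$, and $2D_0 \in |N|$ is then a fiber of $\pi$, which is a multiple fiber of multiplicity two.  For its uniqueness I invoke the canonical bundle formula for the relatively minimal rational elliptic surface $\pi$, namely $K_{S_C} \sim -F + \sum (m_i - 1) F_i$ where $F$ is a general fiber and $m_i F_i$ are the multiple fibers: substituting $K_{S_C} \sim -D_0$ and $F \sim 2D_0$ yields $\sum (m_i - 1) F_i \sim D_0$, so that $\sum (m_i-1) F_i$ is an effective divisor in $|-K_{S_C}|$ and by uniqueness equals $D_0$, forcing a single multiple fiber equal to $2D_0$.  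The main subtlety I anticipate lies in this last step, which rests on first checking the relative minimality of $\pi$ and then exploiting the uniqueness of $D_0$ in $|-K_{S_C}|$; the rest of the argument is a fairly direct application of the two key lemmas.
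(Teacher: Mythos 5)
Your argument is correct and follows essentially the paper's route: Lemma~\ref{tu} combined with Lemma~\ref{ridorido} pins down $\dim|{-2K_{S_C}}|=1$ and $\dim|{-K_{S_C}}|=0$ (your squaring trick for $h^0(-K_{S_C})=1$ and your ``base-point free of dimension zero is impossible for a nonzero effective divisor'' observation are harmless variants of the paper's use of the two parts of Lemma~\ref{ridorido}), after which the paper simply writes ``the result follows'', so your extra detail on the fibration and the multiple fiber is an elaboration rather than a different method. The one point to watch is that the canonical bundle formula $K_{S_C}\sim -F+\sum(m_i-1)F_i$ presupposes that the multiple fibers are tame, which can fail in characteristic two (wild fibers); the uniqueness of the multiple fiber is nevertheless easy to salvage, for instance because $\Pic(S_C)$ is torsion-free and generated by the line class and the exceptional classes, so a multiple fiber $mG\sim 2D_0\sim -2K_{S_C}$ forces $m=2$ and $G\sim D_0$, whence $G=D_0$ by the uniqueness of the effective anticanonical divisor (also, the intersection of a $(-1)$-curve with $-K_{S_C}$ equals one by adjunction, not by nefness, but your computation is correct).
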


\begin{proof}
By Lemma~\ref{tu}, the linear system $|{-2K_{S_C}}|$ is base-point free and contains a connected and reduced divisor $R$.  Applying Lemma~\ref{ridorido} with $X=S_C$ and $N=R$, we deduce that the dimension of the linear system $|{-2K_{S_C}}|$ is one and that the dimension of the linear system $|{-K_{S_C}}|$ is zero.  The result follows.
\end{proof}

\begin{Thm} \label{n23}
Let $C \subset \mathbb{P}^2$ be a smooth plane cubic curve over a field $k$ of characteristic relatively prime to six and let $T_C \subset (\mathbb{P}^2)^\vee$ be the set of inflection lines of $C$.  If a plane sextic $(\mathbb{P}^2)^\vee$ is reduced, singular at all the points in $T_C$ and has a cusp at one of the points in $T_C$, then it is the dual curve $C^\vee$ of $C$ and in particular all of its singular points are cusps.
\end{Thm}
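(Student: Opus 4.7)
The plan is to identify the strict transform of $D$ inside the blow up $S_C$ of $(\mathbb{P}^2)^\vee$ at the nine points of $T_C$, and show that it coincides with the strict transform $\widetilde{C^\vee}$ of the dual cubic. Let $H$ denote the pullback to $S_C$ of a line in $(\mathbb{P}^2)^\vee$ and let $E_1,\dots,E_9$ denote the exceptional divisors. Since $D$ is a reduced plane sextic it has arithmetic genus $10$, and the delta-invariant inequality $\delta_p \ge \binom{m_p}{2}$ combined with $\sum_p \delta_p \le 10$ and the hypothesis $m_p \ge 2$ at the nine points of $T_C$ forces $m_p = 2$ at each such point. Consequently the strict transform $\widetilde D$ has class $6H - 2\sum E_i = -2K_{S_C}$, so $\widetilde D$ lies in the base-point free pencil $|{-2K_{S_C}}|$ provided by Corollary~\ref{sepe}.

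Let $\pi\colon S_C \to \mathbb{P}^1$ be the elliptic fibration defined by this pencil, whose unique multiple fiber is $2E$ for the unique effective divisor $E \in |{-K_{S_C}}|$. I would then restrict $\pi$ to the exceptional divisor $E_i \cong \mathbb{P}^1$ lying over the cuspidal point $p_i$: since ${-2K_{S_C}} \cdot E_i = 2$, the map $\pi|_{E_i}$ is a double cover of $\mathbb{P}^1$, and by Riemann--Hurwitz its ramification divisor has degree two; hence at most two fibers of $\pi$ are tangent to $E_i$, in the sense of meeting it in a single point with multiplicity two. Two such tangent fibers are immediate: the multiple fiber $2E$, because $E \cdot E_i = 1$ implies $2E$ meets $E_i$ in a double point, and $\widetilde{C^\vee}$, because $C^\vee$ is cuspidal at $p_i$ and hence its strict transform is tangent to $E_i$ at the cusp direction. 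Since ${\rm char}(k) \neq 2$ the sextic $C^\vee$ is reduced, so $\widetilde{C^\vee}$ is a reduced member of the pencil, distinct from the non-reduced $2E$; these are therefore the only two fibers of $\pi$ tangent to $E_i$.

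The cuspidal hypothesis on $D$ at $p_i$ forces $\widetilde D$ to be tangent to $E_i$, so $\widetilde D \in \{2E, \widetilde{C^\vee}\}$. Since $D$ is reduced and ${\rm char}(k) \neq 2$, the strict transform $\widetilde D$ is reduced, and the only possibility is $\widetilde D = \widetilde{C^\vee}$. Contracting the exceptional divisors yields $D = C^\vee$, and because $C^\vee$ is cuspidal at every point of $T_C$ every singular point of $D$ is a cusp.

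I expect the main obstacle to be correctly identifying the two tangent fibers of $\pi|_{E_i}$ and verifying that they are distinct: this is precisely where the hypothesis ${\rm char}(k) \neq 2$ becomes essential, since otherwise $\widetilde{C^\vee}$ would collapse into the non-reduced fiber $2E$ (the dual cubic itself being a doubled curve in characteristic two) and the uniqueness argument would break down. A secondary technical point is the multiplicity reduction in the first paragraph when $D$ is allowed to be reducible, but the delta bound applies uniformly across components and the same conclusion holds.
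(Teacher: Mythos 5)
Your argument is essentially the paper's proof: place the strict transform $\widetilde D$ in the base-point-free pencil $|{-2K_{S_C}}|$ of Corollary~\ref{sepe}, restrict the resulting elliptic fibration to the exceptional curve over the cuspidal point, note that this restriction has degree two and hence exactly two ramification points, and observe that these are already accounted for by the multiple fiber and by $\widetilde{C^\vee}$, so a reduced fiber meeting that exceptional curve non-reducedly must be $\widetilde{C^\vee}$. The one step you add that the paper leaves implicit --- forcing multiplicity exactly $2$ at each of the nine points --- is justified incorrectly: the bound $\sum_p \delta_p \le 10$ holds for an \emph{irreducible} reduced sextic, but for a reduced sextic with $r$ components one only gets $\sum_p \delta_p \le 10 + r - 1$ (six general lines have $15$ nodes), so your closing remark that ``the delta bound applies uniformly across components'' is not true. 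The conclusion you need is still correct and follows more cleanly from the fibration itself: if the multiplicity at some point of $T_C$ exceeded $2$, the effective divisor $\widetilde D + \sum_i (m_i-2)E_i \in |{-2K_{S_C}}|$ would be a fiber of the elliptic fibration containing an exceptional curve $E_i$ as a component, contradicting $(-2K_{S_C})\cdot E_i = 2 \neq 0$.
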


\begin{proof}
As before, denote by $S_C$ the blow up of $(\mathbb{P}^2)^\vee$ at the points of $T_C$, so that $S_C$ is a smooth projective rational surface.  By Corollary~\ref{sepe}, the linear system $|{-2K_{S_C}}|$ induces a morphism $\varphi \colon S_C \to \mathbb{P}^1$ exhibiting $S_C$ as a rational elliptic surface with a multiple fiber $B$.  One of the fibers of $\varphi$ corresponds to the sextic curve $C^\vee \subset (\mathbb{P}^2)^\vee$ dual to the curve $C$: we denote this fiber of $\varphi$ by $\overline{C}^\vee$.

Let $f \in T_C$ be a point corresponding to an inflection line to $C$.  Denote by $E_f$ the exceptional curve of $S_C$ lying above the point $f$, so that $(E_f)^2 = K_{S_C} \cdot E_f = -1$.  A reduced curve $D$ in $|{-2K_{S_C}}|$ intersects the exceptional curve $E_f$ at a subscheme of length two that is non-reduced exactly when the plane sextic corresponding to $D$ has a cusp at $f$.  Moreover, the intersection of $E_f$ with a fiber of $\varphi$ is non-reduced precisely when the morphism $\varphi |_{E_f}$ ramifies.  The restriction of the morphism $\varphi$ to the curve $E_f$ has degree $(-2K_{S_C}) \cdot E_f = 2$, and hence it has exactly two ramification points.  One of the ramification points is $E_f \cap B$, since the intersection of $E_f$ with the multiple fiber is non-reduced.  We also know that the morphism $\varphi |_{E_f}$ ramifies at the point $E_f \cap \overline{C}^\vee$, since the sextic corresponding to $\overline{C}^\vee$ is the dual of $C$ and hence it has a cusp at $f$.  We therefore see that the two ramification points of the morphism $\varphi |_{E_f}$ are one in the multiple fiber $2B$ and the other in the fiber $\overline{C}^\vee$.  Since this is true for all points $f \in T_C$, we conclude that the only reduced plane sextic having a singular point at each point of $T_C$ one of which is a cusp is the sextic $C^\vee$, and we are done.
\end{proof}

\begin{Thm}[Reconstruction Theorem] \label{n3}
Let $C \subset \mathbb{P}^2$ be a smooth plane cubic curve over a field $k$ of characteristic different from three and let $T_C \subset (\mathbb{P}^2)^\vee$ be the set of inflection lines of $C$.  There is a unique (geometrically) integral curve $C' \subset (\mathbb{P}^2)^\vee$ such that 
\begin{itemize}
\item
if ${\rm char}(k) \neq 2$, then $C'$ is a sextic with cusps at the points of $T_C$;
\item
if ${\rm char}(k) =2$ and $j(C) \neq 0$, then $C'$ is a cubic containing $T_C$;
\item
if ${\rm char}(k) =2$ and $j(C) = 0$, then $C'$ is a cubic containing $T_C$, with vanishing $j$-invariant.
\end{itemize}
Moreover, the space of cubics in $(\mathbb{P}^2)^\vee$ containing $T_C$ has dimension one if and only if $({\rm char}(k),j(C)) = (2,0)$.  In all cases, the curve $C'$ described above is the dual of the curve $C$.
\end{Thm}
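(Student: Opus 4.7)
The plan is to identify $C'$ with the dual curve $C^\vee$ of $C$ in each of the three cases, by working on the blow-up $\pi\colon S_C \to (\Ps^2)^\vee$ along $T_C$. Under the standard identification $-K_{S_C} = \pi^*\O(3) - E_1 - \cdots - E_9$, where $E_1,\ldots,E_9$ are the exceptional divisors over the nine points of $T_C$, the effective members of $|{-K_{S_C}}|$ correspond bijectively to plane cubics in $(\Ps^2)^\vee$ containing $T_C$, while the effective members of $|{-2K_{S_C}}|$ correspond to plane sextics of multiplicity at least two at each point of $T_C$. Among the latter, the geometrically integral sextics cuspidal at every point of $T_C$ are exactly the integral members of $|{-2K_{S_C}}|$ meeting each $E_i$ in a length-two, non-reduced subscheme, which is precisely the setup exploited in Theorem~\ref{n23}.

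With this dictionary in hand, the dimension assertion in the theorem follows at once from Corollary~\ref{sepe} together with the Remark after Lemma~\ref{tu}: outside the exceptional pair $(\mathrm{char}(k),j(C))=(2,0)$ the system $|{-K_{S_C}}|$ has dimension zero by Corollary~\ref{sepe}, whereas in the exceptional case the Remark exhibits $|{-K_{S_C}}|$ as a pencil and Lemma~\ref{ridorido} pins the dimension down to exactly one.

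When $\mathrm{char}(k)\neq 2$, existence is supplied by classical projective duality, which realizes $C^\vee$ as an integral plane sextic with cusps at the nine inflection lines, and uniqueness follows immediately from Theorem~\ref{n23}: any integral (hence reduced) sextic with cusps at all nine points of $T_C$ satisfies its hypotheses. When $\mathrm{char}(k)=2$ and $j(C)\neq 0$, the Gauss map of $C$ is purely inseparable of degree two, so $C^\vee$ has degree $3$ and visibly contains $T_C$; uniqueness is then forced by the dimension computation above, which makes $C^\vee$ the unique effective element of $|{-K_{S_C}}|$.

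The exceptional case $(\mathrm{char}(k),j(C))=(2,0)$ is where the main work lies. Here $|{-K_{S_C}}|$ is a base-point free pencil of cubics through $T_C$; the dual cubic $C^\vee$ lies in this pencil and satisfies $j(C^\vee)=0$, since in characteristic two the Gauss map factors through the relative Frobenius and the Frobenius twist of a $j=0$ elliptic curve still has $j=0$. The nontrivial step, which I expect to be the main obstacle, is to single out $C^\vee$ among the pencil members by the condition of vanishing $j$-invariant. I would attack this by identifying $|{-K_{S_C}}|$ with the Hesse-type pencil described in the Remark after Lemma~\ref{tu}, and then analyzing the $j$-invariant as a rational function of the pencil parameter in a suitable Weierstrass model, showing that the value $0$ is attained on exactly one geometrically integral member, which must then coincide with $C^\vee$.
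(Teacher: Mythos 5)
Your handling of the cases ${\rm char}(k) \neq 2$ and ${\rm char}(k)=2$, $j(C)\neq 0$ is correct and is essentially the paper's argument: projective duality plus Theorem~\ref{n23} for the first, pure inseparability of the Gauss map plus the fact that Corollary~\ref{sepe} forces $\dim|{-K_{S_C}}|=0$ for the second; the dimension statement also comes out as you say, from Corollary~\ref{sepe} together with Lemma~\ref{ridorido} and the reduction (over $\overline{k}$, by projective equivalence of $j=0$ cubics) to the Fermat configuration of the Remark following Lemma~\ref{tu}.

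The genuine gap is exactly where you flag it, in the case $({\rm char}(k),j(C))=(2,0)$: you assert, but do not prove, that $j=0$ is attained at only one member of the pencil $\Lambda$ of cubics through $T_C$. Your proposed route (normalize to the Hesse pencil and compute $j$ as a function of the pencil parameter in characteristic two) is plausible but is not carried out, and it is precisely the content of the claim, so as written the uniqueness in the third bullet is unproved. The paper closes this step without computation, as follows. Since $\Lambda$ contains smooth members, its $j$-invariant is a non-constant morphism $\mathbb{P}^1 \to \mathbb{P}^1$, of degree twelve for a pencil of plane cubics. On the other hand, because ${\rm char}(k)=2$ and $j(C^\vee)=0$, the automorphism group of $C^\vee$ over $\overline{k}$ has order $24$, and hence the fiber of the $j$-map over $0$ has multiplicity twelve at the point of $\Lambda$ corresponding to $C^\vee$. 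This multiplicity already exhausts the degree, so $C^\vee$ is the unique member of $\Lambda$ with vanishing $j$-invariant. You would either need to import this automorphism-group/ramification argument or actually perform the Weierstrass-model computation you sketch; one minor further caveat is that your dictionary ``integral member of $|{-2K_{S_C}}|$ meeting each $E_i$ non-reducedly $=$ sextic cuspidal at $T_C$'' is only an implication in one direction (a tacnode also meets $E_i$ non-reducedly), but this is harmless since Theorem~\ref{n23} only needs one cusp and concludes all singularities are cusps.
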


\begin{proof}
If the characteristic of the field $k$ is different from two, then the result is a consequence of Theorem~\ref{n23}.

Suppose now that the characteristic of the field $k$ is two.  In this case, the Gauss map is purely inseparable and the reduced image $C^\vee$ of the Gauss map of the curve $C$ is a cubic in $(\mathbb{P}^2)^\vee$ containing $T_C$.  If the $j$-invariant of the curve $C$ is non-zero, then Corollary~\ref{sepe} implies that there is a unique cubic $C'$ in $(\mathbb{P}^2)^\vee$ containing $T_C$, and hence the cubic $C'$ must be the curve $C^\vee$, as required.

Finally, suppose that the $j$-invariant of the curve $C$ is zero (and ${\rm char}(k) = 2$), so that the linear system $\Lambda$ of cubics in $(\mathbb{P}^2)^\vee$ containing $T_C$ has dimension at least one.  The reduced dual $C^\vee$ of $C$ is an integral cubic in $(\mathbb{P}^2)^\vee$ containing $T_C$; Lemma~\ref{ridorido} shows that the linear system $\Lambda$ has dimension exactly one.  Since $\Lambda$ is a pencil of plane cubics containing smooth fibers, we deduce that the $j$-invariant of the pencil is a morphism of degree twelve.  Since the curve $C$ has $j$-invariant zero and the characteristic of the field $k$ is two, the automorphism group of the curve $C$ has order 24 over any algebraically closed extension of $k$.  In particular, the multiplicity of the fiber of the $j$-invariant corresponding to the curve $C^\vee$ is twelve, and hence there is a unique curve with $j$-invariant zero in the pencil $\Lambda$, namely $C^\vee$.  Thus also in this case, the curve $C'$ coincides with the curve $C^\vee$, and the result follows.
\end{proof}

\section*{Acknowledgments} 
\noindent We want to thank  E. Esteves  for  fundamental suggestions,  and also  A. Abreu, L. Caporaso,  L. Gatto, D. Lehavi,  N. Medeiros,  E. Sernesi for precious discussions and 
 I. Dolgachev for pointing out the reference \cite{W}. We also thank the anonymous referee for his detailed report and for the constructive comments.

\end{document}